\theoremstyle{plain}
\newtheorem{lemma}{Lemma}[section]
\newtheorem{theorem}{Theorem}[section]
\theoremstyle{remark}
\newtheorem{assumption}{Assumption}[section]
\newtheorem{example}{Example}[section]
\title[Weighted reduced rank estimators]{Weighted reduced rank estimators under cointegration rank uncertainty}
\author{Christian Holberg \and Susanne Ditlvesen}
\address{Department of Mathematical Sciences, University of Copenhagen, Universitetsparken 5, 2100
Copenhagen Ø, Denmark}
\email{c.holberg@math.ku.dk}
\email{susanne@math.ku.dk}
\begin{document}

\begin{abstract}
Cointegration analysis was developed for non-stationary linear processes that exhibit stationary relationships between coordinates. Estimation of the cointegration relationships in a multi-dimensional cointegrated process typically proceeds in two steps. First the rank is estimated, then the cointegration matrix is estimated, conditionally on the estimated rank (reduced rank regression). The asymptotics of the estimator is usually derived under the assumption of knowing the true rank. In this paper, we quantify the asymptotic bias and find the asymptotic distributions of the cointegration estimator in case of misspecified rank. Furthermore, we suggest a new class of weighted reduced rank estimators that allow for more flexibility in settings where rank selection is hard. We show empirically that a proper choice of weights can lead to increased predictive performance when there is rank uncertainty. Finally, we illustrate the estimators on empirical EEG data from a psychological experiment on visual processing.
\end{abstract}

\maketitle

\section{Introduction}

\subsection{Motivation}

Consider a $p$-dimensional autoregressive process $Y_t$ of order 1 (AR(1)) defined by a vector error correction model (VECM)  
\begin{equation}
\label{eq: vecm}\Delta Y_t = \Pi Y_{t-1} + Z_t
\end{equation}
where $\Delta Y = Y_t-Y_{t-1} \in \mathbb{R}^p$, $\Pi$ is the $p \times p$ autoregression matrix of fixed coefficients of rank $r \leq p$, and $Z_1, Z_2 \ldots $ are i.i.d. $p$-dimensional random vectors of mean zero. 


In standard low-dimensional problems, the typical procedure to determine $r$ is based on sequential likelihood-ratio tests \citep{johansen1995likelihood}. The test statistics do not follow any standard distributions, the critical values depend on $p$ and they need to be calculated numerically. Currently, critical values are available for dimension $p \leq 11$. This can be overcome by bootstrap methods. However, it is nontrivial to keep control over the type I error and the sequential testing can lead to severe bias, especially when the dimension $p$ of model \eqref{eq: vecm} increases \citep{ostergaard2022, onatski2018alternative}. Once the rank is fixed, a reduced rank regression \citep{anderson2002reduced} is performed assuming that this rank is in fact the true rank. 

In settings where rank estimation is hard or where the rank is fixed a priori, questions arise regarding properties of the estimator. One such question is how to characterize the asymptotic behaviour of reduced rank estimators where the rank is fixed at a value not necessarily equal to the true rank. Furthermore, treating the rank obtained from the sequential testing approach as fixed in the subsequent analysis neglects the added uncertainty. Thus, there is a  need for more flexible estimators that take the uncertainty into account. We suggest a weighted average of reduced rank estimators where ranks are weighted depending on the supporting evidence in the data. All the classical reduced rank estimators are special cases of this more general class of estimators.

\subsection{Literature Review}
For a review of reduced rank estimators with a fixed rank, using different types of penalizations, see \cite{levakova2023}.  

A lot of work has been done on the asymptotic behaviour of reduced rank estimators under the true rank or assuming wrongly full rank \(r=d\) (see, for example, \cite{johansen1988statistical, johansen1995likelihood, anderson2002reduced} for the time series setting and \cite{izenman1975reduced, anderson1999asymptotic} for the i.i.d setting). Less work has been done  under the assumption of a misspecified rank, that is, cases where the rank of the reduced rank estimator is not equal to the true rank of \(\Pi\). Only in the i.i.d. setting has any progress been made~\citep{anderson2002specification}.

Model averaging and weighted estimators for cointegrated VAR processes has also received little attention. See \cite{koop2006bayesian} for a review of Bayesian approaches. \cite{hansen2010averaging} deals with model averaging for one-dimensional processes with potential unit root. \cite{lieb2017inference} is most closely related to our approach, but they only consider one family of weights and their main concern is inference. 

\subsection{Our contribution}
There are three main contributions of the present paper. The first is to determine the asymptotic distribution of the reduced rank estimator of $\Pi$ under misspecified rank $r$. This has important statistical implications since there is no guarantee of determining even closely the true rank from finite sample sizes, especially for large $p$. We show that the cointegration estimator is consistent but has increased variance when the rank is overestimated, compared to a correctly specified rank. If the rank is underestimated, an asymptotic bias is introduced. The bias depends on the sizes of the eigenvalues of a certain eigenvalue problem. The main results are Theorem \ref{thm: over_rank} and \ref{thm: under_rank}. Especially the proof of Theorem \ref{thm: under_rank} is interesting. It relies on the delta method applied to a central limit theorem for a specific covariance matrix (Lemma \ref{lemma: xtil}) requiring us to carry out some novel computations involving matrix derivatives.

Our second contribution is the introduction of a new class of estimators which we call \emph{weighted reduced rank estimators}. We show how the classical reduced rank estimators are special cases of this class and how Theorem \ref{thm: over_rank} and \ref{thm: under_rank} determine the asymptotic behaviour for any particular weight \(w\in[0, 1]^p\). We argue why taking rank uncertainty into account is appropriate and show empirically in a simulation experiment how the predictive capabilities of the weighted reduced rank estimators outperform the classical reduced rank estimator based on pre-selected rank.

Our third contribution is the application of the new estimator on an experimental data set of Electroencephalography (EEG) measurements in a visual response study. We then compare the performance with the fixed rank estimators. In this study, $p=59$ with relatively small sample sizes, which is a typical setting where the rank is not well determined. We show that the smaller the sample size, the better the weighted reduced rank estimators perform compared to the fixed rank estimators, for any fixed rank, measured on mean square prediction error. This is important in many neurobiological studies, where the data dimension is high but sample size is restricted to a small time interval if a response to a stimulus is of interest.

\subsection{Organization}
The paper is organized as follows. In Section \ref{sec: pre} the model and assumptions for cointegration are presented. In Section \ref{sec: asymp} the asymptotic distribution under correctly specified rank is recalled and the main results are presented, namely the asymptotic distributions under misspecified rank. In Section \ref{sec: estim} we introduce the weighted reduced rank estimators. Section \ref{sec: simul} consists of two simulation experiments to verify our asymptotic results and compare the different estimators. Section \ref{sec: eeg} compares different weighted reduced rank estimators on a dataset of EEG signals and Section \ref{sec: con} concludes. All the proofs are presented in Section \ref{sec: proofs}. In the Appendix some auxiliary results are given. We show how the framework extends to processes of higher lags and give some further details regarding the simulations.  

\subsection{Notation}
$I_k$ denotes the $k$-dimensional identity matrix. Transposition is denoted by $^T$. Convergence in distribution is denoted by $\rightarrow_w$ and convergence in probability by $\rightarrow_p$. The Frobenius norm is denoted by \(||\cdot||_F\). For a matrix \(A\in\mathbb{R}^{n\times m}\) with \(n\ge m\) of full column rank \(m\), we write \(A_{\perp}\) to denote the \(n\times (n-m)\) matrix of full column rank \((n-m)\) such that \(\text{span}(A)^{\perp}=\text{span}(A_{\perp})\). If \(A\) is positive definite we write \(A^{\frac{1}{2}}\) for the unique positive definite matrix satisfying \(A^{\frac{1}{2}}A^{\frac{1}{2}} = A\). The vectorization operator is written as \(\text{vec}\).

\section{Preliminaries}
\label{sec: pre}

Let \(\{Y_t\}_{t=1}^{\infty}\) be defined by \eqref{eq: vecm}. Autoregressive processes of higher order, VAR(d) with $d>1$, are briefly treated in Appendix \ref{sec: multi}. 
Assume that \(Z_1, Z_2, ...\) are i.i.d. of mean zero, with covariance matrix \(\Sigma_Z := \mathbb{E}(Z_t Z_t^T)\), and a bounded fourth moment. Furthermore, assume that the process satisfies the usual cointegration assumption for some  \(0< r \le p\):

\begin{assumption} \label{as: roots}
The polynomial \(z\mapsto |(1 - z)I_p - \Pi z|\) has \(n = p - r\) unit roots and all other roots are outside the unit circle.
\end{assumption}

This assumption implies that the rank of \(\Pi\) is \(p - n = r\). Thus, we can decompose \(\Pi\) into two matrices \(\alpha, \beta \in \mathbb{R}^{p \times r}\) of rank \(r\) such that \(\Pi = \alpha \beta^T\). Let \(\alpha_{\perp}\) and \(\beta_{\perp}\) be orthogonal complements of \(\alpha\) and \(\beta\). This leads to the second condition that is usually assumed when working with cointegrated AR-processes \citep{johansen1995likelihood}.

\begin{assumption} \label{as: orth}
The $n\times n$ matrix \(\alpha_{\perp}^T \beta_{\perp}\) is non-singular.
\end{assumption}

Under these assumptions Granger's representation theorem \citep{englegranger1987} states that \(Y_t\) is integrated of order 1 (\(I(1)\)) and cointegrated of rank \(r\). The cointegration relations are given by \(\beta^T Y_t\). That is, \(Y_t\) exhibits random walk like behaviour with \(\Delta Y_t\) and \(\beta^T Y_t\) being stationary. Now define \(Q=(\beta, \alpha_{\perp})^T\) and note that 
\[
Q^{-1}=\left(
    \alpha(\beta^T \alpha)^{-1}, \beta_{\perp}(\alpha_{\perp}^T \beta_{\perp})^{-1}
\right).
\]
Then, with \(X_t = Q Y_t\), \(U_t = Q Z_t\), and 
\[
\Gamma = Q \Pi Q^{-1} = 
\begin{pmatrix}
    \beta^T \!\alpha  \, & \,   0   \\
    0               \, & \,   0
\end{pmatrix},
\]
we get the \(Q\)-transformed version of model (\ref{eq: vecm}),
\begin{equation}
    \Delta X_t = \Gamma X_{t-1} + U_t \label{eq: qvecm}.
\end{equation}
We have effectively split up the original process \(Y_t\) into a stationary part and a random walk part. In particular, if \(X_{1t}\) denotes the first \(r\) components of \(X_t\) and \(X_{2t}\) the last $n$ components, we have the following relations
\begin{align}
    \Delta X_{1t} &= \beta^T \alpha X_{1t-1} + U_{1t} \label{eq: xone}\\
    \Delta X_{2t} &= U_{2t}. \label{eq: xtwo}
\end{align}
We shall first study estimators of \(\Gamma\) from observations $X_0, X_1, \ldots , X_T$ and then transfer the results to the original parameter of interest, \(\Pi\). The reason for taking this small detour is that it will give more clarity to the limiting behaviour of different parts of the estimator corresponding to either the random walk or the stationary part of the process.

Before describing the asymptotics of the estimators we need some results regarding the cross-covariances. Specifically, define the empirical cross-covariances
\[
S_{X X} = \frac{1}{T}\sum_{t=1}^T X_{t-1} X_{t-1}^T, \quad
S_{U X} = \frac{1}{T}\sum_{t=1}^T U_t X_{t-1}^T,
\]
\[
S_{\Delta X X} = \frac{1}{T}\sum_{t=1}^T \Delta X X_{t-1}^T, \quad
S_{\Delta X \Delta X} = \frac{1}{T}\sum_{t=1}^T \Delta X \Delta X^T,
\]
and the covariance matrix \(\Sigma_U=\mathbb{E}(U_t U_t^T) = Q\Sigma_Z Q^T\). We use the following block matrix notation: For a \(p \times p\) matrix \(M\), let \(M_{11}\) denote the top left \(r\times r\) block, \(M_{22}\) the bottom right \(n\times n\) block, and \(M_{12}\) and \(M_{21}\) the two off-diagonal blocks. For notational convenience, we sometimes use a superscript instead. We implicitly assume that all the limits considered in the following sections are for \(T\rightarrow \infty\). For the stationary processes, \(X_{1t-1}\) and \(\Delta X_t\), the law of large numbers yields
\begin{align*}
    S^{11}_{XX} &\rightarrow_p \Sigma_X^{11} = \sum_{s=0}^{\infty} (I_r + \beta^T \alpha)^s \Sigma_U^{11} (I_r + \alpha^T \beta)^s \\
    S_{\Delta X \Delta X} &\rightarrow_p \Sigma_{\Delta X} = \begin{pmatrix}
        \Sigma_U^{11} + \beta^T\!\alpha \Sigma_X^{11} \alpha^T\!\beta \, & \,
        \Sigma_U^{12} \\
        \Sigma_U^{21} \, &
        \, \Sigma_U^{22}
    \end{pmatrix}.
\end{align*}
To study the asymptotics of the random walk part of the process we introduce a standard \(p\)-dimensional Brownian motion initiated at \(0\) denoted by \(\{W_s\}_{s\in[0,1]}\). We are now ready to present the crucial Lemma. A proof can be found in e.g. Lemma 7.1 in~\cite{lutkepohl2005new}.

\begin{lemma} \label{lemma: cross}
Define the \(n \times p\) matrix \(D = (0, I_n)\) and the random $r \times p$ matrix \(V^T:=(V_{11}^T, V_{21}^T)\) satisfying \(\textnormal{vec}V\sim \mathcal{N}(0, \Sigma_X^{11}\otimes \Sigma_U)\). The following converge jointly:
\begin{align}
    T^{-1}S_{XX}^{22} &\rightarrow_w D \Sigma_U^{\frac{1}{2}}\left(\int_0^1 W_s W_s^T\, ds \right) \Sigma_U^{\frac{1}{2}}D^T =: B \label{eq: cross_1}\\
    \begin{pmatrix}
        S_{UX}^{12} \\
        S_{UX}^{22}
    \end{pmatrix} &\rightarrow_w \Sigma_U^{\frac{1}{2}}\left(\int_0^1 W_s \, d W_s^T\right)^T \Sigma_U^{\frac{1}{2}} D^T =: \begin{pmatrix}
        J_{12} \\
        J_{22}
    \end{pmatrix} \label{eq: cross_2}\\
    T^{\frac{1}{2}}\begin{pmatrix}
        S_{UX}^{11} \\
        S_{UX}^{21}
    \end{pmatrix} &\rightarrow_w V = \begin{pmatrix}
        V_{11} \\
        V_{21}
    \end{pmatrix}. \label{eq: cross_3}
\end{align}
Furthermore,
\begin{align}
    S^{11}_{XX} &\rightarrow_p \Sigma_X^{11} \\
    S_{\Delta X \Delta X} &\rightarrow_p \Sigma_{\Delta X}.
\end{align}
\end{lemma}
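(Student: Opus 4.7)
The plan is to assemble four classical ingredients: Donsker's invariance principle for the i.i.d.\ innovations, a convergence-to-stochastic-integrals result for bilinear forms in innovations and the random walk, a multivariate martingale CLT for sums indexed by the stationary component, and the ergodic theorem for purely stationary averages. Joint convergence is then obtained by combining these with Slutsky and the asymptotic independence of the $\sqrt T$-rate Gaussian from the Brownian limit.

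First I would normalize the innovations: setting $\tilde U_t = \Sigma_U^{-1/2}U_t$ and $W_T(s) = T^{-1/2}\sum_{t=1}^{\lfloor Ts\rfloor}\tilde U_t$, Donsker gives $W_T \Rightarrow W$ in $D([0,1]; \mathbb R^p)$, so that $T^{-1/2}X_{2,\lfloor Ts\rfloor} = D\Sigma_U^{1/2}W_T(s) \Rightarrow D\Sigma_U^{1/2}W(s)$. Continuous mapping then yields \eqref{eq: cross_1} via the Riemann-sum approximation to $\int_0^1 D\Sigma_U^{1/2}W_T W_T^T \Sigma_U^{1/2}D^T\, ds$. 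For \eqref{eq: cross_2}, I would rewrite $T^{-1}\sum_t U_t X_{2,t-1}^T = \Sigma_U^{1/2}\bigl(T^{-1/2}\sum_t \tilde U_t W_T((t-1)/T)^T\bigr)\Sigma_U^{1/2}D^T$ and invoke the classical convergence-to-stochastic-integrals theorem (Chan--Wei 1988, Phillips 1988, or Kurtz--Protter) to identify the inner limit as $\int_0^1 (dW)W^T = (\int_0^1 W\, dW^T)^T$. Partitioning the first $r$ and last $n$ rows of $U$ recovers $J_{12}$ and $J_{22}$.

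Second, for the $\sqrt T$-rate claim \eqref{eq: cross_3}: under Assumption \ref{as: roots}, $I_r + \beta^T\alpha$ has spectral radius strictly less than one, so $X_{1,t}$ admits the stationary causal MA representation $\sum_{s\ge 0}(I_r+\beta^T\alpha)^s U_{1,t-s}$. The array $U_t \otimes X_{1,t-1}$ is then a martingale difference with respect to $\mathcal F_t = \sigma(U_s : s\le t)$ with conditional covariance $X_{1,t-1}X_{1,t-1}^T \otimes \Sigma_U$. Ergodicity of $X_{1,t}$ gives $S_{XX}^{11}\to_p \Sigma_X^{11}$, so the predictable quadratic variation converges to $\Sigma_X^{11}\otimes \Sigma_U$; a Lindeberg condition follows from the bounded fourth moment hypothesis. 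The multivariate martingale CLT (e.g.\ Brown 1971) delivers the stated Gaussian limit $V$, and row-partitioning identifies $V_{11}$ and $V_{21}$.

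The remaining probability limits are straightforward applications of the ergodic theorem to the stationary processes $X_{1,t}$ and $\Delta X_t$, with the block structure of $\Sigma_{\Delta X}$ computed from \eqref{eq: xone}--\eqref{eq: xtwo} using the independence of $U_t$ and $\mathcal F_{t-1}$. For full joint convergence one checks that $V$ is asymptotically independent of $W$: any would-be cross-term is a fractional-time average of $X_{1,t-1}\tilde U_t^T$, which is $O_p(T^{-1/2})$ against the $O(1)$ random-walk partial sums and hence vanishes in the limit; the probability-limit pieces then attach by Slutsky. The hard part is the stochastic-integral step, since the map $(f, g) \mapsto \int f\, dg$ is not continuous on $D[0,1]$, so one cannot invoke continuous mapping directly and must rely on the specialized semimartingale convergence result; the i.i.d.\ assumption on $Z_t$ (hence $U_t$) is what rules out any one-sided long-run-covariance bias term in the limit.
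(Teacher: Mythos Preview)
Your outline is correct and is essentially the standard argument: Donsker plus continuous mapping for \eqref{eq: cross_1}, a convergence-to-stochastic-integrals result for \eqref{eq: cross_2}, a martingale CLT for \eqref{eq: cross_3}, and the ergodic theorem for the two probability limits, glued together by Slutsky and the asymptotic independence of the $\sqrt{T}$-rate Gaussian from the Brownian functionals. The paper itself does not supply a proof of this lemma; it simply refers to Lemma~7.1 in L\"utkepohl (2005), and the argument there follows exactly the lines you sketch (with the stochastic-integral step handled via the Chan--Wei/Phillips-type result you cite). So there is nothing to compare: you have reconstructed the proof the paper outsources, and the only place where your write-up could be tightened is the joint-convergence/independence step, where a one-line appeal to the Cram\'er--Wold device together with the fact that the cross quadratic covariation between $T^{-1/2}\sum U_t X_{1,t-1}^T$ and $W_T$ vanishes (since $X_{1,t-1}$ is bounded in $L^2$ while the random-walk weight is $O_p(T^{1/2})$ only after summing) would replace the slightly informal ``fractional-time average'' sentence.
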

A direct consequence of the above Lemma and summation by parts is that \(S_{\Delta X X}^{12}\rightarrow_p -\Sigma_U^{12}\) (see section 3.1 in \cite{anderson2002reduced}). Thus, we have fully uncovered the asymptotic behaviour of \(S_{\Delta X X}\) as well. Indeed, we can write
\[
\begin{pmatrix}
    S_{\Delta X X}^{11} \, & \, S_{\Delta X X}^{12} \\
    S_{\Delta X X}^{21} \, & \, S_{\Delta X X}^{22}
\end{pmatrix} = \begin{pmatrix}
    \beta^T\!\alpha S_{X X}^{11} \, & \, \beta^T\!\alpha S_{X X}^{12} \\
    0 & 0
\end{pmatrix} + \begin{pmatrix}
    S_{UX}^{11} \, & \, S_{UX}^{12} \\
    S_{UX}^{21} \, & \, S_{UX}^{22}
\end{pmatrix}.
\]
We have already established that the top right block converges in probability to \(-\Sigma_U^{12}\) and limits for the remaining three blocks follow easily from Lemma \ref{lemma: cross}. Note that, whereas \(S_{\Delta X X}^{11}\), \(S_{\Delta X X}^{12}\), and \(S_{\Delta X X}^{21}\) converge in probability, \(S_{\Delta X X}^{22}=S_{UX}^{22}\) converges only weakly. From this it also follows that \(S^{12}_{XX}\) and \(S^{21}_{XX}\) are bounded in probability whence \(T^{-\frac{1}{2}}S^{12}_{XX}, T^{-\frac{1}{2}}S^{21}_{XX}\rightarrow_p 0\).

\section{Asymptotic Distributions of Reduced Rank Estimators}
\label{sec: asymp}

With Lemma \ref{lemma: cross} in our arsenal, we are ready to study the asymptotic behaviour of estimators of $\Gamma$. In particular, we shall focus on the standard cointegration estimators \citep{johansen1995likelihood}. This is a collection of estimators that can be obtained by solving a generalized eigenvalue problem. We consider
\[
|S_{X \Delta X}(S_{\Delta X \Delta X})^{-1}S_{\Delta X X} - \hat{\lambda} S_{XX}|=0
\]
and order the solutions in decreasing order, \(\hat{\lambda}_1 \ge \hat{\lambda}_2 \ge ... \ge \hat{\lambda}_p\). With \(\hat{\Lambda}:=\textnormal{diag}(\hat{\lambda}_1, ..., \hat{\lambda}_p)\), denote by \(\hat{G}\) the \(p\times p\) matrix solving
\begin{equation}\label{eq: eig1}
     S_{X \Delta X}(S_{\Delta X \Delta X})^{-1}S_{\Delta X X} \hat{G} = S_{XX}\hat{G} \hat{\Lambda},  
\end{equation}
\begin{equation}\label{eq: eig2}
    \hat{G}^T S_{XX} \hat{G} = I_p.
\end{equation}
In column vector notation we write \(\hat{G} = (\hat{g}_1,..., \hat{g}_p)\). For any \(m_1\times m_2\) matrix \(M\) we shall write \(M^{:k}\) for the \(m_2\times k\) matrix consisting of the first \(k\le m_2\) columns of \(M\). Keeping in line with our previous block matrix notation, we write \(\hat{G}_{11}\) and \(\hat{G}_{22}\) for the top left \(r\times r\) block and the bottom right \(n\times n\) block of \(\hat{G}\) respectively and \(\hat{G}_{21}\) and \(\hat{G}_{12}\) for the off diagonal blocks. The reduced rank estimators are given by
\begin{equation}
\label{eq: Gammahatk}
\hat{\Gamma}_k = S_{\Delta X X}\hat{G}^{:k} \left(\hat{G}^{:k}\right)^T
\end{equation}
for \(k=1,...,p\). \(\hat{\Gamma}_k\) is called the reduced rank estimator of \(\Gamma\) for rank \(k\). One can show that \(\hat{\Gamma}_k\) is the maximum likelihood estimator of \(\Gamma\) under Gaussian errors when the data generating process is given by (\ref{eq: qvecm}) and the rank of \(\Gamma\) is fixed at \(k\)~\citep{johansen1995likelihood}. In our case the true rank is \(1 < r < p\) and the reduced rank estimator for a correctly specified rank is therefore \(\hat{\Gamma}_r\). Another special case is the least squares estimator \(\hat{\Gamma}_{\text{LS}}\) which is, in fact, equal to \(\hat{\Gamma}_p\).

For future reference, we also define the (appropriately rescaled) population versions of \(\hat{\lambda}\) and \(\hat{G}\). We let \(\lambda_1\ge \dots\ge \lambda_p\) be the ordered solutions to
\begin{equation} \label{eq: eig_pop}
    \left|\begin{pmatrix}
        \Sigma_X^{11}\alpha^T\beta\left(\Sigma_{\Delta X}^{-1}\right)_{11}\beta^T\alpha\Sigma_X^{11} & 0 \\
        0 & J_{22}
    \end{pmatrix} - \lambda \begin{pmatrix}
        \Sigma_X^{11} & 0 \\
        0 & B
    \end{pmatrix}\right| = 0
\end{equation}
with \(G=(g_1, \dots, g_p)\) the corresponding eigenvectors normalized so that 
\[
G^T\begin{pmatrix}
        \Sigma_X^{11} & 0 \\
        0 & B
    \end{pmatrix}G = I_p.
\]
The block diagonal structure implies that almost surely \(G_{12}\) and \(G_{21}\) are 0 with \(G_{11}\) and \(G_{22}\) solving the two seperate eigenvalue problems defined by the diagonal blocks in \eqref{eq: eig_pop}. This furthermore implies that the first \(r\) eigenvalues and eigenvectors are deterministic while the last \(n\) eigenvalues and eigenvectors are random. We let \(\Lambda = \text{diag}(\lambda_1, \dots, \lambda_p)\).

It makes sense to distinguish between three different situations and study them separately. First, the reduced rank estimator where the true rank is given a priori. In this case we include exactly enough information and the resulting estimator is optimal, among the estimators considered here, in the following sense: For all \(1\le k \le p\) for which \(\hat{\Gamma}_k\) is consistent, \(\hat{\Gamma}_r\) has the lowest asymptotic variance. 

Knowing the number of cointegrating relations, however, is often unrealistic. This leads us to consider the estimators \(\hat{\Gamma}_{k_1}\) and \(\hat{\Gamma}_{k_2}\) for \(1\le k_1 < r < k_2 \le p\). The former has underestimated rank and we will show that it is asymptotically biased, but under some circumstances the bias might be small enough to make it preferable in a bias-variance trade-off. The latter has overestimated rank and we will show that it is consistent, but its variance is inflated when compared to \(\hat{\Gamma}_r\). We first recall the known limiting behaviour of \(\hat{\Gamma}_r\) since this serves as an illustrating case and highlights many of the ideas involved in the study of the other two cases. We then derive the limiting behavior of the estimators with misspecified ranks, which is the first main contribution of this paper.

\subsection{Correctly Specified Rank}

We start with a result due to \cite{anderson2002reduced}. The statement of the Theorem as well as the proof are essentially the same as in \cite{anderson2002reduced}. A proof can be found in Section \ref{sec: proofs}.

\begin{theorem} \label{thm: true_rank}
Define \(\tilde{J}_{12}:= (J_{12} - \Sigma_U^{12}(\Sigma_U^{22})^{-1}J_{22})\) and let \(\textnormal{rank}(\Gamma) = r\). Then,
\begin{equation} \label{eq: asym_true}
    \begin{pmatrix}
        T^{\frac{1}{2}}(\hat{\Gamma}_r^{11} - \Gamma_{11}) \, & \, T(\hat{\Gamma}_r^{12} - \Gamma_{12}) \\
        T^{\frac{1}{2}}(\hat{\Gamma}_r^{21} - \Gamma_{21}) \, & \, T(\hat{\Gamma}_r^{22} - \Gamma_{22})
    \end{pmatrix} \rightarrow_w \begin{pmatrix}
        V_{11}(\Sigma_X^{11})^{-1} \, & \, \tilde{J}_{12} B^{-1} \\
        V_{21}(\Sigma_X^{11})^{-1} \, & \, 0
    \end{pmatrix},
\end{equation}
where $J_{12}, J_{22}$ and $B$ are defined in Lemma \ref{lemma: cross}.
\end{theorem}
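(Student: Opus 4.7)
My plan is to exploit a decomposition afforded by the normalisation. From \(\hat G^T S_{XX}\hat G = I_p\) it follows that
\[
S_{XX}^{-1} \;=\; \hat G\hat G^T \;=\; \hat G^{:r}(\hat G^{:r})^T + \hat G^{r+1:p}(\hat G^{r+1:p})^T,
\]
which together with \(S_{\Delta X X} = \Gamma S_{XX} + S_{UX}\) yields the key identity
\[
\hat\Gamma_r - \Gamma \;=\; S_{UX}S_{XX}^{-1} \;-\; S_{\Delta X X}\hat G^{r+1:p}(\hat G^{r+1:p})^T.
\]
The theorem then reduces to computing the blockwise weak limits of these two terms at the advertised rates and summing them.

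For the least squares residual \(S_{UX}S_{XX}^{-1}\), partitioned matrix inversion combined with Lemma \ref{lemma: cross} (using \(S_{XX}^{11}\rightarrow_p \Sigma_X^{11}\), \(T^{-1}S_{XX}^{22}\rightarrow_w B\), and \(T^{-1/2}S_{XX}^{12}\rightarrow_p 0\)) gives that its top-left and bottom-left blocks, scaled by \(T^{1/2}\), converge weakly to \(V_{11}(\Sigma_X^{11})^{-1}\) and \(V_{21}(\Sigma_X^{11})^{-1}\), while its top-right and bottom-right blocks, scaled by \(T\), converge weakly to \(J_{12}B^{-1}\) and \(J_{22}B^{-1}\). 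Comparison with the claimed limit then fixes exactly what the correction term must contribute: \(0\) to the first \(r\) columns at rate \(T^{1/2}\), \(J_{22}B^{-1}\) to the bottom-right at rate \(T\), and \(\Sigma_U^{12}(\Sigma_U^{22})^{-1}J_{22}B^{-1}\) to the top-right at rate \(T\), so that \(J_{12}B^{-1}\) is replaced by \(\tilde J_{12}B^{-1}\).

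The correction term is controlled by the asymptotics of the trailing eigenvectors. Rescaling \eqref{eq: eig1}--\eqref{eq: eig2} by \(T\) and invoking Lemma \ref{lemma: cross}, the last \(n\) eigenvalues are \(O_p(T^{-1})\) and the bottom block satisfies \(T^{1/2}\hat G_{22}\rightarrow_w G_{22}\), where \(G_{22}\) solves \(|J_{22}-\lambda B|=0\) under the normalisation \(G_{22}^T B G_{22} = I_n\), which in particular implies \(G_{22}G_{22}^T = B^{-1}\). Expanding \(S_{\Delta X X}\hat G^{r+1:p}(\hat G^{r+1:p})^T\) blockwise then uses these inputs: for instance, the bottom-right contribution at rate \(T\) is \(S_{UX}^{22}(T\hat G_{22}\hat G_{22}^T)\rightarrow_w J_{22}B^{-1}\) as required, and the two top-left/bottom-left contributions vanish at rate \(T^{1/2}\) once one checks the order \(\hat G_{12} = O_p(T^{-1/2})\) and the order of each factor entering the product.

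The hardest part, and the place where the \(\tilde J_{12}\) adjustment is generated, is the top-right block at rate \(T\). There the two pieces \(S^{11}_{\Delta X X}\hat G_{12}\hat G_{22}^T\) and \(S^{12}_{\Delta X X}\hat G_{22}\hat G_{22}^T\) are of the same order and must combine to \(\Sigma_U^{12}(\Sigma_U^{22})^{-1}J_{22}B^{-1}\). The second piece is delivered by \(S^{12}_{\Delta X X}\rightarrow_p -\Sigma_U^{12}\) together with the limit of \(T\hat G_{22}\hat G_{22}^T\), but the first requires a careful first-order expansion for \(\hat G_{12}\) obtained from the eigenvalue equation \eqref{eq: eig1} and the orthogonality \((\hat G^{:r})^T S_{XX}\hat G^{r+1:p} = 0\). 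This expansion produces precisely the regression coefficient \(\Sigma_U^{12}(\Sigma_U^{22})^{-1}\) appearing in \(\tilde J_{12}\), reflecting that the rank-\(r\) MLE implicitly partials out the nonstationary innovations from the stationary residuals. Assembling the four block limits completes the proof.
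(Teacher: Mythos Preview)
Your approach is correct but genuinely different from the paper's. The paper analyzes the \emph{leading} block \(\hat G^{:r}\) directly: it shows \(\hat G_{11}\hat G_{11}^T=(S_{XX}^{11})^{-1}+o_P(T^{-1/2})\), derives \(T\hat G_{21}=H_1\hat G_{11}+o_P(1)\) with \(H_1=(T^{-1}S_{XX}^{22})^{-1}(A_{21}A_{11}^{-1}S_{XX}^{11}-S_{XX}^{21})\), and reads off all four limits from the block expansion of \(S_{\Delta X X}\hat G^{:r}(\hat G^{:r})^T\); the \(\tilde J_{12}\) factor arises from the weak limit of \(H_1\). You instead write \(\hat\Gamma_r=\hat\Gamma_{\mathrm{LS}}-S_{\Delta X X}\hat G^{r+1:p}(\hat G^{r+1:p})^T\) and analyze the \emph{trailing} block, which is precisely the machinery the paper develops later for Theorem~\ref{thm: over_rank}. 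Your route makes the cancellation in the \(22\)-block transparent (both pieces give \(J_{22}B^{-1}\)) and gives a clean interpretation of \(\tilde J_{12}\) as LS minus a partialling-out term; the paper's route is more self-contained for this theorem and yields the structural fact \(\hat G_{11}\hat G_{11}^T\approx(S_{XX}^{11})^{-1}\) as a by-product, which it reuses elsewhere.

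Two small points. First, your claim that \(G_{22}\) solves \(|J_{22}-\lambda B|=0\) is not right: the limiting trailing eigenproblem is \(|J_{22}^T(\Sigma_U^{22})^{-1}J_{22}-\lambda B|=0\) (since \(J_{22}\) is not symmetric). This does not matter for your argument, because you only use \(G_{22}G_{22}^T=B^{-1}\), and that follows from the normalization \(\hat G_{22}^T S_{XX}^{22}\hat G_{22}=I_n+o_P(1)\) alone, without any reference to which eigenproblem is being solved. Second, the orthogonality relation \((\hat G^{:r})^TS_{XX}\hat G^{r+1:p}=0\) by itself does not pin down \(\hat G_{12}\) to the required \(o_P(T^{-1/2})\) precision, because the cross term \(\hat G_{21}^TS_{XX}^{22}\hat G_{22}\) is only \(O_P(T^{-1/2})\); you really do need the top block of the eigenvalue equation \eqref{eq: eig1}, which gives \(\hat G_{12}=-A_{11}^{-1}A_{12}\hat G_{22}+o_P(T^{-1})\), exactly as in the paper's proof of Theorem~\ref{thm: over_rank}. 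With that input your expansion for the \(12\)-block goes through.
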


Note that the rate of convergence for the right two blocks is \(o_P(T^{-1})\) contrary to the usual reduced rank regression setting of independent observations where the rate of convergence  is \(o_P(T^{-\frac{1}{2}})\) for all blocks. This is because \(T \hat{G}_{21}^{:r}(\hat{G}_{11}^{:r})^T\) and \(T^2 \hat{G}_{21}^{:r}(\hat{G}_{21}^{:r})^T\) are convergent, where \(\hat{G}\) is defined in \eqref{eq: eig1}--\eqref{eq: eig2}, as can be seen in the proof.

 
\subsection{Overestimated Rank} \label{sec: asymp_over}
 
Let the true rank of \(\Pi\) be \(0 < r < p\). We are interested in the reduced rank estimator \(\hat{\Gamma}_{r+m}\) with \(r < r + m \le p\). The above results for \(\hat{\Gamma}_{r}\) suggest that this estimator is consistent and with a limiting behaviour somewhat close to that of \(\hat{\Gamma}_r\) depending on \(m\). To tackle this problem, we first analyze the asymptotics of the last \(n\) columns of \(\hat{G}\). Unfortunately, we cannot directly adopt the methods from the previous section, but in much the same way we start with (\ref{eq: eig1}) and (\ref{eq: eig2}). 

Consider equation (\ref{eq: eig2}) in block matrix notation. Using that \((I_r, T^{\frac{1}{2}}I_n) \hat{G}\) is bounded in probability and that \(T^{\frac{3}{4}}\hat{G}_{21}\) converges in probability to 0 (see proof of Theorem \ref{thm: true_rank}), we find that \(\hat{G}_{21}^T S_{XX}^{21} \hat{G}_{12}\), \(\hat{G}_{11}^T S_{XX}^{12} \hat{G}_{22}\), and \(\hat{G}_{21}^T S_{XX}^{22} \hat{G}_{22}\) are \(o_P(T^{-\frac{1}{4}})\). The top-right block of (\ref{eq: eig2}) then reduces to
\[
\hat{G}_{11}^T S_{XX}^{11} \hat{G}_{12} + o_P(T^{-\frac{1}{4}}) = 0
\]
so that \(\hat{G}_{12} = o_P(T^{-\frac{1}{4}})\). By an analogous argument we get \(\hat{G}_{22}S_{XX}^{22}\hat{G}_{22} = I_n + o_P(T^{-\frac{1}{2}})\) and therefore \(T\hat{G}_{22}\hat{G}_{22}^T = (T^{-1}S_{XX}^{22})^{-1} + o_P(T^{-\frac{1}{2}})\). Note that the least squares estimator is given by
\[
\hat{\Gamma}_{\textnormal{LS}} = S_{\Delta X X}(S_{XX})^{-1} = S_{\Delta X X}\hat{G} \hat{G}^T = \hat{\Gamma}_p
\]
corresponding to the case where \(m = n\). Thus, we have obtained an asymptotic distribution for the least squares estimator, albeit in a slightly indirect way. This will also be a consequence of the following more general result. 

\begin{theorem} \label{thm: over_rank}
Assume that \(\textnormal{rank}(\Gamma)=r< p\) and \(1\le m \le n= p-r\). Then,
\begin{equation} \label{eq: asym_over}
    \begin{pmatrix}
        T^{\frac{1}{2}}(\hat{\Gamma}_{r+m}^{11} - \Gamma_{11}) \, & \, T(\hat{\Gamma}_{r+m}^{12} - \Gamma_{12}) \\
        T^{\frac{1}{2}}(\hat{\Gamma}_{r+m}^{21} - \Gamma_{21}) \, & \, T(\hat{\Gamma}_{r+m}^{22} - \Gamma_{22})
    \end{pmatrix} \rightarrow_w \begin{pmatrix}
        V_{11}(\Sigma_X^{11})^{-1} \, & \, \tilde{J}_{12} B^{-1} + \tilde{J}_{22} P_m\\
        V_{21}(\Sigma_X^{11})^{-1} \, & \, J_{22} P_m
    \end{pmatrix}
\end{equation}
where \(\tilde{J}_{22}:=\Sigma_U^{12}(\Sigma_U^{22})^{-1}J_{22}\) and \(P_m = G_{22}^{:m} (G_{22}^{:m})^T\) with G defined in \eqref{eq: eig_pop}.
\end{theorem}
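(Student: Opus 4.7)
The plan is to start from the additive decomposition
\[
\hat\Gamma_{r+m} = S_{\Delta XX}\hat G^{:r}(\hat G^{:r})^T + S_{\Delta XX}\hat H\hat H^T = \hat\Gamma_r + S_{\Delta XX}\hat H\hat H^T,
\]
where $\hat H := (\hat g_{r+1},\ldots,\hat g_{r+m})$ collects the $m$ extra columns of $\hat G$. Theorem~\ref{thm: true_rank} already yields the $\hat\Gamma_r$ entries of the right-hand side of (\ref{eq: asym_over}) at the stated scalings $T^{1/2}$ and $T$. What remains is to show that $S_{\Delta XX}\hat H\hat H^T$ contributes nothing at order $T^{-1/2}$ in the left blocks and contributes precisely $\tilde J_{22}P_m$ and $J_{22}P_m$ at order $T^{-1}$ in the top-right and bottom-right blocks.

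To analyse $S_{\Delta XX}\hat H\hat H^T$ I would pin down the joint asymptotics of the small-eigenvalue eigenvectors $\hat g_{r+1},\ldots,\hat g_{r+m}$. Put $E_T := \textnormal{diag}(I_r, T^{1/2}I_n)$ and $\tilde g_k := E_T\hat g_k$. In the rescaled generalised eigenvalue problem
\[
E_T^{-1}S_{X\Delta X}(S_{\Delta X\Delta X})^{-1}S_{\Delta XX}E_T^{-1}\,\tilde g_k \;=\; \hat\lambda_k\, E_T^{-1}S_{XX}E_T^{-1}\,\tilde g_k,
\]
each pencil matrix converges jointly (by Lemma~\ref{lemma: cross} and the asymptotics of $S_{\Delta XX}$ and $S_{\Delta X\Delta X}$ recalled in Section~\ref{sec: pre}) to the corresponding pencil matrix in (\ref{eq: eig_pop}), and the $n$ small eigenvalues $T\hat\lambda_{r+1},\ldots,T\hat\lambda_p$ converge jointly to the small eigenvalues of (\ref{eq: eig_pop}). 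By continuity of the eigenvector map restricted to the small-eigenvalue block one obtains $T^{1/2}\hat H_2 \to_w G_{22}^{:m}$, and hence $T\hat H_2\hat H_2^T \to_w P_m$. Exploiting the normalisation $\hat G^T S_{XX}\hat G = I_p$ together with the sharpened block orders used in Section~\ref{sec: asymp_over} also yields $\hat H_1 = O_P(T^{-1/2})$ with an explicit leading expansion of $T^{1/2}\hat H_1$ in terms of $T^{1/2}\hat H_2$ and $S_{XX}^{12}$.

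It then remains to plug these limits into the blocks of $S_{\Delta XX}\hat H\hat H^T$ one by one, using the rates of the blocks of $S_{\Delta XX}$ recalled below Lemma~\ref{lemma: cross}. The left blocks involve $\hat H_1\hat H_1^T$ and $\hat H_2\hat H_1^T$, both $o_P(T^{-1/2})$, so they leave the Theorem~\ref{thm: true_rank} limits for the left blocks of $\hat\Gamma_{r+m}$ unchanged. For the bottom-right block only $S_{\Delta XX}^{22}\cdot T\hat H_2\hat H_2^T = S_{UX}^{22}\cdot T\hat H_2\hat H_2^T$ survives, giving $J_{22}P_m$ directly from Lemma~\ref{lemma: cross}. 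The top-right block is the subtlest, since both $S_{\Delta XX}^{11}\hat H_1\hat H_2^T$ and $S_{\Delta XX}^{12}\hat H_2\hat H_2^T$ contribute at order $T^{-1}$; inserting the expansion of $T^{1/2}\hat H_1$ and using the joint weak limits of $(S_{UX}, S_{XX}^{22}, S_{XX}^{12})$ under the common Brownian motion together with the population relation $J_{22}G_{22} = BG_{22}\Lambda_{22}$ coming from (\ref{eq: eig_pop}), the two pieces add up to $\tilde J_{22}P_m$. The main obstacle is exactly this last algebraic identification, and, on the technical side, verifying joint convergence of the rescaled pencil and continuity of the eigenvector map on the small-eigenvalue block; both reduce ultimately to the joint Brownian-motion limits of Lemma~\ref{lemma: cross}.
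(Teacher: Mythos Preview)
Your overall decomposition $\hat\Gamma_{r+m}=\hat\Gamma_r+S_{\Delta XX}\hat H\hat H^T$ and the block-by-block plan are the same as the paper's, and the left blocks as well as the bottom-right block go through as you describe. The genuine gap is the step where you extract the limit of $T^{1/2}\hat H_2$ from the $E_T$-rescaled pencil. With $A=S_{X\Delta X}S_{\Delta X\Delta X}^{-1}S_{\Delta XX}$, the $(2,2)$-block of $E_T^{-1}AE_T^{-1}$ is $T^{-1}A_{22}$, and since $A_{22}=O_P(1)$ this converges to $0$, not to the $(2,2)$-block appearing in (\ref{eq: eig_pop}). Thus the limiting pencil is $\bigl(\textnormal{diag}(*,0),\,\textnormal{diag}(\Sigma_X^{11},B)\bigr)$: its small eigenvalue is $0$ with multiplicity $n$, the associated eigenspace is all of $\mathbb{R}^n$, and eigenvector continuity (Lemma~\ref{lemma: eig_vec}) only delivers convergence of the full projector $T\hat G_{22}\hat G_{22}^T\to_w B^{-1}$, never of the partial projector $T\hat H_2\hat H_2^T$ for $m<n$. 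To resolve this degeneracy one needs an extra magnification on the $(2,2)$-block. The paper achieves it by a Schur-type reduction: the top row of the eigenvalue equation gives $\hat G_{12}^{:m}=H_2\hat G_{22}^{:m}+o_P(T^{-1})$ with $H_2=-A_{11}^{-1}A_{12}$, and substituting into the bottom row produces the non-degenerate reduced problem $(A_{22}-A_{21}A_{11}^{-1}A_{12})L=T^{-1}S_{XX}^{22}\,L\,(T\hat\Lambda_{22}^{:m:m})$, where $A_{22}-A_{21}A_{11}^{-1}A_{12}\to_w J_{22}^T(\Sigma_U^{22})^{-1}J_{22}$ is nontrivial. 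Davis--Kahan together with Lemma~\ref{lemma: eig_vec} then yields $LL^T\to_w G_{22}^{:m}(G_{22}^{:m})^T$.

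A secondary point: the expansion of $\hat H_1$ you propose to read off from the off-diagonal block of $\hat G^TS_{XX}\hat G=I_p$ is not only ``in terms of $T^{1/2}\hat H_2$ and $S_{XX}^{12}$''. The contribution $\hat G_{21}^TS_{XX}^{22}\hat H_2$ is of the same order $O_P(T^{-1/2})$ and cannot be dropped; if you trace through the expansion of $T\hat G_{21}$ from the proof of Theorem~\ref{thm: true_rank} you will find that the two contributions combine exactly to $H_2\hat H_2$. The paper's route via the eigenvalue equation therefore gives $\hat H_1$ more directly, and the identification of the top-right extra term as $\tilde J_{22}P_m$ then follows from $\beta^T\alpha S_{XX}^{11}H_2+S_{\Delta XX}^{12}\to_w \tilde J_{22}$ without needing the population relation $J_{22}G_{22}=BG_{22}\Lambda_{22}$.
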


It is instructive to compare the limiting distribution in \eqref{eq: asym_over} with (\ref{eq: asym_true}). What effectively happens when inflating the rank is that we are including columns of \(\hat{G}\) that are not relevant. This leads to an increased variance as illustrated by the terms \(\tilde{J}_{22}G_{22}^{:m} (G_{22}^{:m})^T\) and \(J_{22}G_{22}^{:m} (G_{22}^{:m})^T\). The higher \(m\) is, the more the variance increases. For small \(m\) compared to \(p\), there might not be any major issues. In line with our intuition, it is thus advisable to get as close as possible to the true rank. Setting \(m=0\) corresponds to dropping all columns of \(G_{22}^{:m}\) and we end up with (\ref{eq: asym_true}). For the least squares estimator  the above expression simplifies somewhat. Indeed, \(G_{22}^{:n} = G_{22}\) and thus \(G_{22}^{:n} (G_{22}^{:n})^T = B^{-1}\). Plugging this into (\ref{eq: asym_over}) yields \(\tilde{J}_{12} B^{-1} + \tilde{J}_{22} G_{22}^{:n} (G_{22}^{:n})^T = J_{12}B^{-1}\) and \(J_{22}G_{22}^{:n} (G_{22}^{:n})^T = J_{22}B^{-1}\).

\subsection{Underestimated Rank} 

For finite samples, we might just as well underestimate the true rank, especially if one chooses the rank using the sequential testing approach that is usually applied in practice. We now consider \(\hat{\Gamma}_{m}\) for \(0 < m < r\). It is clear that the estimator will not be consistent so all we can hope for is that the asymptotic bias is small in certain situations. Before computing this bias and giving the main theorem of this section, we need an extra assumption on the generalized eigenvalues in (\ref{eq: eig_pop}).

\begin{assumption}\label{as: eig}
The first \(r\) generalized eigenvalues in (\ref{eq: eig_pop}) are simple, i.e., \(\lambda_1 >... >\lambda_r\).
\end{assumption}

This assumption is of a technical nature. It is needed for the smoothness results given in Lemma \ref{lemma: eig_vec} and \ref{lemma: eig_diff} in the Appendix. To the extent in which we apply Lemma \ref{lemma: eig_vec}, it is actually sufficient to assume \(\lambda_m > \lambda_{m+1}\). It is clear that this assumption is necessary since otherwise we would not be able to distinguish between the asymptotic eigenvectors. Whether we need all the first \(r\) eigenvalues to be simple, however, is questionable. We hypothesize that Theorem \ref{thm: under_rank} holds without this assumption, but this would require a different proof since the current proof relies on the delta method, which in turn requires sufficient smoothness of a certain map of the generalized eigenvectors. 

It immediately follows from Lemma \ref{lemma: eig_vec} in Appendix \ref{app: aux} and the proof of Theorem \ref{thm: true_rank} that \(\hat{G}_{11}^{:m}(\hat{G}_{11}^{:m})^T\rightarrow_p G_{11}^{:m} (G_{11}^{:m})^T\). Furthermore, we know that \(\hat{\Gamma}^{11}_m = \beta^T\alpha S_{XX}^{11}\hat{G}_{11}^{:m}(\hat{G}_{11}^{:m})^T + o_P(1)\). Then, since \(\beta^T\alpha = \beta^T\alpha\Sigma_X^{11}G_{11}G_{11}^T\), we find that the asymptotic bias is given by
\begin{equation}
\label{eq: b}
\hat{\Gamma}^{11}_m - \Gamma_{11} \rightarrow_p \beta^T\alpha \Sigma_{X}^{11} (G_{11}G_{11}^T - G_{11}^{:m}\left(G_{11}^{:m}\right)^T)=: b_m.
\end{equation}
We see that the asymptotic bias increases as eigenvalues are excluded and the bias is larger for larger eigenvalues. In practice this means that we only incur a small bias when underestimating the rank if the eigenvalues \(\lambda_{m+1},..., \lambda_{r}\) are small.

We obtain the following asymptotic distribution of the reduced rank estimator when the rank is underestimated. A proof can be found in Section \ref{sec: proofs}.

\begin{theorem} \label{thm: under_rank} 
Assume that \(1 \le m < r = \textnormal{rank}(\Gamma)\) and \(\lambda_1 > \cdots >\lambda_{r}\). Let \(\kappa_{ijkl}\) be the joint cumulant of \(U_{t,i}, U_{t,j}, U_{t,k}\), and \(U_{t, l}\) and assume furthermore that \(\kappa_{ijkl}=0\) for all \(1\le i,j,k,l\le p\). Let \(\tilde{V}^T = (\tilde{V}_{11}^T,  \tilde{V}_{21}^T)\) be a random matrix such that \(\textnormal{vec}(\tilde{V})\sim \mathcal{N}(0, \xi\Xi \xi^T)\), where $\Xi$ is defined in \eqref{eq: Xi} and $\xi$ is defined in \eqref{eq: xi}. Then,
\begin{equation} \label{eq: asym_under}
    \begin{pmatrix}
        T^{\frac{1}{2}}(\hat{\Gamma}_{m}^{11} - \Gamma_{11} - b) \, & \, T(\hat{\Gamma}_{m}^{12} - \Gamma_{12}) \\
        T^{\frac{1}{2}}(\hat{\Gamma}_{m}^{21} - \Gamma_{21}) \, & \, T(\hat{\Gamma}_{m}^{22} - \Gamma_{22})
    \end{pmatrix} \rightarrow_w 
    \begin{pmatrix}
        \tilde{V}_{11} \, & \, C_{m}\tilde{J}_{12}B^{-1} \\
        \tilde{V}_{21} \, & \, 0
    \end{pmatrix}
\end{equation}
where \(C_{m}=\beta^T\alpha \Sigma_{X}^{11} G_{11}^{:m}(G_{11}^{:m})^T(\beta^T\alpha)^{-1}\). The covariance matrix of \(\tilde{V}_{21}\) is equal to \(G_{11}^{:m}(G_{11}^{:m})^T\Sigma_X^{11}G_{11}^{:m}(G_{11}^{:m})^T \otimes \Sigma_U^{22}\).
\end{theorem}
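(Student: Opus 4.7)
The plan is to combine the block-matrix expansion of $\hat{\Gamma}_m = S_{\Delta X X}\hat{G}^{:m}(\hat{G}^{:m})^T$ with the asymptotics of the first $m$ generalized eigenvectors, then apply the delta method for the $(1,1)$-block and essentially recycle arguments from Theorem \ref{thm: true_rank} for the remaining blocks. A preliminary step collects the rates on $\hat{G}^{:m}$: by Lemma \ref{lemma: eig_vec} together with Assumption \ref{as: eig}, $\hat{G}_{11}^{:m}\rightarrow_p G_{11}^{:m}$, and running the rate argument from the proof of Theorem \ref{thm: true_rank} with $r$ replaced by $m$ gives $\hat{G}_{21}^{:m}=O_p(T^{-1})$ and identifies the weak limit of $T\hat{G}_{11}^{:m}(\hat{G}_{21}^{:m})^T$. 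Combined with $S_{\Delta X X}^{11}=\beta^T\!\alpha S_{XX}^{11}+S_{UX}^{11}$ and $S_{\Delta X X}^{21}=S_{UX}^{21}$ (from \eqref{eq: xone}--\eqref{eq: xtwo}) and the convergences in Lemma \ref{lemma: cross}, each block of $\hat{\Gamma}_m$ reduces to its leading term.

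For the $(1,1)$-block --- the main novelty --- the block expansion gives $\hat{\Gamma}_m^{11}=(\beta^T\!\alpha S_{XX}^{11}+S_{UX}^{11})\hat{G}_{11}^{:m}(\hat{G}_{11}^{:m})^T+o_p(T^{-1/2})$, and the identity $\beta^T\!\alpha=\beta^T\!\alpha\Sigma_X^{11}G_{11}G_{11}^T$ (from $G_{11}^T\Sigma_X^{11}G_{11}=I_r$) lets me rewrite
\[
\hat{\Gamma}_m^{11}-\Gamma_{11}-b = \beta^T\!\alpha\bigl[S_{XX}^{11}\hat{G}_{11}^{:m}(\hat{G}_{11}^{:m})^T-\Sigma_X^{11}G_{11}^{:m}(G_{11}^{:m})^T\bigr]+S_{UX}^{11}\hat{G}_{11}^{:m}(\hat{G}_{11}^{:m})^T+o_p(T^{-1/2}).
\]
The right-hand side is a smooth function of the stationary sample covariances (explicit through $S_{XX}^{11}$ and $S_{UX}^{11}$, implicit through $\hat{G}_{11}^{:m}$ via the generalized eigenvalue problem, smoothness at the population point being supplied by Lemma \ref{lemma: eig_diff} together with Assumption \ref{as: eig}) and it vanishes when evaluated at the population covariances. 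Lemma \ref{lemma: xtil} supplies the joint $T^{1/2}$-CLT with covariance $\Xi$, and the delta method then yields $T^{1/2}(\hat{\Gamma}_m^{11}-\Gamma_{11}-b)\rightarrow_w \tilde{V}_{11}$ with covariance $\xi\Xi\xi^T$, where $\xi$ is the Jacobian of the map above. The assumption $\kappa_{ijkl}=0$ simplifies the fourth-moment terms that would otherwise appear in $\Xi$.

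The remaining three blocks are handled more directly. For the $(2,1)$-block, $\hat{\Gamma}_m^{21}=S_{UX}^{21}\hat{G}_{11}^{:m}(\hat{G}_{11}^{:m})^T+S_{\Delta X X}^{22}\hat{G}_{21}^{:m}(\hat{G}_{11}^{:m})^T$; the second summand is $o_p(T^{-1/2})$ because $\hat{G}_{21}^{:m}=O_p(T^{-1})$ and $S_{\Delta X X}^{22}=O_p(1)$, so by Lemma \ref{lemma: cross} and Slutsky, $T^{1/2}\hat{\Gamma}_m^{21}\rightarrow_w V_{21}G_{11}^{:m}(G_{11}^{:m})^T=:\tilde{V}_{21}$, and a Kronecker-product computation using $\mathrm{cov}(\textnormal{vec}\,V_{21})=\Sigma_X^{11}\otimes\Sigma_U^{22}$ reproduces the stated covariance. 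For the $(1,2)$-block, $T\hat{\Gamma}_m^{12}=TS_{\Delta X X}^{11}\hat{G}_{11}^{:m}(\hat{G}_{21}^{:m})^T+o_p(1)$; combining $S_{\Delta X X}^{11}\rightarrow_p\beta^T\!\alpha\Sigma_X^{11}$ with the partial-sum version of the limit from Theorem \ref{thm: true_rank} (which evaluates to $G_{11}^{:m}(G_{11}^{:m})^T(\beta^T\!\alpha)^{-1}\tilde{J}_{12}B^{-1}$ after simplification via $G_{11}G_{11}^T=(\Sigma_X^{11})^{-1}$) gives $C_m\tilde{J}_{12}B^{-1}$. The $(2,2)$-block is $o_p(T^{-1})$ by rate bookkeeping alone.

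The main obstacle is the delta-method step for the $(1,1)$-block. Because $\hat{G}_{11}^{:m}$ is only implicitly defined through the generalized eigenvalue problem \eqref{eq: eig1}--\eqref{eq: eig2}, identifying the Jacobian $\xi$ and carrying out the matrix-calculus bookkeeping that produces the closed-form covariance $\xi\Xi\xi^T$ is the most delicate step, and it is precisely where the simple-eigenvalue Assumption \ref{as: eig}, Lemma \ref{lemma: eig_diff}, and the novel matrix-derivative computations mentioned in the introduction are needed.
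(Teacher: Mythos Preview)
Your proposal is correct and follows essentially the same route as the paper: block expansion of $\hat{\Gamma}_m$, delta method via Lemma~\ref{lemma: xtil} and Lemma~\ref{lemma: eig_diff} for the left column, and the $H_1$-limit from Theorem~\ref{thm: true_rank} together with rate bookkeeping for the right column. The only point you gloss over slightly more than the paper does is that $\hat{G}_{11}^{:m}$ is defined through the \emph{full} $p$-dimensional eigenvalue problem \eqref{eq: eig1}--\eqref{eq: eig2}, not directly through $S_{\tilde{X}\tilde{X}}$; the paper inserts a short Davis--Kahan-type step (borrowed from the proof of Theorem~\ref{thm: over_rank}) to show $\hat{G}_{11}^{:m}(\hat{G}_{11}^{:m})^T$ agrees up to $o_P(T^{-1/2})$ with the eigenvectors of the reduced $r\times r$ problem $S_{X_1\Delta X}S_{\Delta X\Delta X}^{-1}S_{\Delta X X_1}$ versus $S_{XX}^{11}$, which \emph{is} a smooth function of $S_{\tilde{X}\tilde{X}}$ and to which the delta method then applies cleanly.
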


From \(G_{11}^{:r}(G_{11}^{:r})^T=(\Sigma_{X}^{11})^{-1}\) it follows that \(C_{r} = I_r\). Comparing \eqref{eq: asym_under} with (\ref{eq: asym_true}) we see that the variances of the top right and bottom left blocks are reduced. The bottom right block also converges in probability to 0. Comparison of the top left blocks is more involved. We could not find a straightforward answer to prefer one over the other. Interestingly enough, simulations suggest that the variance may even increase in certain parts when lowering the rank \(m\). 

When \(m=r\) the expression for \(\xi\) simplifies to the one derived in Theorem \ref{thm: true_rank}, see Section \ref{sec: proofs}.

\subsection{Asymptotics in the Original Coordinates}
\label{sec: orig}

Recall that \(X_t\) defined by \eqref{eq: qvecm} was a transformation of \(Y_t\) defined by \eqref{eq: vecm} into coordinates where the stationary and random-walk parts of the process are separated. Our original parameter of interest was \(\Pi\). We now discuss how to derive central limit theorems for a family of estimators of \(\Pi\) analogous to those discussed above. In particular, we define for \(1\le k\le p\) the matrices \(\hat{L}_k = Q^T \hat{G}_k\) and the estimators \(\hat{\Pi}_k = S_{\Delta Y Y}\hat{L}^{:k}(\hat{L}^{:k})^T\). Then \(\hat{L}^{:k}\) solves 
\[
     S_{Y \Delta Y}(S_{\Delta Y \Delta Y})^{-1}S_{\Delta Y Y} \hat{L}^{:k} = S_{YY}\hat{L}^{:k} \hat{\Lambda}^{:k:k}  
\]
\[
    (\hat{L}^{:k})^T S_{YY} \hat{L}^{:k} = I_k
\]
where \(\Lambda^{:k:k} = \text{diag}(\hat{\lambda}_1, \dots, \hat{\lambda}_k)\) and the solutions to \(|S_{Y \Delta Y}(S_{\Delta Y \Delta Y})^{-1}S_{\Delta Y Y} - \hat{\lambda} S_{YY}|=0\) are the same as those for the \(Q\)-transformed cross-covariances. The columns of \(\hat{L}\) are thus the generalized eigenvectors for the generalized eigenvalue problem given by \(S_{Y \Delta Y}(S_{\Delta Y \Delta Y})^{-1}S_{\Delta Y Y}\) and \(S_{YY}\). Furthermore, we have \(\hat{\Pi}_k = Q^{-1} \hat{\Gamma}_k Q\) and, by definition, \(\Pi = Q^{-1} \Gamma Q\). Consequently, a central limit theorem for \(\hat{\Pi}_k\) is easily obtained from Theorems \ref{thm: true_rank}, \ref{thm: over_rank}, and \ref{thm: under_rank}.

\begin{theorem} \label{thm: asym_pi} 
Assume that \(1 \le r = \textnormal{rank}(\Pi) \le p\). Then, if \(r \le k \le p\),
\[
T^{\frac{1}{2}}\textnormal{vec}(\hat{\Pi}_k - \Pi)\rightarrow \mathcal{N}(0, \beta (\Sigma_X^{11})^{-1}\beta^T\otimes \Sigma_Z).
\]
If we furthermore assume that \(\lambda_1 > ... > \lambda_r > 0\) and that \(\kappa_{ijkl}=0\) for all \(1\le i,j,k,l\le p\), then, for \(1\le k < r\),
\[
T^{\frac{1}{2}}\textnormal{vec}(\hat{\Pi}_k - \Pi - \tilde{b})\rightarrow \mathcal{N}(0, \tilde{\xi}\Xi\tilde{\xi}^T)
\]
where \(\tilde{\xi} = (Q^T\otimes Q^{-1})\xi\) and \(\tilde{b}=\alpha\Sigma_X^{11}(G_{11}G_{11}^T - G_{11}^{:m}(G_{11}^{:m})^T)\beta^T\) is the asymptotic bias.
\end{theorem}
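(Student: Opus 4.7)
My plan is to leverage the exact identity $\hat{\Pi}_k - \Pi = Q^{-1}(\hat{\Gamma}_k - \Gamma)Q$ to reduce the claim to a deterministic affine transformation of the limits already proved in Theorems \ref{thm: over_rank} and \ref{thm: under_rank}, with Theorem \ref{thm: true_rank} covering $k=r$. The crucial preliminary observation is a rate-of-convergence check: in each of the cited theorems, the $(1,2)$ and $(2,2)$ blocks of $\hat{\Gamma}_k - \Gamma$ are $O_P(T^{-1})$ while the $(1,1)$ and $(2,1)$ blocks are $O_P(T^{-1/2})$. Under the single scaling $T^{1/2}$, the former contribute $o_P(1)$, so $T^{1/2}(\hat\Gamma_k - \Gamma - B_{\text{tot}})$ converges jointly to a matrix supported only on its left $p \times r$ block, where $B_{\text{tot}}$ is zero when $k \ge r$ and equals the block matrix with $b$ from \eqref{eq: b} in the top-left and zeros elsewhere when $k < r$.

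For $k \ge r$, Slutsky's theorem combined with continuous mapping gives
\[
T^{1/2}(\hat\Pi_k - \Pi) \rightarrow_w Q^{-1}(V(\Sigma_X^{11})^{-1},\, 0)\, Q = Q^{-1} V (\Sigma_X^{11})^{-1}\beta^T,
\]
using $(I_r, 0)Q = \beta^T$ from the block form of $Q$. Vectorising via $\text{vec}(ABC) = (C^T \otimes A)\text{vec}(B)$ and substituting $\text{vec}(V) \sim \mathcal{N}(0, \Sigma_X^{11} \otimes \Sigma_U)$, the resulting covariance is
\[
(\beta(\Sigma_X^{11})^{-1} \otimes Q^{-1})(\Sigma_X^{11} \otimes \Sigma_U)((\Sigma_X^{11})^{-1}\beta^T \otimes Q^{-T}),
\]
which collapses to $\beta(\Sigma_X^{11})^{-1}\beta^T \otimes \Sigma_Z$ after cancelling the $\Sigma_X^{11}$ factors and invoking $Q^{-1}\Sigma_U Q^{-T} = \Sigma_Z$, which follows from $\Sigma_U = Q\Sigma_Z Q^T$. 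For $k<r$, the transformed bias is $\tilde b = Q^{-1}B_{\text{tot}}Q = \alpha(\beta^T\alpha)^{-1} b\, \beta^T$, which after substituting \eqref{eq: b} simplifies to $\alpha\Sigma_X^{11}(G_{11}G_{11}^T - G_{11}^{:m}(G_{11}^{:m})^T)\beta^T$ as claimed. Applying $\text{vec}$ to $\hat\Pi_k - \Pi - \tilde b = Q^{-1}(\hat\Gamma_m - \Gamma - B_{\text{tot}})Q$ introduces the factor $Q^T \otimes Q^{-1}$, so $\tilde\xi = (Q^T \otimes Q^{-1})\xi$ falls out directly.

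The only genuine subtlety is the rate bookkeeping in the first step: one must verify that the slower-converging blocks do not contaminate the limit after the $T^{1/2}$ rescaling. This is immediate from the joint convergence in Theorems \ref{thm: over_rank} and \ref{thm: under_rank} together with the elementary fact $T^{1/2} \cdot O_P(T^{-1}) = o_P(1)$, so no new probabilistic input is required beyond the previous theorems; the remainder is purely Kronecker-algebra bookkeeping.
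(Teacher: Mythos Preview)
Your proposal is correct and follows essentially the same approach as the paper: both reduce to the identity $\hat{\Pi}_k - \Pi = Q^{-1}(\hat{\Gamma}_k - \Gamma)Q$, note that the right two blocks are $o_P(T^{-1/2})$ so only the left blocks survive the $T^{1/2}$ scaling, and then apply $\textnormal{vec}(Q^{-1}MQ)=(Q^T\otimes Q^{-1})\textnormal{vec}(M)$. Your write-up is in fact more explicit than the paper's in verifying the covariance identity $\beta(\Sigma_X^{11})^{-1}\beta^T\otimes\Sigma_Z$ via the cancellation $(\beta(\Sigma_X^{11})^{-1}\otimes Q^{-1})(\Sigma_X^{11}\otimes\Sigma_U)((\Sigma_X^{11})^{-1}\beta^T\otimes Q^{-T})$ together with $Q^{-1}\Sigma_U Q^{-T}=\Sigma_Z$, and in simplifying $\alpha(\beta^T\alpha)^{-1}b\,\beta^T$ to the stated $\tilde b$.
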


The \(T^{\frac{1}{2}}\) terms dominate in the limiting behaviour of \(\hat{\Pi}_k\), which is why, asymptotically, we lose nothing by overestimating the rank. However, for finite samples the case might be different. As suggested by Theorem \ref{thm: over_rank}, the variance in the random walk direction will increase if we unnecessarily inflate the rank. See \cite{anderson2002reduced} regarding further interpretation of the asymptotics of \(\Pi\).

\section{Estimation Under Rank Uncertainty}
\label{sec: estim}

The above results suggest that the choice of cointegration rank is crucial. While choosing a rank that is too high still results in a consistent estimator, underestimating the rank will result in an asymptotically biased estimator. The rank is usually found using a sequential testing approach as described in \cite{johansen1995likelihood}, which we  briefly recall here. While this approach consistently estimates the true rank (at least if the critical values of the sequential tests go to infinity at an appropriate rate with increasing sample size), disregarding the uncertainty involved in rank estimation from the corresponding post-selection reduced rank estimator might be unfavourable in some cases, especially for high dimension $p$. We therefore suggest a weighted estimator of \(\Pi\), which can be thought of as a weighted average of the estimators \(\hat{\Pi}_1,..., \hat{\Pi}_p\) with either fixed pre-specified weights or with weights inferred from the data. The post-selection estimator obtained by considering the rank-estimate as fixed is a special case where all the weight is assigned to \(\hat{\Pi}_{\hat{r}}\), \(\hat{r}\) being the rank-estimate.

\subsection{Rank Selection}
\label{sec: rank}

We start with the hypothesis \(H(0)\) that the cointegration rank is 0, that is, \(\Pi\) vanishes so that the process is a random walk. This null-hypothesis is tested either against \(H(1)\) or \(H(p)\), which are the hypotheses for cointegration rank 1 and \(p\), respectively. The latter hypothesis corresponds to \(\Pi\) having full rank and thus, under the current assumptions, to a stationary process. If \(H(0)\) is rejected at, say, a 5\% significance level, we move on to the next hypothesis \(H(1)\) and, again, test it either against \(H(2)\) or \(H(p)\). This process is repeated until reaching an \(r\) for which \(H(r)\) cannot be rejected. Assuming that \(Z_0\) is Gaussian, we can directly compute the maximized likelihood function of each hypothesis and thus also a likelihood ratio test statistic. For testing \(H(r)\) against \(H(p)\) the likelihood ratio statistic, \(LR(H(r)|H(p))\), is given by
\[
-2\log LR(H(r)| H(p)) = -T\sum_{i=r+1}^{p}\log (1-\hat{\lambda}_i).
\]
The likelihood ratio statistic for testing \(H(r)\) against \(H(r+1)\) is given by
\[
-2\log LR(H(r)| H(r+1)) = -T\log (1-\hat{\lambda}_{r+1}).
\]
The two test statistics, depending on whether we test against \(H(p)\) or \(H(r+1)\), are usually called the trace and maximum eigenvalue test statistics, respectively. The asymptotics of either can be derived from our discussions above. Assuming that the true rank is \(r_0\), both statistics tend to infinity in probability for \(r<r_0\). The null is therefore rejected when the statistic is larger than a critical value \(c\). In practice, we therefore expect to underestimate the rank in cases where one or more of the population eigenvalues \(\lambda_1, \cdots , \lambda_r\) are close to 0. Recall that the bias is determined only by the smallest \(r-m\) of these eigenvalues where \(m<r\) is the estimated rank and \(r\) the true rank.

Let \(lr=(lr_0, lr_1,..., lr_{p-1})\) denote a sequence of test statistics with either \(lr_k = -2 \log(H(k)|H(p))\) or \(lr_k = -2 \log(H(k)|H(k+1))\) and let \(c_T = (c_{T, 0},..., c_{T, p-1})\in\mathbb{R}_+^p\) be a sequence of critical values. A rank estimate is then given by 
\begin{equation} \label{eq: rank_est}
    \hat{r} = \min\left(\inf\left\{0\le k \le p | lr_k \le  c_{T, k}\right\}, p\right)
\end{equation}
where we use the convention \(\inf \emptyset = \infty\). Usually \(c_{T, k}\) is chosen to be the \((1-\alpha)100 \%\) quantile of the asymptotic distribution of either the trace or maximum eigenvalue test-statistic for some small \(\alpha\in(0, 1)\). Letting \(\alpha\) approach \(0\) with growing sample size at an appropriate rate ensures that \(\hat{r}\) is a consistent estimator of the true rank, \(r_0\).

\subsection{Weighted Reduced Rank Estimator}
\label{sec: weight}

The reduced rank estimators of \(\Gamma\) discussed thus far can all be considered as special cases of a general family of estimators weighting the contribution of each of the eigenvectors in \eqref{eq: eig1}. Indeed, for any \(1\le k\le p\), we can write
\begin{equation} \label{eq: weight_est}
    \hat{\Gamma}_k = S_{\Delta X X} \sum_{i=1}^k \hat{g}_i \hat{g}_i^T = S_{\Delta X X}\sum_{i=1}^d w_i \hat{g}_i \hat{g}_i^T
\end{equation}
where \(w_i = 1\) if \(i \le k\) and \(w_i = 0\) otherwise. For a given vector of weights, \(w\in [0, 1]^p\), with \(w_1\le w_2 \le ... \le w_p\), we refer to the estimator given by \eqref{eq: weight_est} as the \emph{weighted reduced rank estimator} of \(\Gamma\) and write \(\hat{\Gamma}_w\). It is also entirely possible to choose weights that depend on the data. Thus, the post-selection estimator, \(\hat{\Gamma}_{\hat{r}}\), with \(\hat{r}\) as given in \eqref{eq: rank_est}, can be written as a weighted reduced rank estimator with weights 
\begin{equation}
\label{eq: w hat}
\hat{w}_i = \mathbf{1}_{\{i\le \hat{r}\}} = \mathbf{1}_{\{lr_{i-1} > c_{T, i-1}\}}.
\end{equation}
Furthermore, the weighted reduced rank estimator can be viewed as a weighted average of all the individual reduced rank estimators. To see this, define the additional weights \(w_0=0\) and \(w_{p+1}=1\) and let \(W_i=w_{i+1} - w_i\) for \(i=0,..., p\). Then \(W\in[0, 1]^{p+1}\) with \(\sum W_i = 1\) and
\[
\hat{\Gamma}_w = \sum_{k=0}^p W_k\hat{\Gamma}_k
\]
with the convention \(\hat{\Gamma}_0 = 0\).

Assuming that the true rank is \(1\le r_0 \le p\) it is immediately clear from Section \ref{sec: asymp} that any weighting that does not asymptotically assign weight one to all eigenvectors, \(\hat{g}_1,..., \hat{g}_{r_0}\), will result in an asymptotically biased estimator. Conversely, if \(w_i\rightarrow_p 1\) for all \(i=1,...,r_0\), then \(\hat{\Gamma}_w\) is consistent regardless of the asymptotic behaviour of the rest of the weights. Now, for \(w\in[0, 1]^p\), let \(D = \text{diag}(w)\) and write \(D_1 = (D_{i, j})_{i,j\le r}\) and \(D_2 = (D_{i,j})_{i,j > r}\). We define the following quantities:
\begin{align*}
    b_w &= \beta^T\alpha \Sigma_X^{11}G_{11}(I_r - D_1)G_{11}^T, \\
    C_{1w} &= \beta^T\alpha \Sigma_X^{11}G_{11} D_1 G_{11}^T\left(\beta^T\alpha\right)^{-1}, \\
    C_{2w} &= G_{22}D_2G_{22}^T, \\
    \xi_w &= \sum_{i=1}^r w_i\left(\begin{pmatrix} 0_{r\times p} & P_i \end{pmatrix} \otimes \begin{pmatrix} I_p & 0_{p\times r}\end{pmatrix} + (I_r \otimes \Sigma_{\Delta X X}) \xi_i\right)
\end{align*}
where \(\xi_i\) is defined in \eqref{eq: xi_i} and \(P_i = G_{11}e_i (G_{11}e_i)^T\) with \(e_i\) being the \(i\)'th unit vector, that is, \(G_{11}e_i\) is the \(i\)'th column of \(G_{11}\). The following result is a consequence of Theorems \ref{thm: true_rank}, \ref{thm: over_rank} and \ref{thm: under_rank}.

\begin{theorem} \label{thm: asym_weight}
    Assume that \(1 \le r = \textnormal{rank}(\Gamma) \le p\) and \(\lambda_1 > \cdots >\lambda_{r}\). Let \((w_T)_{T\in\mathbb{N}}\subset [0, 1]^p\) be a sequence of weights. Assume that \(w_{T, i}\rightarrow_p 1\) for all \(i=1,..., r\). Then, for \(T\rightarrow\infty\),
    \[
    \hat{\Gamma}_{w_T}\rightarrow_p \Gamma.
    \]
    Assume furthermore that \(T||(w_T - w)||\rightarrow_p 0\) for some \(w\in[0, 1]^p\) and \(\kappa_{ijkl}=0\) for all \(1\le i,j,k,l\le p\). Let \(V_w^T = (\tilde{V}_{w, 11}^T,  \tilde{V}_{w, 21}^T)\) be a random matrix such that \(\textnormal{vec}(V_w)\sim \mathcal{N}(0, \xi_w\Xi \xi_w^T)\), where $\Xi$ is defined in \eqref{eq: Xi} and \(\xi_w\) is defined above. Then,
    \begin{equation} \label{eq: asym_weight}
    \begin{pmatrix}
        T^{\frac{1}{2}}(\hat{\Gamma}_{w_T}^{11} - \Gamma_{11} - b_w) \, & \, T(\hat{\Gamma}_{w_T}^{12} - \Gamma_{12}) \\
        T^{\frac{1}{2}}(\hat{\Gamma}_{w_T}^{21} - \Gamma_{21}) \, & \, T(\hat{\Gamma}_{w_T}^{22} - \Gamma_{22})
    \end{pmatrix} \rightarrow_w 
    \begin{pmatrix}
        V_{w, 11} \, & \, C_{1w}\tilde{J}_{12}B^{-1} + \tilde{J}_{22}C_{2w}\\
        V_{w, 21} \, & \, J_{22}C_{2w}
    \end{pmatrix}.
    \end{equation}
\end{theorem}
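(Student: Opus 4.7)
The plan is to treat Theorem \ref{thm: asym_weight} as the natural extension of Theorems \ref{thm: true_rank}, \ref{thm: over_rank}, and \ref{thm: under_rank} to continuous weights, exploiting the linearity $\hat{\Gamma}_{w_T} = \sum_{i=1}^p w_{T,i}\, S_{\Delta X X}\hat{g}_i \hat{g}_i^T$ and splitting the sum at the stationary/random-walk boundary $i = r$. Each rank-one piece $S_{\Delta X X}\hat{g}_i \hat{g}_i^T$ is $O_P(1)$ in the top-left block and $O_P(T^{-1})$ in the remaining blocks, as established in the proofs of the three preceding theorems. Consistency then follows immediately: when $w_{T,i}\rightarrow_p 1$ for $i\le r$, the stationary part $\sum_{i=1}^r w_{T,i}S_{\Delta X X}\hat{g}_i \hat{g}_i^T$ converges in probability to $S_{\Delta X X}\sum_{i=1}^r \hat{g}_i \hat{g}_i^T = \hat{\Gamma}_r \rightarrow_p \Gamma$ by Theorem \ref{thm: true_rank}, while the random-walk part is a bounded linear combination of $O_P(T^{-1})$ terms and therefore vanishes.

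For the CLT, I would first use the rate $T\|w_T - w\|\rightarrow_p 0$ to replace $w_{T,i}$ by $w_i$: because each rank-one piece is at most $O_P(1)$ in every block, the replacement error is of order $\|w_T - w\| = o_P(T^{-1})$ in Frobenius norm, which is negligible compared to the CLT rates $T^{-1/2}$ and $T^{-1}$ appearing in \eqref{eq: asym_weight}. It therefore suffices to prove the theorem for fixed weights $w$.

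For the stationary sum $\sum_{i=1}^r w_i\, S_{\Delta X X}\hat{g}_i \hat{g}_i^T$, I would redo the delta-method argument of Theorem \ref{thm: under_rank} with the indicator $\mathbf{1}_{\{i\le m\}}$ replaced by the general weight $w_i$. The map $(\hat{g}_1,\ldots,\hat{g}_r)\mapsto \sum_{i=1}^r w_i \hat{g}_i \hat{g}_i^T$ is linear in the rank-one projectors, so the linearization produces a weighted linear combination of the per-eigenvector derivatives $\xi_i$ from \eqref{eq: xi_i}, which is exactly the $\xi_w$ in the theorem statement. The deterministic leading term evaluates to $\beta^T\!\alpha\Sigma_X^{11}G_{11}D_1 G_{11}^T$, and subtracting $\Gamma_{11}=\beta^T\!\alpha\Sigma_X^{11}G_{11}G_{11}^T$ yields the bias $b_w$. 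The $C_{1w}$ factor multiplying $\tilde{J}_{12}B^{-1}$ in the top-right block arises from the same computation, since the coefficient in front of $\tilde{J}_{12}B^{-1}$ in the proof of Theorem \ref{thm: under_rank} is $\beta^T\!\alpha\Sigma_X^{11}G_{11}^{:m}(G_{11}^{:m})^T(\beta^T\!\alpha)^{-1}$, which becomes $C_{1w}$ once $G_{11}^{:m}(G_{11}^{:m})^T$ is replaced by the weighted counterpart $G_{11}D_1 G_{11}^T$. For the random-walk sum $\sum_{i=r+1}^p w_i\, S_{\Delta X X}\hat{g}_i \hat{g}_i^T$, I would re-run the argument of Theorem \ref{thm: over_rank} with $\hat{G}_{22}^{:m}(\hat{G}_{22}^{:m})^T$ replaced by $\hat{G}_{22}D_2 \hat{G}_{22}^T$, which converges weakly to $C_{2w}=G_{22}D_2 G_{22}^T$ jointly with the cross-covariance limits from Lemma \ref{lemma: cross}, producing the bottom-right limit $J_{22}C_{2w}$ and the top-right contribution $\tilde{J}_{22}C_{2w}$.

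The main obstacle is tracking the joint weak convergence of the stationary and random-walk pieces at their different scalings ($T^{-1/2}$ and $T^{-1}$) and verifying that they combine additively in the top-right block as $C_{1w}\tilde{J}_{12}B^{-1} + \tilde{J}_{22}C_{2w}$. This should follow from the joint convergence assembled in Lemma \ref{lemma: cross} together with the delta-method expansions used for Theorems \ref{thm: over_rank} and \ref{thm: under_rank}, but some care is needed to align the two decompositions so that the per-eigenvector terms from the two regimes add coherently without introducing unwanted cross-terms between the stationary eigenvectors $\hat{g}_1,\ldots,\hat{g}_r$ and the random-walk eigenvectors $\hat{g}_{r+1},\ldots,\hat{g}_p$ at the relevant rate.
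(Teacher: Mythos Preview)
Your proposal is correct, and the consistency argument is essentially the same as the paper's. For the CLT, however, you take a somewhat different route than the paper. You work with the rank-one decomposition $\hat{\Gamma}_{w_T}=\sum_{i=1}^p w_{T,i}\,S_{\Delta X X}\hat g_i\hat g_i^T$, replace $w_T$ by $w$ using the rate assumption, and then re-run the internal arguments of Theorems \ref{thm: over_rank} and \ref{thm: under_rank} with continuous weights, splitting the sum at $i=r$. The paper instead exploits the dual representation $\hat{\Gamma}_{w_T}=\sum_{k=0}^p W_{T,k}\hat{\Gamma}_k$ with $W_{T,k}=w_{T,k+1}-w_{T,k}$, so that Theorems \ref{thm: true_rank}, \ref{thm: over_rank}, and \ref{thm: under_rank} can be applied verbatim to each summand $\hat{\Gamma}_k$; the result then follows from the continuous mapping theorem and a final algebraic rewriting of $\sum_k W_k Z_k$ into the block form \eqref{eq: asym_weight}. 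The paper's approach is shorter and avoids reopening the delta-method computations, while yours is more explicit about how each piece of the limit ($b_w$, $C_{1w}$, $C_{2w}$, $\xi_w$) arises from the individual eigenvector contributions. Your worry about joint convergence and the absence of cross-terms is legitimate but is resolved in the same way in both approaches: all limits derive from the joint convergence in Lemma \ref{lemma: cross}, so the continuous mapping theorem applies to the combined vector of rescaled blocks.
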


It follows from Theorem \ref{thm: asym_weight} that if a sequence of weights, \((w_T)_{T\in\mathbb{N}}\), is such that \(T(w_{T, i} - \mathbf{1}_{\{i\le m\}}) \rightarrow_p 0\) for some \(1\le m\le p\), then \(\hat{\Gamma}_{w_T}\) is asymptotically equivalent to \(\hat{\Gamma}_m\). In particular,  for the post-selection estimator \eqref{eq: rank_est} with weights \eqref{eq: w hat}, for every \(\epsilon > 0\), we have
\begin{align*}
    \mathbb{P}\left(T|\hat{w}_i - \mathbf{1}_{\{i\le r\}}| > \epsilon\right)
        & \le \mathbb{P}\left(\hat{r}\neq r\right) \\
        & = \mathbb{P}\left(\hat{r} > r\right) + \mathbb{P}\left(\hat{r} < r\right) \\
        & \le \sum_{i=0}^{r-1}\mathbb{P}\left(lr_i \le c_{T, i}\right) + \mathbb{P}\left(lr_r > c_{T, r}\right)
\end{align*}
where the last term goes to \(0\) for \(T\rightarrow\infty\) if \(c_{T}\) is chosen appropriately, since \(lr_i\) goes to infinity for \(i<r\) and \(lr_r\) converges in distribution and is therefore bounded in probability. Consequently, the post-selection estimator is asymptotically equivalent to \(\hat{\Pi}_r\).\footnote{This is only true in a pointwise sense. Indeed, the situation is very different if one considers sequences of parameters \(\Pi_T\) with eigenvalues getting arbitrarily close to \(0\). See, for example, the discussion in \cite{elliott1998robustness} or the simulations in the following section. For uniform asymptotic inference in this setting, see \cite{holberg2023uniform}.} In finite samples, however, the situation can be different. It is entirely possible that \(lr_i\) is close to \(c_{T, i}\) for multiple \(i=0,...,r\), which would indicate that there is evidence for multiple ranks in the observed data. Hard threshold weights like \(\hat{w}\) disregard this uncertainty and for some samples the choice of rank can be far from the true rank (see Appendix \ref{app: rank_bias}). It might therefore be wise to explore weights that behave more smoothly. We here give two examples of such weights both of which are based on the likelihood-ratio test statistics, \(lr\). 

\begin{example} \label{ex: w_1}
    The first weight-vector we consider is motivated by the fact that large values of \(lr_i\) are strong evidence that the eigenvector \(\hat{g}_{i+1}\) should be included. Indeed, as stated above, if \(i<r\), then \(T^{-a}lr_i\) goes to infinity for any \(a\in[0, 1)\). It is similar to the weighting scheme considered in \cite{lieb2017inference}. For \(a_1 > 0\) and \(0\le a_2\) we define
    \begin{equation} \label{eq: w_1}
        \hat{w}_1(a_1, a_2) = \left(1 - e^{-a_1 T^{-a_2}lr_0},..., 1 - e^{-a_1 T^{-a_2}lr_{p-1}}\right). 
    \end{equation}
    When convenient we shall omit the arguments and simply write \(\hat{w}_1\). The hyperparameters \(a_1\) and \(a_2\) control how sensitive the weights are to the size of \(\hat{\lambda}\). If \(1 > a_2 > 0\), then \(T^{-a_2}lr_i \rightarrow_p \infty\) for \(i<r\) and \(T^{-a_2}lr_i \rightarrow_p 0\) for \(i\ge r\) which implies that \(T(\hat{w}_1 - \mathbf{1}_{\{\cdot \le r\}})\rightarrow_p 0\) for \(T\rightarrow \infty\), i.e., \(\hat{\Pi}_{\hat{w}_1}\) is asymptotically equivalent to \(\hat{\Pi}_r\).
\end{example}

\begin{example} \label{ex: w_2}
    The second example is a soft threshold version of the categorical \(\hat{w}\). We simply replace the indicator function in \(\hat{w}_i = \mathbf{1}{\{lr_{i-1} > c_{T, i}\}}\) with a sigmoid function. Specifically, let \(\tau:\mathbb{R}\rightarrow [0, 1]\) be a sigmoid function, i.e., monotone and differentiable with \(\tau(0)=0.5\), \(\tau(-\infty) = 0\), and \(\tau(\infty) = 1\), and define for \(a > 0\) and \(c=(c_{0},..., c_{p-1})\) the weights
    \begin{equation} \label{eq: w_2}
        \hat{w}_2(a, c) = \left(\tau\left(a(lr_0 - c_{0})\right),..., \tau\left(a(lr_{p-1} - c_{p-1})\right)\right).
    \end{equation}
    Similar to above, we will sometimes omit the arguments for notational simplicity. In most applications it would make sense to just choose \(c_i\) to be the \((1-\alpha)100\%\) quantile for the asymptotic distribution of the test statistic \(lr_i\) for some prespecified signifcance level \(\alpha\in(0, 1)\) in which case we shall write \(\hat{w}_2(a, \alpha)\). The hyperparameter \(a\) controls the gradient of the sigmoid function with higher values resulting in a sharper separation. One can choose \(a, c\) dependent on \(T\) such that \(a(lr_i - c_{i})\rightarrow_p \infty\) if \(i < r\) and \(a(lr_i - c_ i)\rightarrow_p -\infty\) if \(i \ge r\) in which case \(\hat{\Pi}_{\hat{w}_2}\) is also asymptotically equivalent to \(\hat{\Pi}_r\). This weight works well for moderate dimensions. Indeed, for very high \(p\), choosing the appropriate vector \(c\in\mathbb{R}^p\) becomes prohibitive.
\end{example}

\section{Simulation study}
\label{sec: simul}

In this section we perform two sets of simulation studies. First we compare different weighted reduced rank estimators across a range of parameters. The second set of simulation experiments compares the empirical large-sample distribution of our estimators with the asymptotic distributions derived in Section \ref{sec: asymp}. This will not only confirm our results, but also give an idea of how the distributions in (\ref{eq: asym_true}), (\ref{eq: asym_over}), and (\ref{eq: asym_under}) behave which is useful especially for the last case because of its complicated nature. Details about the different simulation setups are given in Appendix \ref{app: sim}.

\subsection{Comparison of weighted reduced rank estimators}

We compare different weighted reduced rank estimators for a handful of configurations. We compare 4 different types of weights. The first type is given by \(w_f(k)\in[0, 1]^p\) with \((w_f(k))_i = 1\) for \(i\le k\) and 0 otherwise. \(w_f\) does not depend on the observed data but simply chooses a fixed number of eigenvectors to include. It thus corresponds to the simple reduced rank estimators of fixed rank, i.e., \(\hat{\Gamma}_{w_f(k)} = \hat{\Gamma}_k\). The second type is the post-selection weight \(\hat{w}= w_f(\hat{r})\) in  \eqref{eq: w hat}. The last two types are \(\hat{w}_1\) and \(\hat{w}_2\) in eqs. \eqref{eq: w_1} and \eqref{eq: w_2} for different values of hyperparameters. We consider \(w_2(a, \alpha)\) as a function of the first parameter \(a\) and a significance level \(\alpha\in(0, 1)\) as explained in Example \ref{ex: w_2}. We used the error function as the sigmoid function, i.e., 
$$
\tau(x)=\frac{\text{erf}(x) + 1}{2}, \quad \text{erf}(x)=\frac{2}{\sqrt{\pi}}\int_0^x e^{-t^2}dt.
$$

We are interested in settings where there might be uncertainty regarding the choice of rank. This simulation experiment therefore considers sequences of parameters \(\Gamma_c\in\mathbb{R}^{p\times p}\) for which a third of the eigenvalues are stationary (fixed at \(-3/2\)), a third is exactly 0, and the last third is given by \(-c/T\) for varying \(c\ge0\). Throughout we fix \(T=100\). For \(p=3,6,9\) and \(c\in[0, 30]\) we then compare all the estimators based on the mean squared prediction error (MSPE), which, for an estimator \(\hat{\Gamma}\), is given by \(T\mathbb{E}||\Delta X_{T+1} - \hat{\Gamma}X_T||^2\). For a detailed description, see Appendix \ref{app: sim_est}. The results over 4 million simulations are given in Figure \ref{fig: mse_3}. The lines in the figure are smooth versions of the actual results reported in Appendix \ref{app: sim}.

\begin{figure}
    \centering
    \includegraphics[width=1\textwidth]{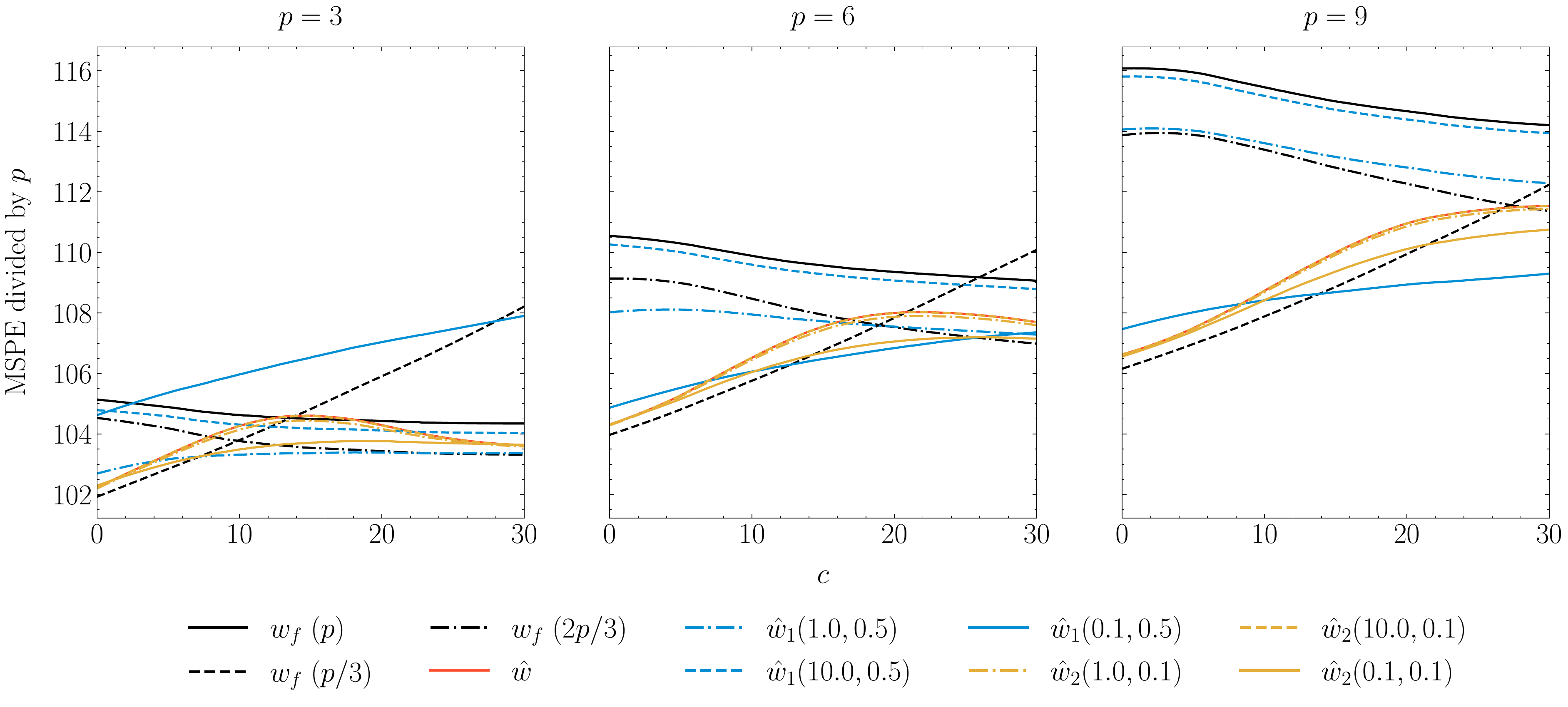}
    \caption{\small Mean square prediction error (MSPE) of different weighted reduced rank estimators for varying dimensions and \(c\in[0, 30]\) where the underlying autoregressive matrix, \(\Gamma_c\), has a third of its eigenvalues set to \(-c/T\), a third set to 0 and a third set to \(-3/2\). Sample size is fixed at \(T=100\). The lines have been smoothed out for better comprehension. See Figure \ref{fig: mse_3_jag} for the true graphs.}
    \label{fig: mse_3}
\end{figure}

For the parameters considered here, the cointegration rank will always be less than two-thirds of the dimension \(p\) and, for \(c\) close to 0, it will be practically \(p/3\). Thus, the least squares estimator is really overparameterized which results in a higher MSPE in almost all cases. This is in line with the asymptotic theory developed in \ref{sec: asymp}. Interestingly, the least squares estimator appears to briefly outperform the post-selection estimator for \(p=3\) and \(c\in[13, 17]\). This can attributed to the additional variance stemming from the rank selection. Also, for \(c\) small enough, the reduced rank estimator of rank \(p/3\) outperforms the reduced rank estimator of rank \(2p/3\) (the dashed and dash-dotted black lines). Thus, choosing a rank smaller than the true rank is beneficial if the discarded eigenvectors have eigenvalues close enough to \(0\). At some point, however, \(c\) is too large and the bias induced by discarding these eigenvector will grow correspondingly at which point the reduced rank estimator of rank \(2p/3\) is preferable (around \(c=14, 19, 27\) from left to right). All the data-dependent weights are attempting to detect this point and act accordingly. For the estimators based on \(\hat{w}_1\) it seems as though the MSPE is shifted depending on the dimension. In higher dimension \(\hat{w}_1(0.1, 0.5)\) is a good choice while \(\hat{w}_1(1, 0.5)\) outperforms the other estimators most of the time for \(d=3\). Similarly, smoothing the rank-selection weights increases the predictive performance of the estimator. Indeed, the weighted reduced rank estimator with weights \(\hat{w}_2(0.1, 0.1)\) clearly outperforms the post-selection estimator in all cases. In Figures \ref{fig: w_mean_3} and \ref{fig: w_std_3} in Appendix \ref{app: sim_est} we have plotted the mean and standard deviation of all the weights across all simulations for \(p=3\) which potentially explains a lot of the differences in performance. Similar behaviour holds for \(p=6\) and \(p=9\).

\subsection{Comparison of asymptotic and empirical distributions}

We consider estimators of \(\Gamma\), comparing each block separately. The dimension is $p=4$, the true rank is chosen to be \(r=2\) and we consider the estimators \(\hat{\Gamma}_1\), \(\hat{\Gamma}_2\), and \(\hat{\Gamma}_4\) corresponding to the three cases of underestimated, correct and overestimated rank. We let \(Z_t\) be i.i.d. normal with \(Z_t\sim\mathcal{N}(0, \Sigma_Z)\). We generate \(\alpha, \beta\in\mathbb{R}^{4\times 2}\) and \(\Sigma_Z\in\mathbb{R}^{4\times 4}\) such that Assumptions \ref{as: roots}, \ref{as: orth}, and \ref{as: eig} are fulfilled. For explicit details on the simulation setup, see Appendix \ref{app: sim}. 

In Fig. \ref{fig: dist} we compare the empirical distributions and the asymptotic distributions of the three estimators. That is, we compare the distributions on the left-hand sides to the right-hand sides of (\ref{eq: asym_true}), (\ref{eq: asym_over}), and (\ref{eq: asym_under}). For the estimators under underestimated rank, we also subtracted the bias in eq. \eqref{eq: b}, which is why it appears to be centered. Observe that the estimators for the true rank (red lines) are not visible in most plots because they overlap with the other lines. This agrees with the theory. For the two leftmost columns, the distribution of \(\hat{\Gamma}_2^{\cdot 1}\) coincides with the distribution of \(\hat{\Gamma}_4^{\cdot 1}\). For the bottom-right block, \(\hat{\Gamma}_2^{22}\) and \(\hat{\Gamma}_1^{22}\) are both singular around 0 which is why the empirical distributions are highly concentrated compared to the empirical distribution of \(\hat{\Gamma}_4\).

\begin{figure}
    \centering
    \includegraphics[width=1\textwidth]{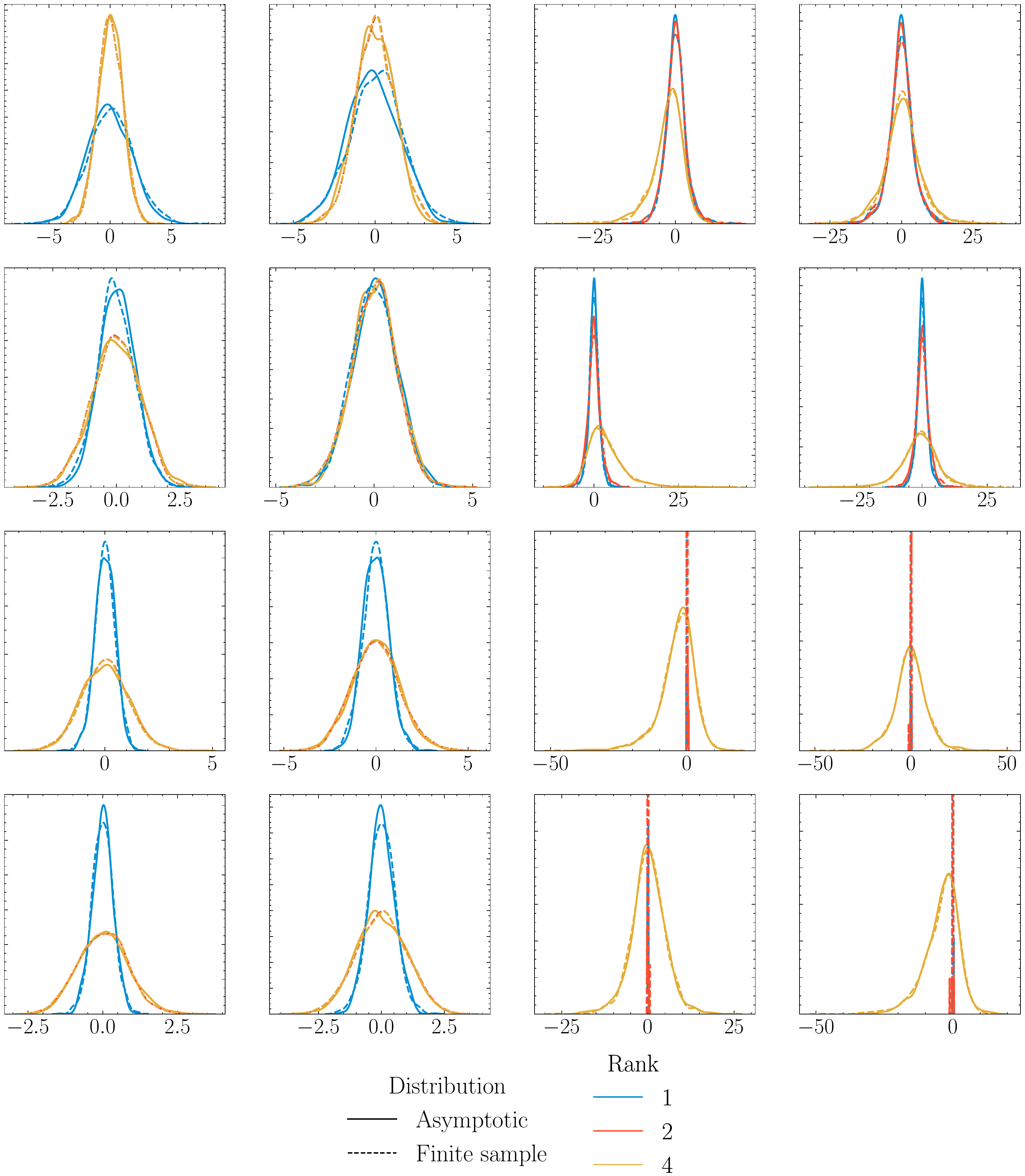}
    \caption{\small Asymptotic and empirical distributions of \(\hat{\Gamma}_k - \Gamma\) for different choices of \(k\). The dimension is $p=4$, the true rank is \(r=2\) and the three estimators are \(\hat{\Gamma}_1\), \(\hat{\Gamma}_2\), and \(\hat{\Gamma}_4\). For each estimator, the dotted line is the empirical distribution for \(T=5000\) and over 1000 simulations. The \(i,j\)'th plot corresponds to the distribution of the \(i,j\)'th element of \(\hat{\Gamma}_k - \Gamma\) . We centered \(\hat{\Gamma}_1-\Gamma\) by subtracting the bias given in the right hand side of \eqref{eq: b} in Section \ref{sec: asymp}. Note different scales in individual plots.}
    \label{fig: dist}
\end{figure}

For all three estimators, the large-sample empirical distribution is close to the asymptotic distribution which confirms our theoretical findings. Furthermore, as we hypothesised, the variance of \(\hat{\Gamma}_1\) is decreased in all blocks except the top-left block. The decrease seems to be most visible in the bottom-left block. This is surprisingly not the case when we compare \(\hat{\Gamma}_1^{11}\) with \(\hat{\Gamma}_2^{11}\). It looks like the former has a higher variance in some of the elements. In other simulations the results were also ambiguous making any quantitative judgements hard. It should be noted, however, that in Fig. \ref{fig: dist} the distribution of each element of the estimated matrix is plotted separately, i.e., we do not consider the covariance structure between different elements of the matrix.

\section{Prediction of EEG Signals}
\label{sec: eeg}

We apply our weighted reduced rank estimators to EEG recordings obtained from an experiment in which two participants were presented with a visual stimulus on a computer screen. Each participant was first shown a cross on the screen on which to focus for a random fixation period between 1.5 and 2.5 seconds. Then, two figures would briefly appear on the screen and the participant should indicate which stimulus had been shown. For more information on the exact setup, see \citet{levakova2022classification}. Here, we analyze the trials from participant 1 which, after data clean-up, amount to 609 trials in total with a sampling rate of 256 Hz. For each trial we only consider the period one second prior to the onset of the visual stimulation. The psychological hypothesis is that the brain state at stimulus onset is predictive of cognitive performance, and this short pre-stimulus period is therefore of special interest. After data preprocessing and clean-up, each EEG signal consists of 59 channels. The resulting data set has 609 observations of 59-dimensional time series of sample size 257. We represent the data by \(X^i_t\in\mathbb{R}^{59}\) where \(i=1,\dots, 609\) and \(t=0,1,\dots, 256\). See Fig. \ref{fig: eeg_sig} for a sample of \(X^i\).

\begin{figure}
    \centering
    \includegraphics[width=.8\textwidth]{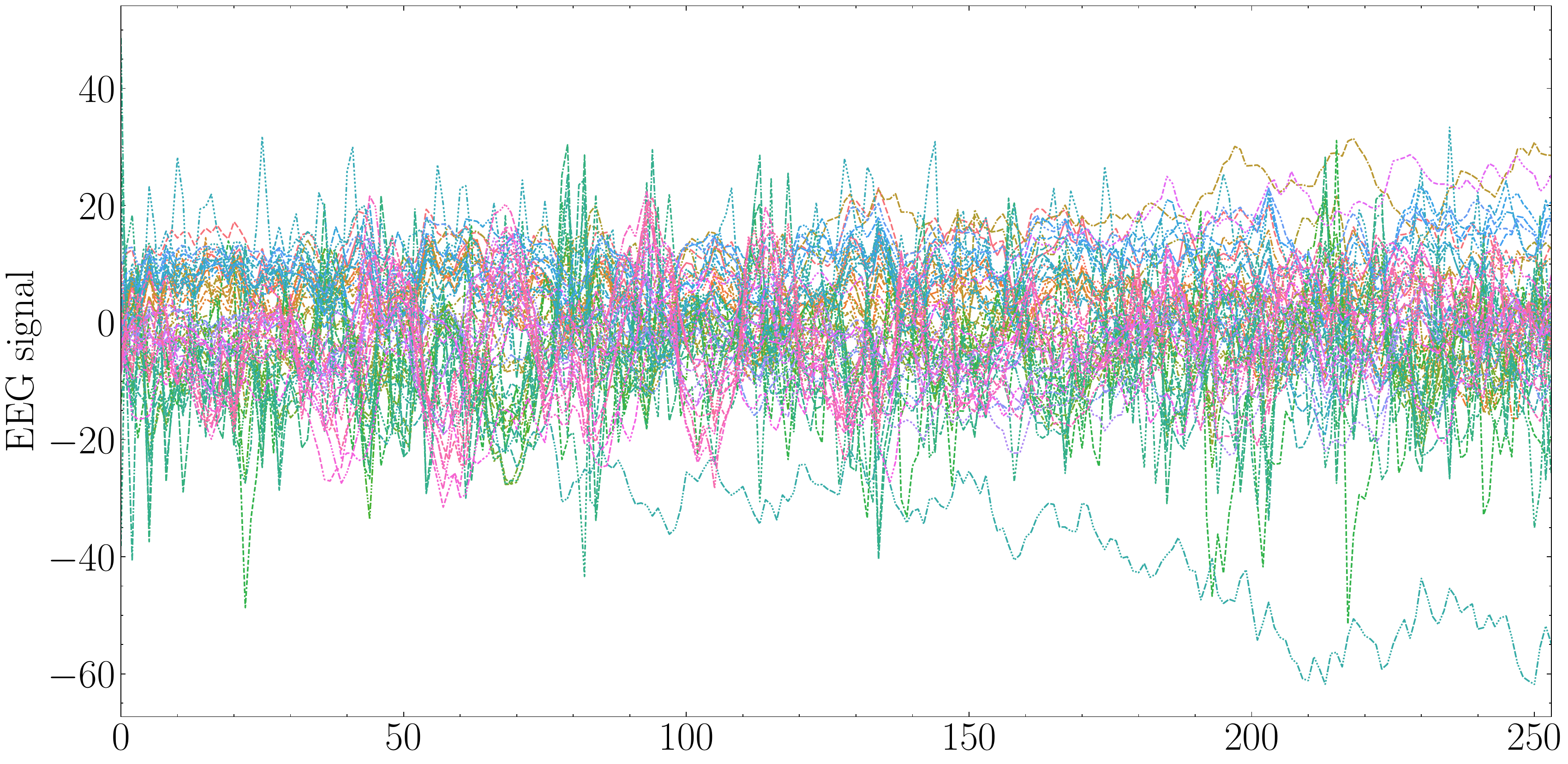}
    \caption{\small Plot of a sample of 59 EEG channels from participant 1 ranging over a second prior to stimulation onset and sampled at 256 Hz.}
    \label{fig: eeg_sig}
\end{figure}

We analyze the predictive capabilities of the weighted reduced rank estimators for two classes of weights on the given data. We consider the discrete weights given by \(w_f(k)\), \(k=0,\dots, 59\) as well as the smooth weights \(\hat{w}_1(1, a_2)\) for \(a_2=k/25\), \(k=0,\dots, 49\). The high dimension of the data makes any classical methods of rank selection as well as methods based on bootstrap prohibitive. Similarly, the weights given in Example \ref{ex: w_2} are not well suited for problems in higher dimension due to the need to select the thresholds \(c\in\mathbb{R}^d\). The methods proposed so far in this paper are in the setting of a single observation of a long time series and under the assumption of zero drift. They are, however, straightforwardly adapted to work in settings with multiple i.i.d. observations of the same time series and to allow for the inclusion of a constant drift \citep[Section 2]{levakova2022classification}.

In Fig. \ref{fig: eeg_w_mspe} we record the performance of each \(w_f(k)\) and \(\hat{w}_1(1, a_2)\) in terms of MSPE. For a test/train split \(I_{train}, I_{test}\subset \{1, \dots, 609\}\) with \(I_{train}\cap I_{test}=\emptyset\), the model was fitted on the train set \((X^i)_{i\in I_{train}}\) and the MSPE calculated on the test set \((X^i)_{i\in I_{test}}\). We compare the estimators for three different sample sizes of the training data. A train size of \(q\) and test size of \(p\) means that \(|I_{train}|=\lfloor q 609\rfloor\) and \(|I_{test}|=\lfloor p 609\rfloor\). Throughout we fix the test size at \(0.1\), i.e., 10\% of the observations are used to compute the MSPE. The results reported in Fig. \ref{fig: eeg_w_mspe} are averaged across 40 random test/train splits of the data for train sizes 0.1, 0.2 and 0.3.

\begin{figure}
    \centering
    \includegraphics[width=0.8\textwidth]{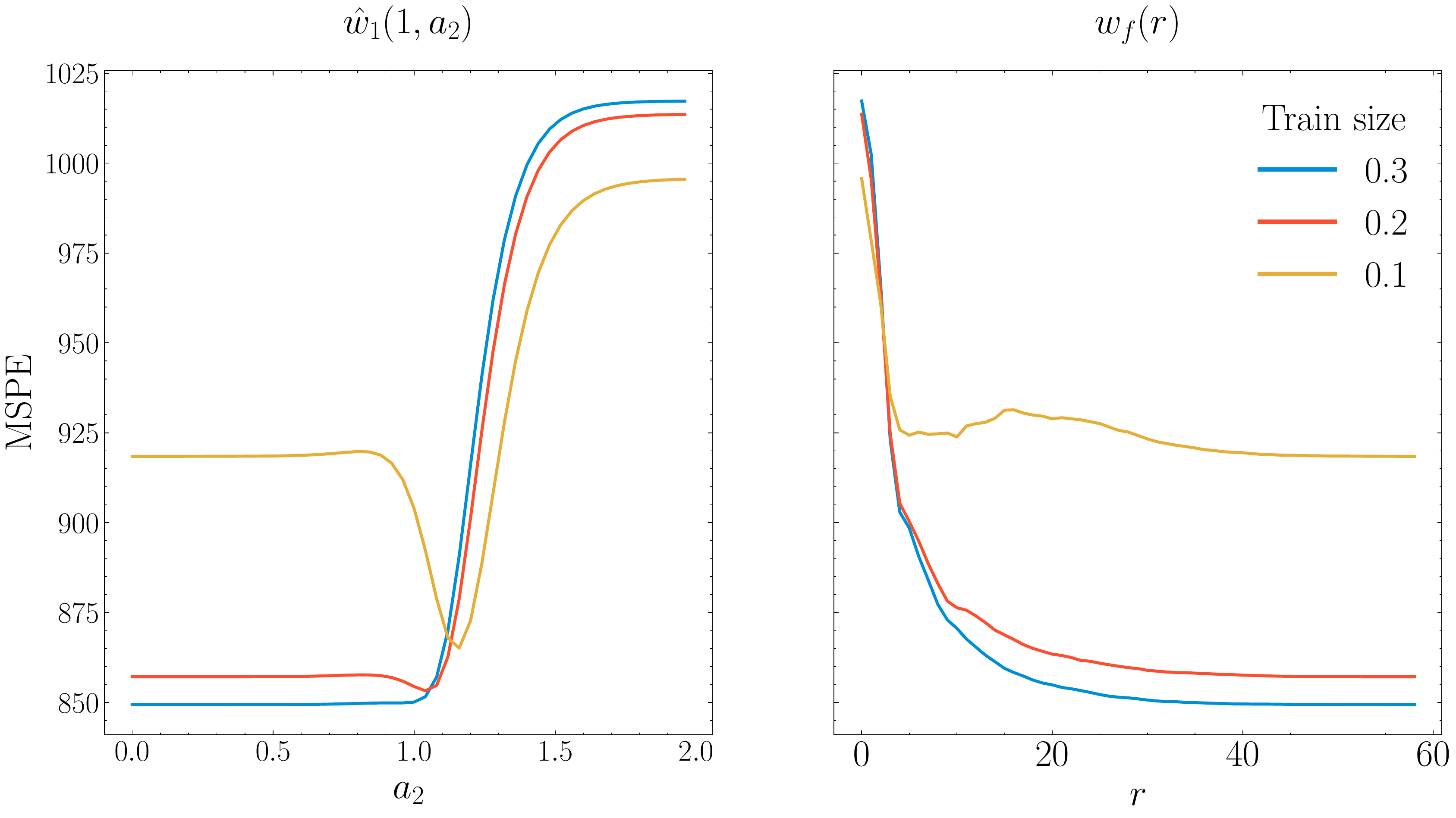}
    \caption{\small Average mean square prediction error (MSPE) for chosen reduced rank estimators over 40 random test/train splits of the data for three different choices of train size. In each case the test size is fixed at 0.1.}
    \label{fig: eeg_w_mspe}
\end{figure}

Evidently, for both choices of weights, the fixed rank and the smoothed weights, the hyperparameter, $r$ or $a_2$, strongly affects the performance of the corresponding estimator. A similar pattern emerges in the left and right panel of Fig. \ref{fig: eeg_w_mspe}. At certain thresholds the predictive capabilities plateau around the same level, namely, for \(a_2\le 0.9\) and \(r\ge 25\). One way to interpret this is that, after a while, increasing the rank of the estimator does not yield better results, i.e., we lose nothing by using a lower rank representation of the underlying dynamics. Similarly, for \(a_2\ge 1.5\), \(\hat{w}_1(1, a_2)\) is practically 0 so that the MSPE in the left panel plateaus at the same level as the MSPE for \(w_f(0)\). Interestingly, whereas the MSPE in the right panel seems to be almost monotone in the hyperparameter, this is not the case in the left panel. Especially for the smallest train size, the MSPE of \(\hat{w}_1\) dips well below the lowest level achieved by \(w_f\). Thus, for small sample sizes, the new estimator with smooth weights performs better. In practice, we do not know the optimal choice of \(a_2\) or \(k\), but this can be partly resolved by cross-validation. Fig. \ref{fig: eeg_bp} depicts the distribution of the MSPE corresponding to the reduced rank estimators with weights \(w_f(\hat{k}_{cv})\) and \(\hat{w}_1(1, \hat{a}_{2, cv})\) where \(\hat{k}_{cv}\) and \(\hat{a}_{2, cv}\) were chosen to yield the lowest MSPE based on cross-validation on the training data with 10 folds.

\begin{figure}
    \centering
    \includegraphics[width=.7\textwidth]{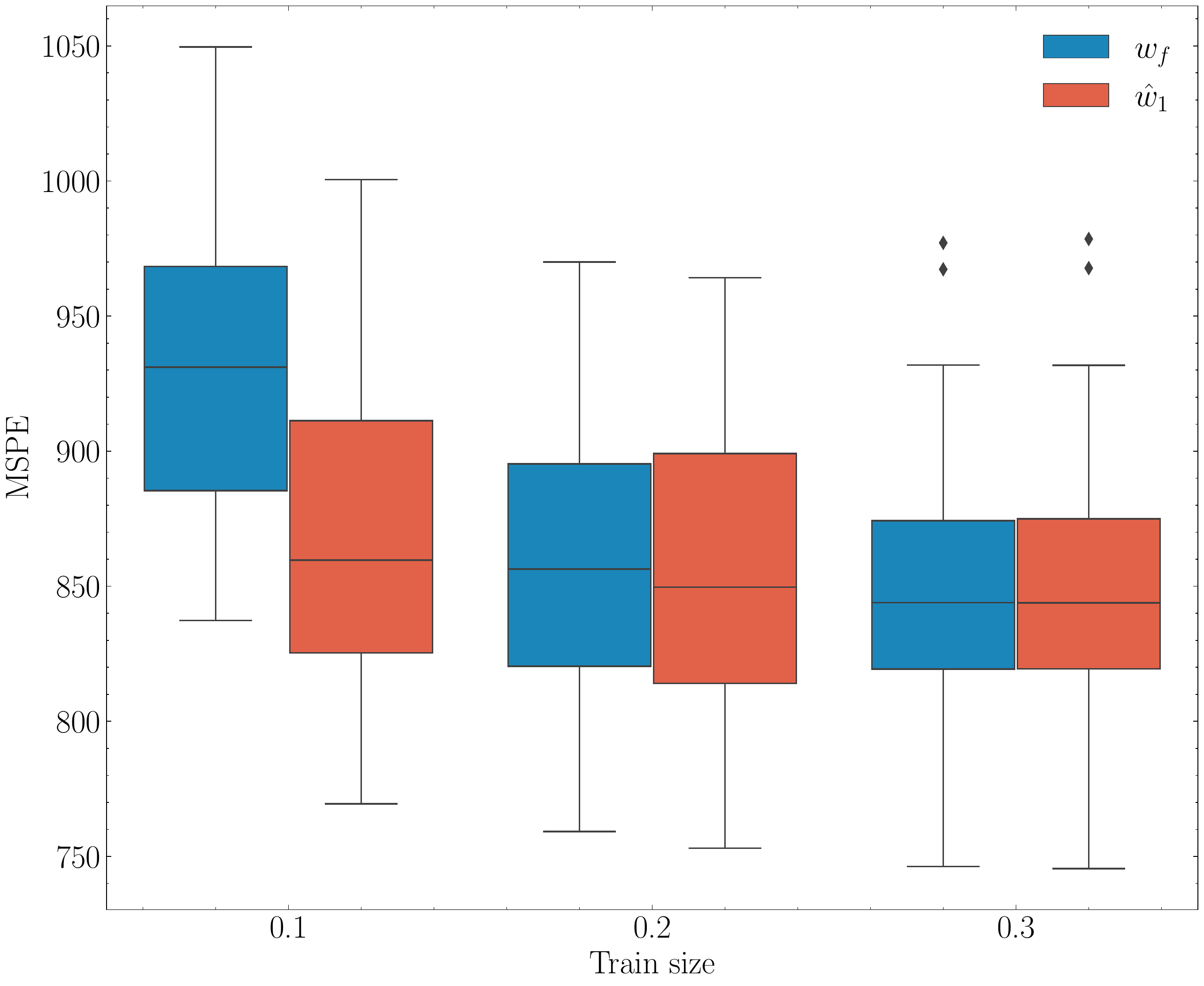}
    \caption{\small Distribution of the mean square prediction error  (MSPE) corresponding to the estimators with weights \(w_f(\hat{k}_{cv})\) and \(\hat{w}_1(1, \hat{a}_{2, cv})\). For each split, the hyperparameters $\hat{k}_{cv}$ and $\hat{a}_{2, cv}$ were chosen by cross-validation on the training data.}
    \label{fig: eeg_bp}
\end{figure}

\begin{figure}
    \centering
    \includegraphics[width=.8\textwidth]{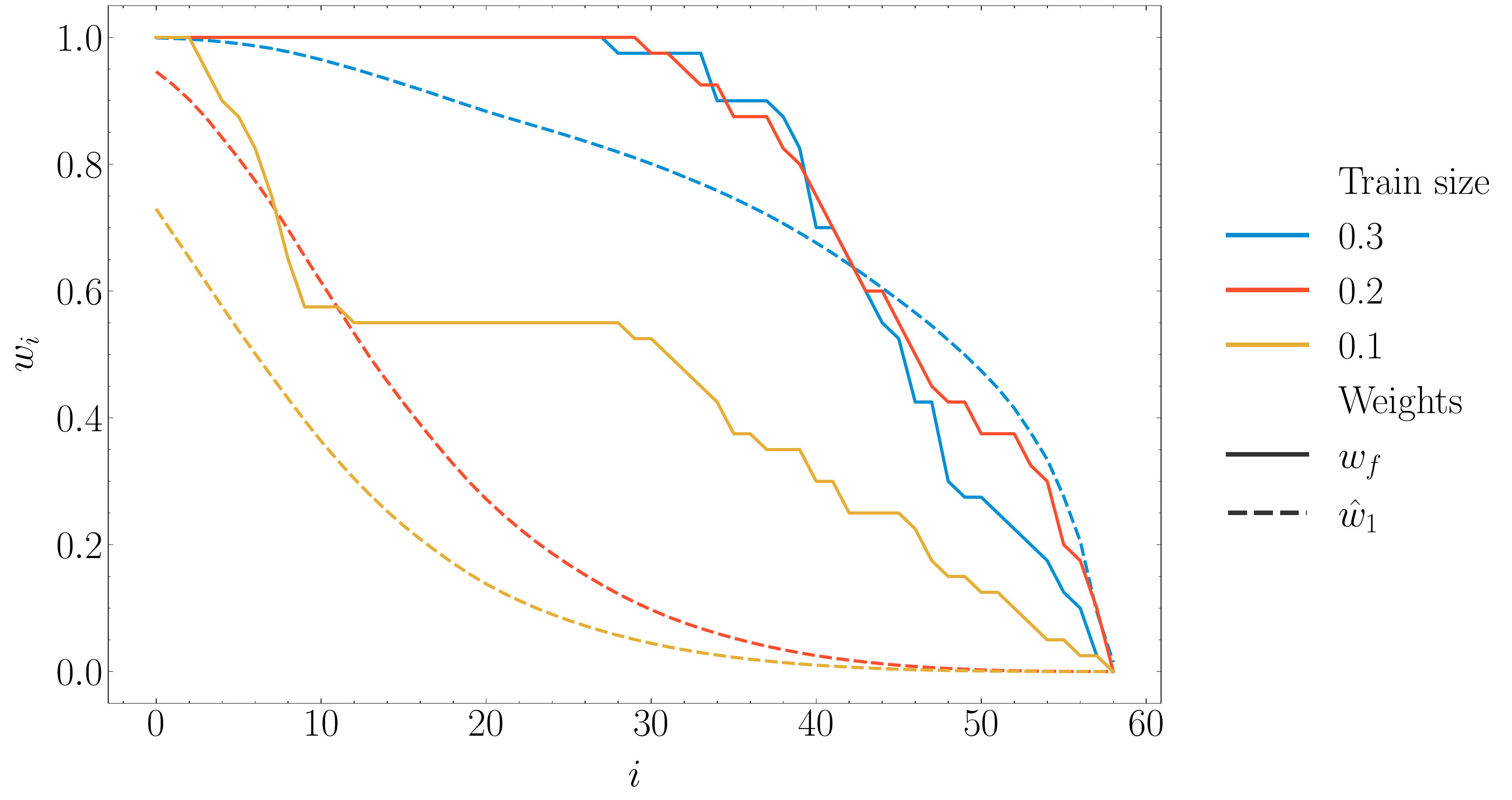}
    \caption{\small Average values of the weights \(w_f(\hat{k}_{cv})_i\) for the classical fixed rank estimator (i.e., weights are 1 for $i \leq \hat{k}_{cv}$ and 0 otherwise) and \(\hat{w}_1(1, \hat{a}_{2, cv})_i\) for the new proposed weighted rank estimator, \(i=1,\dots, 59\), across 40 test/train splits for different choices of train size. For each split, the hyperparameters $\hat{k}_{cv}$ and $\hat{a}_{2, cv}$ were chosen by cross-validation on the training data.}
    \label{fig: eeg_w}
\end{figure}

For train sizes 0.2 and 0.3 the two estimators seem to do equally well. This in line with the results in Fig. \ref{fig: eeg_w_mspe}. However, the situation changes for the smallest train size where the smooth weights clearly outperform the discrete weights. In Fig. \ref{fig: eeg_w} we can see that the weights behave differently. The smooth weights tend rather quickly to 0 for larger ranks as the sample size decreases, but the discrete weights are slower to react. In particular, for the smallest sample the chosen rank varies a lot based on the particular data split. This shows that smoothing the weights is beneficial in settings with large rank uncertainty (in this case because of the high dimension and the small sample size).

\section{Conclusion}
\label{sec: con}

We have characterised the asymptotic distribution of all reduced rank estimators of the \(\Pi\)-matrix in a VECM as given by (\ref{eq: vecm}) assuming the cointegration rank, \(r\), and the dimension, \(p\), are held fixed and the sample size \(T\rightarrow\infty\). Previously, only the asymptotic distribution of the reduced rank estimator with true rank has been studied. We showed that similar results hold if the rank is respectively overestimated or underestimated. In the first case, the estimator is still consistent albeit at the cost of an increased variance. In particular, the bottom-right block of (\ref{eq: asym_over}) no longer converges in probability to zero. This is to be expected since we are effectively including more parameters than necessary. In the second case, the estimator is asymptotically biased and the size of the bias is determined by how much the rank is underestimated. Simulation studies confirmed the theoretical findings.

We have introduced a new class of estimators that outperform the classical estimators in settings where certain eigenvalues close to zero. By choosing appropriate weights that take rank uncertainty into account, the weighted reduced rank estimators have several benefits. They are transparent regarding rank evidence, they have smaller mean square prediction error and the resulting estimators have less variance when compared to the post-selection estimator.

\section{Proofs}
\label{sec: proofs}

\begin{proof}[Proof of Theorem \ref{thm: true_rank}]
For ease of notation we write \(A:=S_{X \Delta X}(S_{\Delta X \Delta X})^{-1}S_{\Delta X X}\). From Lemma \ref{lemma: cross} we find that \(T^{-\frac{1}{2}}A_{12}\), \(T^{-\frac{1}{2}}A_{22}\), \(T^{-\frac{1}{2}}S_{XX}^{21}\), and \(T^{-\frac{1}{2}}S_{XX}^{12}\) are \(o_P(1)\). Since \(S^{11}_{XX}\) and \(T^{-1}S^{22}_{XX}\) are \(O_P(1)\), from (\ref{eq: eig2}) we get that \((\hat{G}_{11}, \hat{G}_{12})\) and \(T^{\frac{1}{2}}(\hat{G}_{21}, \hat{G}_{22})\) are bounded in probability for all \(j=1,...,p\). It follows that \(A_{12}\hat{G}_{21}\), \(A_{22}\hat{G}_{21}\), and \(S^{12}_{XX}\hat{G}_{21}\) are \(o_P(1)\). Finally, note that \(\hat{\Lambda}_{11}\) defined in \eqref{eq: eig1} converges in probability to matrix \(\Lambda_{11}\) and \(\hat{\lambda}_i = O_P(T^{-1})\) for \(i=r+1,..., p\) (see e.g.~\cite{johansen1988statistical}).

Writing (\ref{eq: eig1}) in block matrix notation we then have for \(\hat{G}^{:r}\),
\begin{align}
    A_{11} \hat{G}_{11} + o_P(1) &= S_{XX}^{11} \hat{G}_{11} \hat{\Lambda}_{11} + o_P(1) \label{eq: g_1}\\
    A_{21} \hat{G}_{11} + o_P(1) &= (S_{XX}^{21}\hat{G}_{11} + T^{-1}S_{XX}^{22}T\hat{G}_{21})\hat{\Lambda}_{11}. \label{eq: g_2}
\end{align}
With \(H_1=\left(\frac{1}{T}S_{XX}^{22}\right)^{-1}\left(A_{21}(A_{11})^{-1}S_{XX}^{11} - S_{XX}^{21}\right)\) we compute \(T\hat{G}_{21} = H_1 \hat{G}_{11} + o_P(1)\) which, in particular, implies that \(T\hat{G}_{21}\) is bounded in probability and therefore \(\hat{G}_{21}^TS^{22}_{XX}\hat{G}_{21}\), \(\hat{G}_{21}^TS^{21}_{XX}\hat{G}_{11}\), and \(\hat{G}_{11}^TS^{12}_{XX}\hat{G}_{21}\) are all \(o_P(T^{-\frac{1}{2}})\). Applying (\ref{eq: eig2}) then yields
\[
\hat{G}_{11}^TS_{XX}^{11}\hat{G}_{11} = I_r + o_P(T^{-\frac{1}{2}}),
\]
i.e., \(\hat{G}_{11}\hat{G}_{11}^T = (S_{XX}^{11})^{-1} + o_P(T^{-\frac{1}{2}})\). Then we simply compute the estimator \(S_{\Delta X X}\hat{G}^{:r} (\hat{G}^{:r})^T\) in \eqref{eq: Gammahatk} using the block expressions derived above. This gives us
\begin{align*}
    \hat{\Gamma}_r^{11} &= S_{\Delta X X}^{11}\hat{G}_{11}\hat{G}_{11}^T + o_P(T^{-\frac{1}{2}}) = \beta^T\alpha + S_{UX}^{11}(S_{XX}^{11})^{-1} + o_P(T^{-\frac{1}{2}}) \\
    \hat{\Gamma}_r^{21} &= S_{\Delta X X}^{21}\hat{G}_{11}\hat{G}_{11}^T + o_P(T^{-\frac{1}{2}}) = S_{UX}^{21}(S_{XX}^{11})^{-1} + o_P(T^{-\frac{1}{2}})\\
    \hat{\Gamma}_r^{12} &= T^{-1}S_{\Delta X X}^{11}\hat{G}_{11}\hat{G}_{11}^T H_1^T + o_P(T^{-1}) = T^{-1}\beta^T\alpha H_1^T + o_P(T^{-1}) \\
    \hat{\Gamma}_r^{22} &= o_P(T^{-1})
\end{align*}
Appealing to Lemma \ref{lemma: cross} we see that \(H_1^T\rightarrow_w  (\beta^T\alpha)^{-1}(J_{12} - \Sigma_W^{12}(\Sigma_W^{22})^{-1}J_{22}) B^{-1}\) jointly with (\ref{eq: cross_1}), (\ref{eq: cross_2}), and (\ref{eq: cross_3}). The result of Theorem \ref{thm: true_rank} is then easily derived from the above expressions.
\end{proof}

\begin{proof}[Proof of Theorem \ref{thm: over_rank}]
The main ideas of this proof are similar to those of Theorem \ref{thm: true_rank} and we shall proceed in the same manner. Slightly abusing the notation used so far we let \(\hat{G}_{\cdot 2}=(\hat{G}_{12}^T, \hat{G}_{22}^T)^T\). Equation (\ref{eq: eig1}) translates to 
\begin{equation}
\label{eq: Agm}
A\hat{G}_{\cdot 2} = S_{XX}\hat{G}_{\cdot 2} \hat{\Lambda}_{22}^{:m:m}
\end{equation}
where \(\hat{\Lambda}_{22}^{:m:m}=\textnormal{diag}(\hat{\lambda}_{r+1}, ..., \hat{\lambda}_{r+m})\). Recall that \(\hat{\lambda}_i = O_P(T^{-1})\) for \(i=r+1,..., p\) so that \(\hat{\Lambda}_{22} = O_P(T^{-1})\). Now since \(\hat{G}_{\cdot2}=o_P(T^{-\frac{1}{4}})\) (see the comments made at the start of Section \ref{sec: asymp_over}) and \(S_{XX}^{11}, S_{XX}^{12}=O_p(1)\), it follows that \((S^{11}_{XX}, S^{12}_{XX})\hat{G}_{\cdot 2} \hat{\Lambda}_{22}^{:m:m} = o_P(T^{-1})\). In block matrix notation the top part of eq. \eqref{eq: Agm} simplifies to
\[
A_{11} \hat{G}_{12}^{:m} + A_{12}\hat{G}_{22}^{:m} = o_P(T^{-1})
\]
which, with \(H_2 = -(A_{11})^{-1}A_{12}\), can be rewritten as \(\hat{G}_{12}^{:m} = H_2 \hat{G}_{22}^{:m} + o_P(T^{-1})\). Substituting this expression into the bottom part of eq. \eqref{eq: Agm} and multiplying by \(T^{\frac{1}{2}}\) gives
\[
(A_{22} - A_{21}(A_{11})^{-1}A_{12})T^{\frac{1}{2}}\hat{G}_{22}^{:m} = S_{XX}^{22}T^{\frac{1}{2}}\hat{G}_{22}^{:m}\hat{\Lambda}_{22}^{:m:m} + o_P(T^{-\frac{1}{2}})
\]
By the Davis-Kahan Theorem (see e.g. Theorem 4 in~\cite{yu2015useful}) there exists a random matrix \(L\) solving
\[
(A_{22} - A_{21}(A_{11})^{-1}A_{12})L = T^{-1}S_{XX}^{22}L T\hat{\Lambda}_{22}^{:m:m}, \quad L^T T^{-1}S_{XX}^{22}L = I_n,
\]
and such that \(T^{\frac{1}{2}}\hat{G}_{22}^{:m} = L + o_P(T^{-\frac{1}{2}})\). We shall find the asymptotics of \(L L^T\) and then finish the proof by arguing that \(L\) is sufficiently close to \(T^{\frac{1}{2}}\hat{G}_{22}^{:m}\). Using Lemma \ref{lemma: cross} we compute 
\begin{align*}
    H_2 &\rightarrow_w (\beta^T\alpha \Sigma_X^{11})^{-1}(\Sigma_U^{12} + \Sigma_U^{12}(\Sigma_U^{22})^{-1}J_{22}), \\
    (A_{22} - A_{21}(A_{11})^{-1}A_{12})&\rightarrow_w J_{22}^T(\Sigma_U^{22})^{-1}J_{22}
\end{align*}
jointly with (\ref{eq: cross_1}), (\ref{eq: cross_2}), and (\ref{eq: cross_3}). With probability 1 the generalized eigenvalues on the diagonal of \(\Lambda_{22}\) are all distinct. Lemma \ref{lemma: eig_vec} in Appendix \ref{app: aux} along with the continuous mapping theorem then gives \(L L^T\rightarrow_w G_{22}^{:m} (G_{22}^{:m})^T\) jointly with \(H_2\) and the expressions in Lemma \ref{lemma: cross}. We now have all the tools needed to evaluate \(\hat{\Gamma}_{r+m}\) starting with the expression
\[
\hat{\Gamma}_{r+m} = S_{\Delta X X}\begin{pmatrix}\hat{G}_{\cdot 1} & \hat{G}_{\cdot 2}\end{pmatrix}\begin{pmatrix}\hat{G}_{\cdot 1}^T \\ \hat{G}_{\cdot 2}^T\end{pmatrix} = S_{\Delta X X}(\hat{G}_{\cdot 1} \hat{G}_{\cdot 1}^T + \hat{G}_{\cdot 2} \hat{G}_{\cdot 2}^T).
\]
As mentioned above \(\hat{G}_{\cdot2}=o_P(T^{-\frac{1}{4}})\) which implies that \(\hat{G}_{12}^{:m}(\hat{G}_{12}^{:m})^T\) and \(\hat{G}_{22}^{:m}(\hat{G}_{12}^{:m})^T\) are \(o_P(T^{-\frac{1}{2}})\) and so we immediately get \(\hat{\Gamma}_{r+m}^{\cdot 1} = \hat{\Gamma}_{r}^{\cdot 1} + o_P(T^{-\frac{1}{2}})\). For the remaining two blocks write
\begin{align*}
    \begin{pmatrix}
    \hat{\Gamma}_{r+m}^{12} \\
    \hat{\Gamma}_{r+m}^{22} 
\end{pmatrix} - \begin{pmatrix}
    \hat{\Gamma}_{r}^{12} \\
    \hat{\Gamma}_{r}^{22} 
\end{pmatrix}&=  \begin{pmatrix}
    S_{\Delta X X}^{11}\hat{G}_{12}^{:m}(\hat{G}_{22}^{:m})^T + S_{\Delta X X}^{12}\hat{G}_{22}^{:m}(\hat{G}_{22}^{:m})^T \\
    S_{\Delta X X}^{21}\hat{G}_{12}^{:m}(\hat{G}_{22}^{:m})^T + S_{\Delta X X}^{22}\hat{G}_{22}^{:m}(\hat{G}_{22}^{:m})^T
\end{pmatrix} \\
        &= \begin{pmatrix}
            T^{-1}\beta^T\alpha S_{X X}^{11} H_2 LL^T + T^{-1}S_{\Delta X X}^{12}LL^T \\
            T^{-1}S_{\Delta X X}^{22}LL^T
        \end{pmatrix} + o_P(T^{-1})
\end{align*}
and the result follows from Theorem \ref{thm: true_rank} and Lemma \ref{lemma: cross} in combination with the limits derived above for \(H_2\) and \(LL^T\). 
\end{proof} 

Note that the reasoning used to determine the limit of \(LL^T\) can also be applied to \(\hat{\Lambda}_{22}^{:m:m}\). In particular, Lemma \ref{lemma: eig_val} in the Appendix shows that \(T\hat{\Lambda}_{22}^{:m:m}\) converges in distribution to \(\Lambda_{22}^{:m:m}\). There is nothing special about our choice of \(m\) here and in particular \(T\hat{\Lambda}_{22}\rightarrow_w \Lambda_{22}\). It is seen that 
\begin{multline*}
    |J_{22}^T(\Sigma_U^{22})^{-1}J_{22} - B\lambda| = \\|(\Sigma_U^{22})^{\frac{1}{2}}| |(\Sigma_U^{22})^{-\frac{1}{2}}J_{22}^T(\Sigma_U^{22})^{-1}J_{22}(\Sigma_U^{22})^{-\frac{1}{2}} - (\Sigma_U^{22})^{-\frac{1}{2}}B(\Sigma_U^{22})^{-\frac{1}{2}}\lambda| |(\Sigma_U^{22})^{\frac{1}{2}}|.
\end{multline*}
Recalling the definition of \(J_{22}\) and \(B\) in Lemma \ref{lemma: cross} we get that the diagonal of \(\Lambda_{22}\) is, in fact, equal to the ordered solutions of
\[
\left|\left(\int_0^1 W_{2s} \, d W_{2s}^T\right)\left(\int_0^1 W_{2s} \, d W_{2s}^T\right)^T - \lambda\int_0^1 W_{2s}W_{2s}^T \, ds\right|=0
\]
where \(W_{2s}\) are the last \(n\) components of the standard Brownian motion \(W_s\). Analogously, we see from Lemma \ref{lemma: eig_val} and the proof of Theorem \ref{thm: true_rank} that \((\hat{\lambda}_1, ..., \hat{\lambda}_r)\) are asymptotically equivalent to the ordered solutions of \(|A_{11} - S_{XX}^{11}r|\). In other words, \(\hat{\Lambda}_{11}\) converges in probability to \(\Lambda_{11}\) whose diagonal are the ordered solutions of
\begin{equation}\label{eq: population_eig}
  |\Sigma_X^{11}\alpha^T\beta(\Sigma_{\Delta X}^{-1})_{11}\beta^T\alpha \Sigma_X^{11} - \Sigma_X^{11} \lambda| = 0.  
\end{equation}

We have thus determined the asymptotics of \(\hat{\Lambda}\) as well. This is a well known result in the cointegration literature from which one can derive the asymptotic distribution of the so-called trace test statistic, which tests the hypothesis that the cointegration rank is at most \(k<p\)~\citep{johansen1988statistical}.

The asymptotics are a little more involved in the case where the true rank is underestimated. Before the proof, we first show some intermediate lemmas. To study the limiting distribution of \(T^{\frac{1}{2}}(\hat{\Gamma}_m^{11} - \Gamma_{11} - b_m)\) we follow the strategy of ~\cite{izenman1975reduced} which forces us to set up more notation and introduce some ideas from matrix differential calculus. We use the notation from \cite{magnus2019matrix}. For a matrix valued function \(\Phi:\mathbb{R}^{m\times n} \rightarrow \mathbb{R}^{k\times l} \) we let \(d\Phi\) denote its differential. Similarly, we define the derivative of \(\Phi(A)\) with respect to \(A\) as the derivative of the vectorization of \(\Phi(A)\) with respect to the vectorization of \(A\):
\[
D\Phi = \frac{\partial \text{vec}\Phi(A)}{\partial \text{vec}A^T},
\]
i.e., the Jacobian matrix of \(\text{vec}(\Phi)\). One useful result we shall use is the following \citep{neudecker1968kronecker}: If \(d\Phi(A) = \sum_{i} M_i (dA) N_i\) for suitable matrices \(M_i, N_i\), then the derivative is \(D\Phi = \sum_i N_i^T\otimes M_i\). We define the commutation matrix \(I_{(k,l)}\) as the square \(kl \times kl\) block matrix partitioned into \(k\times l\) blocks whose \((i,j)\)'th block is 1 in the \((j, i)\)'th coordinate and 0 everywhere else. Our goal is to use the delta method to determine the asymptotic distribution of the left side of \(\hat{\Gamma}_m\).

\begin{lemma}[Delta method] \label{lemma: delta}
Let \((x_n)_{n\in\mathbb{N}}\subset \mathbb{R}^d\) be a sequence of random vectors such that \(\sqrt{n}(x_n - x) \rightarrow_w \mathcal{N}(0, \Sigma)\) for some \(x\in\mathbb{R}^d\) and a positive definite covariance matrix \(\Sigma \in \mathbb{R}^{d\times d}\). Assume furthermore that \(h:\mathbb{R}^d\rightarrow \mathbb{R}^p\) is a continuous function that is continuously differentiable in a neighbourhood of \(x\) with Jacobian matrix \(J=\frac{\partial h}{\partial y^T}\rvert_{y=x}\). Then, \(\sqrt{n}(h(x_n) - h(x))\rightarrow_w\mathcal{N}(0, J\Sigma J^T)\).
\end{lemma}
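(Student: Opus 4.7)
The plan is to prove this by the classical Taylor-expansion plus Slutsky argument, which is the standard route to the delta method. The only subtle part is controlling the remainder term, and the whole argument is essentially independent of the time-series machinery developed earlier in the paper.

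First, I would invoke continuous differentiability of $h$ in a neighbourhood of $x$ to write, via a first-order Taylor expansion with remainder,
\begin{equation*}
h(x_n) - h(x) = J(x_n - x) + R(x_n, x), \qquad \|R(x_n, x)\| = o(\|x_n - x\|)
\end{equation*}
as $x_n \to x$. Multiplying by $\sqrt{n}$ gives
\begin{equation*}
\sqrt{n}\bigl(h(x_n) - h(x)\bigr) = J\bigl(\sqrt{n}(x_n - x)\bigr) + \sqrt{n}\, R(x_n, x).
\end{equation*}

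Second, I would handle the two terms separately. For the main term, note that $\sqrt{n}(x_n - x) \rightarrow_w \mathcal{N}(0, \Sigma)$ by hypothesis, so the continuous mapping theorem (applied to the linear map $u \mapsto Ju$) yields $J\sqrt{n}(x_n - x) \rightarrow_w \mathcal{N}(0, J\Sigma J^T)$. For the remainder, since $\sqrt{n}(x_n - x)$ is $O_p(1)$ we have $\|x_n - x\| = O_p(n^{-1/2})$, and in particular $x_n \rightarrow_p x$. Combined with the Taylor estimate $\|R(x_n, x)\|/\|x_n - x\| \to 0$ as $x_n \to x$, an $\varepsilon$–$\delta$ argument shows $\sqrt{n}\|R(x_n, x)\| = o_p(1)$.

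Third, I would combine the two pieces by Slutsky's theorem: adding the $o_p(1)$ remainder to the weakly convergent main term preserves the weak limit, giving $\sqrt{n}(h(x_n) - h(x)) \rightarrow_w \mathcal{N}(0, J\Sigma J^T)$, as desired. The only mildly delicate step is verifying that the stochastic remainder $\sqrt{n}R(x_n,x)$ vanishes in probability; this is routine but requires being careful that the deterministic little-$o$ bound from Taylor's theorem translates into the correct $o_p$ statement, which it does because $x_n$ concentrates at $x$ at rate $n^{-1/2}$ and $h$ is $C^1$ in a neighbourhood of $x$. No positive definiteness of $\Sigma$ is actually needed for the argument; the hypothesis is only used to give the limit the stated non-degenerate Gaussian form.
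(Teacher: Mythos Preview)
Your argument is the standard Taylor-plus-Slutsky proof of the delta method and is correct. The paper does not actually supply a proof of this lemma; it is stated as a classical result and used as a tool, so there is nothing to compare against beyond noting that your approach is the textbook one.
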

Furthermore, we need the following results on the sample covariance matrix.
\begin{lemma}\label{lemma: xtil}
Define \(\tilde{X}_t = \begin{pmatrix} \Delta X_t^T &  X_{1t-1}^T\end{pmatrix}^T\) and consider the sample covariance matrix \(S_{\tilde{X}\tilde{X}}\). Then \(\tilde{X}_t\) is stationary with,
\[
S_{\tilde{X}\tilde{X}}\rightarrow_p \Sigma_{\tilde{X}} = \begin{pmatrix}
    \Sigma_{\Delta X}^{11} \, & \, \Sigma_{\Delta X}^{12} \, & \, \beta^T\alpha \Sigma_{X}^{11} \\
    \Sigma_{\Delta X}^{21} \, & \, \Sigma_{\Delta X}^{22} \, & \, 0 \\
    \Sigma_{X}^{11}\alpha^T\!\beta \, & \, 0 \, & \, \Sigma_X^{11}
\end{pmatrix}.
\]
Let \(\kappa_{ijkl}\) be the joint cumulant of \(U_{t,i}, U_{t,j}, U_{t,k}\), and \(U_{t, l}\). If \(\kappa_{ijkl}\) vanishes for all \(1\le i,j,k,l\le p\), then \(\sqrt{T}\textnormal{vec}(S_{\tilde{X}\tilde{X}} - \Sigma_{\tilde{X}})\rightarrow_w \mathcal{N}(0, \Xi)\), where
\begin{equation}
\label{eq: Xi}
\Xi = \sum_{k=-\infty}^{\infty} \gamma_k \otimes \gamma_k + I_{(p+r, p+r)}\sum_{k=-\infty}^{\infty} \gamma_k\otimes\gamma_k
\end{equation}
and \(\gamma_k = \mathbb{E}(\tilde{X}_0\tilde{X}_k^T)\).

\end{lemma}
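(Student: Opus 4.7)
The plan is to handle the three claims in order. Stationarity and the explicit form of $\Sigma_{\tilde X}$ are bookkeeping based on the setup; consistency is an application of the ergodic theorem; the CLT is the heart of the lemma and requires both an invocation of a standard result for linear processes and a careful cumulant computation.

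For stationarity, Assumption~\ref{as: roots} together with the convergent series $\Sigma_X^{11}=\sum_{s\ge 0}(I_r+\beta^T\alpha)^s\Sigma_U^{11}(I_r+\alpha^T\beta)^s$ recalled before Lemma~\ref{lemma: cross} shows that $X_{1t}$ is a stationary causal VAR(1) driven by $U_{1t}$. Consequently $\Delta X_{1t}=\beta^T\alpha X_{1,t-1}+U_{1t}$ is stationary and $\Delta X_{2t}=U_{2t}$ is i.i.d., so $\tilde X_t$ is jointly stationary. The block entries of $\Sigma_{\tilde X}$ then follow from \eqref{eq: xone}--\eqref{eq: xtwo}: the top-left $\Sigma_{\Delta X}$-block is as in the preceding text, $\mathbb{E}(\Delta X_{1t}X_{1,t-1}^T)=\beta^T\alpha\Sigma_X^{11}$ by orthogonality of $U_{1t}$ and the past, and $\mathbb{E}(\Delta X_{2t}X_{1,t-1}^T)=0$ since $U_{2t}$ is independent of the past. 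Writing $\tilde X_t=\sum_{k\ge 0}\Psi_k U_{t-k}$ with exponentially decaying coefficients then gives ergodicity, and the bounded fourth moment of $U_t$ makes $\tilde X_t\tilde X_t^T$ integrable, so the ergodic theorem yields $S_{\tilde X\tilde X}\rightarrow_p \Sigma_{\tilde X}$.

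For the CLT, the standard theorem on sample autocovariances of short-memory causal linear processes driven by i.i.d.\ innovations with finite fourth moments gives asymptotic normality of $\sqrt{T}\,\text{vec}(S_{\tilde X\tilde X}-\Sigma_{\tilde X})$ with some covariance $\Xi$ — the required summability of $\|\gamma_h\|$ follows from the exponential decay of $\Psi_k$. To identify $\Xi$ in the stated form, expand
\[
\mathrm{Cov}\bigl(\tilde X_{si}\tilde X_{sj},\tilde X_{tk}\tilde X_{tl}\bigr)=\mathbb{E}(\tilde X_{si}\tilde X_{tk})\mathbb{E}(\tilde X_{sj}\tilde X_{tl})+\mathbb{E}(\tilde X_{si}\tilde X_{tl})\mathbb{E}(\tilde X_{sj}\tilde X_{tk})+\kappa^{\tilde X}_{ijkl}(s,s,t,t),
\]
where the fourth-cumulant term vanishes because the hypothesis $\kappa_{ijkl}=0$ on $U_t$ propagates to $\tilde X_t$ by multilinearity of cumulants applied to the linear representation. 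Summing the two remaining terms over $h=t-s$ and vectorising column-wise, the first gives $\sum_h \gamma_h\otimes\gamma_h$, while the second, which exchanges the roles of $k$ and $l$, gives $I_{(p+r,p+r)}\sum_h \gamma_h\otimes\gamma_h$ via the identity $\text{vec}(A^T)=I_{(m,m)}\text{vec}(A)$, yielding \eqref{eq: Xi}.

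The main obstacle is the index bookkeeping in the last step: one must verify that the Isserlis-type decomposition, with the fourth cumulant removed, lines up with the stated form of $\Xi$, in particular that the second double sum is captured exactly by the prepended commutation matrix. Propagation of the cumulant vanishing from $U_t$ to $\tilde X_t$ itself is routine via multilinearity and absolute summability of $\Psi_k$.
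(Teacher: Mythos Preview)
Your proposal is correct and follows essentially the same route as the paper: establish the causal linear representation $\tilde X_t=\sum_{s\ge 0}\Psi_s U_{t-s}$ with absolutely summable coefficients, read off stationarity and the law of large numbers, then invoke a standard CLT for the sample covariance of such a process and rewrite the entrywise long-run covariance $\sum_u(\gamma_u)_{ik}(\gamma_u)_{jl}+\sum_u(\gamma_u)_{il}(\gamma_u)_{jk}$ in Kronecker/commutation form. The only cosmetic difference is that you derive this covariance formula yourself from the Isserlis/cumulant identity (together with the observation that vanishing fourth cumulants of $U_t$ propagate to $\tilde X_t$ by multilinearity), whereas the paper simply cites \cite{roy1989asymptotic} for it and then does the index bookkeeping via the map $\eta(i,j)=(p+r)(j-1)+i$.
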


Note that the second part of Lemma \ref{lemma: xtil} holds also if we replace \(\tilde{X}_t\) with any multivariate stationary linear process with vanishing fourth order cumulants. In fact, it is sufficient to assume that the cumulants are finite, but this complicates the expression for the asymptotic covariance somewhat so we keep the assumption. It holds specifically when \(U_t\) is Gaussian.

\begin{proof}[Proof of Lemma \ref{lemma: xtil}]
\(\tilde{X}_t\) is clearly stationary since \(\Delta X_t\) and \(X_{1t-1}\) are stationary. From (\ref{eq: xone}) and (\ref{eq: xtwo}) we see that \(\tilde{X}_t = \sum_{s=0}^{\infty}\Psi_s U_{t - s}\) where 
\[
\Psi_0=\begin{pmatrix}
    I_r \, & \, 0_{r\times n} \\
    0_{n\times r} \, & \, I_n \\
    0_{r\times r} \, & \, 0_{r\times n}
\end{pmatrix}, \quad\quad \Psi_s = \begin{pmatrix}
    \beta^T\alpha(I_r + \beta^T\alpha)^{s-1} \, & \, 0_{r\times n} \\
    0_{n\times r} \, & \, 0_{n\times n} \\
    (I_r + \beta^T\alpha)^{s-1} \, & \, 0_{r\times n}
\end{pmatrix} \text{ for } s \ge 1
\]
and it is easily verified that \(\sum_{s=0}^{\infty}||\Psi_s|| < \infty\). The first part of the statement then follows from known results about linear processes (see e.g. Proposition C.12 in \cite{lutkepohl2005new}).

Under the assumptions of Lemma \ref{lemma: xtil}, \(\sqrt{T}(S_{\tilde{X}\tilde{X}} - \Sigma_{\tilde{X}})\) converges in distribution to some random matrix, \(N\), with \(\textnormal{vec}(N)\) normal and covariance given by (see e.g. \cite{roy1989asymptotic})
\[
\textnormal{Cov}(N_{i,j}, N_{k,l}) = \sum_{u=-\infty}^{\infty}(\gamma_u)_{ik}(\gamma_u)_{jl} + \sum_{u=-\infty}^{\infty}(\gamma_u)_{jk}(\gamma_u)_{il}.
\]
Now let \(\eta(i,j)=(p+r)(j-1) + i\) and observe that 
\begin{align*}
    (\gamma_u)_{ik}(\gamma_u)_{jl} &= (\gamma_u \otimes \gamma_u)_{\eta(i, j), \eta(k, l)}, \\
    (\gamma_u)_{jk}(\gamma_u)_{il} &= \left((\gamma_u \otimes \gamma_u) I_{(p+r, p+r)}\right)_{\eta(i, j), \eta(k, l)}.
\end{align*}
Since \(\Xi_{\eta(i, j), \eta(k, l)} = \textnormal{Cov}(N_{i,j}, N_{k,l})\), this is exactly what we need to show, keeping in mind that \(I_{(p+r, p+r)}(\gamma_u \otimes \gamma_u) = (\gamma_u \otimes \gamma_u)I_{(p+r, p+r)}\) \citep{magnus1979commutation}.
\end{proof}

We now have all the tools we need to derive the asymptotics of \(\hat{\Gamma}_m^{\cdot 1}\). For a matrix \(M\in \mathbb{R}^{(p+r)\times(p+r)}\) write it in block form
\[
M = \begin{pmatrix}
    M_{11} & M_{12} \\
    M_{21} & M_{22}
\end{pmatrix}
\]
where \(M_{11}\) is \(p\times p\) and \(M_{22}\) is \(r\times r\). Denote by \(\rho_1 \ge \cdots \ge \rho_r\) the generalized eigenvalues sorted in decreasing order for the generalized eigenvalue problem given by  \(M_{21}(M_{11})^{-1}M_{12}\) and \(M_{22}\). Let \(v_1, ..., v_r\) be the corresponding generalized eigenvectors. Define the function \(h:\mathbb{R}^{(p+r)\times(p+r)}\rightarrow \mathbb{R}^{r\times r}\) by \(h(M) = M_{12}\sum_{k=1}^m v_k v_k^T\). We can write
\[
dM_{11} = \left(I_p, 0_{p\times r}\right) dM \begin{pmatrix}I_p \\ 0_{r\times p}\end{pmatrix}, \quad dM_{12} = \left(I_p, 0_{p\times r}\right) dM \begin{pmatrix}0_{p\times r} \\ I_r\end{pmatrix},
\]
\[
dM_{21} = \left(0_{r\times p}, I_r\right) dM \begin{pmatrix}I_p \\ 0_{r\times p}\end{pmatrix}, \quad dM_{22} = \left(0_{r\times p},  I_r\right) dM \begin{pmatrix}0_{p\times r} \\ I_r\end{pmatrix}.
\]
Also, we have \(dM_{11}^{-1} = -M_{11}^{-1}(dM_{11})M_{11}^{-1}\) (see e.g. Thm. 8.3 in \cite{magnus2019matrix}) whence
\begin{multline*}
    d(M_{21}M_{11}^{-1}M_{12}) 
    = \left(0_{r\times p}, I_r\right) dM \begin{pmatrix} M_{11}^{-1}M_{12} \\ 0_{r\times p}\end{pmatrix} 
    \\- \left(M_{21}M_{11}^{-1}, 0_{p\times r}\right) dM  \begin{pmatrix}M_{11}^{-1}M_{12} \\ 0_{r\times p}\end{pmatrix} 
    + \left(M_{21}M_{11}^{-1}, 0_{p\times r}\right) dM \begin{pmatrix}0_{p\times r} \\ I_r\end{pmatrix}.
\end{multline*}

For ease of notation we now write \(P_i = G_{11}e_i (G_{11}e_i)^T\) with \(e_i\) being the \(i\)'th unit vector, that is, \(G_{11}e_i\) is the \(i\)'th column of \(G_{11}\), \(\Sigma_{X\Delta X} = (\Sigma_X^{11}\alpha^T\beta, 0_{r\times n})\), and \(\Sigma_{\Delta X X} = \Sigma_{X\Delta X}^T\). Under Assumption \ref{as: eig}, the map \(M\mapsto v_i v_i^T\) is differentiable at \(M=\Sigma_{\tilde{X}}\) (see Appendix \ref{app: aux}). Let \(\xi_i = D(v_iv_i^T)\rvert_{M=\Sigma_{\tilde{X}}}\). Lemma \ref{lemma: eig_diff} yields
\begin{equation} \label{eq: xi_i}
    \xi_i = \sum_{j\neq i}(\lambda_i - \lambda_j)^{-1}(P_i\otimes P_j + P_j\otimes P_i)F_i - \left(0_{r\times p}, P_i\right)\otimes \left( 0_{r\times p}, P_i\right)
\end{equation}
where
\[
F_i = \left( \Sigma_{X\Delta X}\Sigma_{\Delta X}^{-1}\otimes \left( -\Sigma_{X\Delta X}\Sigma_{\Delta X}^{-1}, I_r\right), I_r \otimes \left( \Sigma_{X\Delta X}\Sigma_{\Delta X}^{-1}, -\lambda_i I_r\right)\right).
\]
Then, 
\begin{equation}
\label{eq: xi}
\xi = Dh\rvert_{M=\Sigma_{\tilde{X}}} = \sum_{i=1}^m\begin{pmatrix} 0_{r\times p} & P_i \end{pmatrix} \otimes \begin{pmatrix} I_p & 0_{p\times r}\end{pmatrix} + (I_r \otimes \Sigma_{\Delta X X}) \xi_i.
\end{equation}
Observe that (\ref{eq: g_1}) also holds with \(o_P(1)\) replaced by \(o_P(T^{-\frac{1}{4}})\) and a similar argument as that applied in the proof of Theorem \ref{thm: over_rank} therefore shows that \(\hat{G}_{11}^{:m}(\hat{G}_{11}^{:m})^T = G_{11}^{:m}(G_{11}^{:m})^T + o_P(T^{-\frac{1}{2}})\). In particular, 
\[
\sqrt{T}\begin{pmatrix}
    \hat{\Gamma}_m^{11} - \Gamma_{11} - b \\
    \hat{\Gamma}_m^{21} - \Gamma_{21}
\end{pmatrix} = \sqrt{T}(h(S_{\tilde{X}\tilde{X}}) - h(\Sigma_{\tilde{X}})) + o_P(T^{-\frac{1}{2}})
\]
and it is a straightforward application of Lemma \ref{lemma: delta} and Lemma \ref{lemma: xtil} to prove that \(\sqrt{T}\textnormal{vec}(h(S_{\tilde{X}\tilde{X}}) - h(\Sigma_{\tilde{X}})) \rightarrow_w \mathcal{N}(0, \xi \Xi \xi^T)\). Thus, we have identified the asymptotic distribution of the two left blocks. As we shall see below there is a much simpler expression for the asymptotic covariance matrix of \(\sqrt{T}\textnormal{vec}(\hat{\Gamma}_m^{21} - \Gamma_{21})\). We are now ready to prove Theorem \ref{thm: under_rank}.

\begin{proof}[Proof of Theorem \ref{thm: under_rank}]
Starting as in the proof of Theorem \ref{thm: true_rank} and replacing \(\hat{G}_{11}^{:m}(\hat{G}_{11}^{:m})^T\) with \(G_{11}^{:m}(G_{11}^{:m})^T\) in the appropriate places, we find that
\begin{align*}
    \hat{\Gamma}_m^{11} &= S_{\Delta X X}^{11}G_{11}^{:m}(G_{11}^{:m})^T + o_P(T^{-\frac{1}{2}}) \\
    \hat{\Gamma}_m^{21} &= S_{UX}^{21}G_{11}^{:m}(G_{11}^{:m})^T + o_P(T^{-\frac{1}{2}})\\
    \hat{\Gamma}_m^{12} &=  T^{-1}\beta^T\alpha \Sigma_{XX}G_{11}^{:m}(G_{11}^{:m})^T H_1^T + o_P(T^{-1}) \\
    \hat{\Gamma}_m^{22} &= o_P(T^{-1})
\end{align*}
We derived the asymptotic distribution for the first two expressions above. The other two follow directly from Lemma \ref{lemma: cross}.

The second result is also an easy consequence of Lemma \ref{lemma: cross}, since \(T^{\frac{1}{2}}\textnormal{vec}(\hat{\Gamma}_m^{21} - \Gamma_{21})\) converges in distribution to \((G_{11}^{:m}(G_{11}^{:m})^T\otimes I_n)\textnormal{vec}(V_{21})\), which, of course, is normal with mean 0 and covariance matrix as given in the theorem. Another way to arrive at the same result is to first observe that 
\[
\textnormal{vec}\left(\hat{\Gamma}_m^{21} - \Gamma_{21}\right) = \left(I_r \otimes \left( 0_{n\times r}, I_n \right)\right) \textnormal{vec}\begin{pmatrix} \hat{\Gamma}_m^{11} - \Gamma_{11} - b \\ \hat{\Gamma}_m^{21} - \Gamma_{21}\end{pmatrix}
\]
and the asymptotic covariance of \(\sqrt{T}\textnormal{vec}(\hat{\Gamma}_m^{21} - \Gamma_{21})\) must therefore equal \(\xi_{21}\Xi\xi_{21}^T\) where \(\xi_{21} = (I_r \otimes (0_{n\times r}, I_n))\xi\). We compute
\[
\xi_{21} = \left( 0_{r\times p}, G_{11}^{:m}(G_{11}^{:m})^T \right) \otimes \left( 0_{n\times r}, I_n, 0_{n\times r} \right)
\]
and thus \(\xi_{21}(\gamma_k \otimes \gamma_k) = (\gamma_k \otimes \gamma_k)\xi_{21}^T=0\) for all \(k\neq 0\). Furthermore, we find that \(\xi_{21}I_{(p+r, p+r)}(\gamma_0 \otimes \gamma_0)\xi_{21}^T = 0\) and 
\[
\xi_{21}(\gamma_0 \otimes \gamma_0)\xi_{21}^T = \xi_{21}(\Sigma_{\tilde{X}} \otimes \Sigma_{\tilde{X}})\xi_{21}^T = G_{11}^{:m}(G_{11}^{:m})^T\Sigma_X^{11}G_{11}^{:m}(G_{11}^{:m})^T \otimes \Sigma_U^{22}
\]
which then results in the same covariance as before.
\end{proof}

When \(m=r\) the expression for \(\xi\) simplifies significantly. Indeed, as noted after Lemma \ref{lemma: eig_diff} in Appendix \ref{app: aux}, we find that \(\sum_{i=1}^r\xi_i = -(\Sigma_X^{11})^{-1} \otimes (\Sigma_X^{11})^{-1}\) and thus
\[
\xi = \begin{pmatrix} 0_{r\times p} & (\Sigma_X^{11})^{-1}\end{pmatrix} \otimes \begin{pmatrix} I_p & - \Sigma_{\Delta X X}(\Sigma_X^{11})^{-1} \end{pmatrix}.
\]
We then compute \(\xi(\gamma_k\otimes \gamma_k)\xi^T = \xi I_{(p+r, p+r)}(\gamma_k\otimes \gamma_k)\xi^T = 0\) for \(k\neq 0\). For \(k=0\) we have \(\gamma_0 = \Sigma_{\tilde{X}}\) and \(\xi I_{(p+r, p+r)}(\gamma_0\otimes\gamma_0)\xi^T = 0\).  Thus,
\[
\xi \Xi\xi^T = \xi(\Sigma_{\tilde{X}}\otimes\Sigma_{\tilde{X}})\xi^T = (\Sigma_X^{11})^{-1}\otimes \Sigma_U
\]
which is the covariance matrix of \(\textnormal{vec}(V(\Sigma_X^{11})^{-1})\), i.e., our result is in line with the one derived in Theorem \ref{thm: true_rank}.

The following proof is an easy consequence of the discussion in Section \ref{sec: orig}.

\begin{proof} [Proof of Theorem \ref{thm: asym_pi}]
Assume that \(k\ge r\). The first statement is then a direct consequence of  (\ref{eq: asym_true}), (\ref{eq: asym_over}) and (\ref{eq: asym_under}), and the fact that \(T^{\frac{1}{2}}\textnormal{vec}(\hat{\Pi}_k - \Pi) = (Q^T\otimes Q^{-1})T^{\frac{1}{2}}\textnormal{vec}(\hat{\Gamma}_k - \Gamma)\), recalling that the two right blocks are \(o_P(T^{-\frac{1}{2}})\). 

For the second part, assume that \(1 \le k < r\). Then,
\[
\hat{\Pi}_k - \Pi = Q^{-1}(\hat{\Gamma} - \Gamma)Q \rightarrow_p \alpha(\beta^T\alpha)^{-1} b \beta^T = \tilde{b}.
\]
Applying the same argument as above and referring to the proof of Theorem \ref{thm: over_rank}, we obtain the desired distribution.
\end{proof}

\begin{proof}[Proof of Theorem \ref{thm: asym_weight}]
    To prove consistency simply observe that
    \[
    \hat{\Gamma}_{w_T} - \hat{\Gamma}_r = S_{\Delta X X}\left(\sum_{i=1}^r (w_{T, i} - 1)\hat{g}_i \hat{g}_i^T + \sum_{i=r+1}^p w_{T, i}\hat{g}_i \hat{g}_i^T\right).
    \]
    \(S_{\Delta X X} \hat{g}_i\hat{g}_i^T\) is bounded in probability for \(i \le r\) and converges in probability to 0 for \(i > r\) from which it follows that both terms on the right hand side converge in probability to 0 for \(T\) going to infinity. The result then follows since \(\hat{\Gamma}_r\) is consistent.

    Now, for \(0\le k\le p\), define the matrices
    \[
    D = \begin{pmatrix}
        T^{\frac{1}{2}}I_r & 0 \\
        0 & TI_n
    \end{pmatrix}, \quad B_k = \begin{pmatrix}
        b_k & 0 \\
        0 & 0
    \end{pmatrix}, \quad B_w = \begin{pmatrix}
        b_w & 0 \\
        0 & 0
    \end{pmatrix}
    \]
    where \(b_k\) is the asymptotic bias of \(\hat{\Gamma}_k\) for \(0\le k\le p-1\) and 0 otherwise. Let \(W_{T, i} = w_{T, i+1} - w_{T, i}\) for \(1\le i \le p-1\), \(W_{T, 0} = w_{T, 1}\), and \(W_{T, p} = 1 - w_{T, p}\) and define \(W_i\) analogously for \(w\) instead of \(w_T\) so that, by assumption, \(T(W_{T, i} - W_i)\) converges in probability to 0 for \(T\rightarrow\infty\). Furthermore, define the random matrices \(Z_0,..., Z_p\in\mathbb{R}^{p\times p}\) such that \(Z_0=0\), \(Z_k\) is the right-hand side of \eqref{eq: asym_under} for \(1\le k < r\), \(Z_r\) is the right-hand side of \eqref{eq: asym_true}, and \(Z_k\) is the right-hand side of \eqref{eq: asym_over} for \(r < k \le p\). It then follows from Theorems \ref{thm: true_rank}, \ref{thm: over_rank}, and \ref{thm: under_rank} along with the continuous mapping theorem that
    \[
    \left(\hat{\Gamma}_{w_T} - \Gamma - B_w\right)D = \sum_{k=0}^p W_{T, k}\left(\hat{\Gamma}_k - \Gamma - B_k\right)D \rightarrow_w \sum_{k=0}^p W_k Z_k
    \]
    for \(T\rightarrow\infty\). Upon rewriting the right-hand side of the above expression we obtain \eqref{eq: asym_weight}.
\end{proof}

\begin{appendix}

\section{Multiple Lags}
\label{sec: multi}

In this section we consider processes of higher order. Let \(d\ge 1\) and \(\{Y_t\}_{t=0}^{\infty}\subset \mathbb{R}^p\) be an AR(\(d\))-process. Similar to (\ref{eq: vecm}), the dynamics of \(Y_t\) can be expressed in VECM form by
\[
\Delta Y_t = \Pi Y_{t-1} + \sum_{i=1}^{d-1}\Psi_i\Delta Y_{t-i} + Z_t
\]
where \(\{Z_t\}_{t=0}^{\infty}\) is a sequence of i.i.d. copies of \(Z_0\) with 0 mean and finite fourth moment. Define the processes \(X_{0t}=\Delta Y_t\), \(X_{1t}=Y_{t-1}\), and \(X_{2t}=( \Delta Y_{t-1},..., \Delta Y_{t-d})\) as well as the new parameter \(\Psi = (\Psi_1,..., \Psi_{d-1})\). We can then rewrite the equation as
\[
X_{0t} = \Pi X_{1t} + \Psi X_{2t}
\]
Assumptions similar to Assumptions \ref{as: roots} and \ref{as: orth} are needed to ensure a cointegrated process. We assume that \(1 \le r < p\).

\begin{assumption} \label{as: roots_ml}
The polynomial \(z\mapsto |(1 - z)I_p - \Pi z - \sum_{i=1}^{d-1}\Psi_i(1-z)z^i|\) has \(n = p - r\) unit roots and all other roots are outside the unit circle.
\end{assumption}

As before, this assumption implies that the rank of \(\Pi\) is \(p - n = r\) so that we can write \(\Pi = \alpha\beta^T\) for \(\alpha, \beta\in\mathbb{R}^{p\times r}\) of full rank \(r\).

\begin{assumption} \label{as: orth_ml}
The matrix \(\alpha_{\perp}^T (I_p - \sum_{i=1}^{d-1}\psi_i) \beta_{\perp}\) is non-singular.
\end{assumption}

The parameters are usually estimated as follows \citep{johansen1995likelihood}: First we find the residuals obtained from regressing \(X_{0t}\), \(X_{1t}\), and \(Z_t\) on \(X_{2t}\) denoted by \(R_{0t}\), \(R_{1t}\), and \(U_t\), respectively. The reduced rank estimator, \(\hat{\Pi}_k\) of \(\Pi\), is then obtained as above starting with the equation 
\[
R_{0t} = \Pi R_{1t} + U_t.
\]
After finding \(\hat{\Pi}_k\) an estimator for \(\Psi\) is given by ordinary least squares, i.e., \(\hat{\Psi}_{LS}\) is obtained by regressing \(X_{2t}\) on \(X_{0t} - \hat{\Pi}_k X_{1t}\). The asymptotics in this case can be derived from the previous section. Indeed, as shown in \cite{johansen1995likelihood}, similar limiting results as those given in Lemma \ref{lemma: cross} exist for the empirical cross-covariances given by \(R_{0t}\) and \(R_{1t}\) and the limiting behaviour of \(\hat{\Psi}_{LS}\) is studied in the usual way.

\section{Auxiliary results}
\label{app: aux}
We state here some results from perturbation theory of linear operators. These will be relevant especially for proving convergence of eigenvectors and eigenvalues. For more information, see~\cite{kato2013perturbation}. Let \(M, N \in\mathbb{R}^{p\times p}\) and denote by \(||\cdot||_F\) the Frobenius-norm. Define \(\rho(M, N)\in\mathbb{C}^p\) to be the ordered \(p\)-tuple that contains the solutions to \(M - \rho N = 0\) counted with multiplicity. 

\begin{lemma} \label{lemma: eig_val}
Assume that \(N\) is non-singular. Then the map \((M, N)\mapsto \rho(M, N)\) is continuous in the sense that for a sequence \(M_n, N_n\in\mathbb{R}^{p\times p}\) with \(||M-M_n||_F + ||N- N_n||_F\rightarrow 0\) it holds that
\[
||\rho(M, N) - \rho(M_n, N_N)|| \rightarrow 0.
\]
\end{lemma}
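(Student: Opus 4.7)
The plan is to reduce the generalized eigenvalue problem to a standard one and then invoke the classical continuity of roots of polynomials with respect to their coefficients.

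First I would observe that since $N$ is non-singular and $\det$ is continuous, we have $\det(N_n) \to \det(N) \neq 0$, so $N_n$ is non-singular for all sufficiently large $n$ and $N_n^{-1} \to N^{-1}$ in Frobenius norm by the continuity of matrix inversion on the open set of invertible matrices. In particular, $N_n^{-1} M_n \to N^{-1} M$. Since $\det(M - \rho N) = \det(N)\det(N^{-1}M - \rho I_p)$, the entries of $\rho(M,N)$ are exactly the eigenvalues of $N^{-1}M$ (and similarly for the sequence), so it suffices to show continuity of the ordered tuple of eigenvalues under matrix perturbations.

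The next step is to invoke continuity of the characteristic polynomial: the coefficients of $p_A(\rho) = \det(\rho I_p - A)$ are polynomials in the entries of $A$ and hence continuous in $A$. Thus $p_{N_n^{-1}M_n} \to p_{N^{-1}M}$ coefficient-wise as monic polynomials of degree $p$. The conclusion then follows from the classical result that the roots of a monic polynomial depend continuously on its coefficients, in the sense that one may choose an ordering of the roots (in $\mathbb{C}^p$) so that they converge pointwise; see, e.g., Theorem 1.4 in \cite{kato2013perturbation}. Together with the continuous dependence established above, this yields the convergence $\|\rho(M_n, N_n) - \rho(M, N)\| \to 0$ for an appropriate choice of ordering.

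The main obstacle is the ordering issue. Since eigenvalues may be complex there is no canonical total order on $\mathbb{C}^p$, and near eigenvalues of multiplicity greater than one the individual "branches" of eigenvalues may cross, so no fixed lexicographic rule produces a continuous selection. The cleanest remedy is to read the statement as asserting the existence of some ordering along which convergence holds, which is the content of the root-continuity theorem; equivalently, the unordered multiset of eigenvalues is continuous with respect to the Hausdorff metric on $\mathbb{C}$. In the present applications (e.g.\ the proof of Theorem \ref{thm: over_rank}), the relevant generalized eigenvalues are almost surely simple, in which case the ordering ambiguity disappears and pointwise convergence of ordered eigenvalues is immediate from the implicit function theorem applied to the characteristic polynomial.
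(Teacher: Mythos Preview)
Your proposal is correct and follows essentially the same route as the paper: reduce the generalized problem to the standard eigenvalue problem for \(N^{-1}M\) via \(\det(M-\rho N)=\det(N)\det(N^{-1}M-\rho I_p)\), and then invoke the continuity result from \cite{kato2013perturbation}. The paper's proof is a one-liner citing Theorem~5.14 there, whereas you spell out the intermediate steps (continuity of inversion, of the characteristic polynomial coefficients) and appeal to the root-continuity theorem instead; you also flag the ordering subtlety, which the paper leaves implicit.
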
 

\begin{proof}
This is Theorem 5.14 in~\cite{kato2013perturbation} after observing that for \(\rho\in\rho(M, N)\)
\[
|N^{-1}M - I\rho| = 0
\]
\end{proof}

If \(M\) and \(N\) are real symmetric, then \(\rho(M, N)\in\mathbb{R}^p\). Writing \(\rho(M, N)_i = \rho_i\) there then exist real-valued vectors \(v_1, ..., v_p\) satisfying \(Mv_i = \rho_i Nv_i\) and \(v_i^T N v_i=\delta_{ij}\) for \(i,j=1,..., p\). We call \(v_i\) the generalized eigenvector corresponding to \(\rho_i\). The generalized eigenvectors are not unique, but we can define \(P_i(M, N):=\sum_{j:\rho_i=\rho_j}v_iv_i^T\) for \(i=1,...,p\) which is then uniquely determined by \(M, N\). Note that for \(\rho_i = \rho_j\) we will also have \(P_i = P_j\). If we denote by \(S\) the space of real symmetric \(p\times p\) matrices, we are led to define the maps from \(S\times S\) to \(\mathbb{R}^{p\times p}\) given by \(P_i\) for \(i=1,..., p\). The following two lemmas concern the smoothness of these maps.

\begin{lemma} \label{lemma: eig_vec}
Let \(S_+\) denote the space of real positive definite \(p\times p\) matrices. \(P_i\) is continuous at points \((M, N)\in S\times S_+\). This means that if \((M_n, N_n)\in \mathbb{R}^{p\times p}\times \mathbb{R}^{p\times p}\) with \(||M_n - M||_F + ||N_n - N||_F\rightarrow 0\) for \(n\rightarrow \infty\), then
\[
||P_i(M_n, N_n) - P_i(M, N)||_F \rightarrow 0.
\]
\end{lemma}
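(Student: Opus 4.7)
My plan is to reduce the generalized eigenvalue problem for $(M,N)$ to a standard symmetric eigenvalue problem via the symmetric square root of $N$, and then invoke standard perturbation theory for spectral projectors. For concreteness I focus on the case where $\rho_i$ is a simple eigenvalue, which is the situation used in the main text under Assumption \ref{as: eig} (or the weaker condition $\lambda_m > \lambda_{m+1}$ discussed after it); the general case requires interpreting $P_i$ as a cluster projector, as explained below.

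First, since $N \in S_+$, the positive definite square root $N^{1/2}$ is well-defined, and the maps $N \mapsto N^{\pm 1/2}$ are continuous on $S_+$ (this follows from the continuous functional calculus, or from the spectral decomposition combined with Lemma \ref{lemma: eig_val}). Setting $\tilde M := N^{-1/2} M N^{-1/2}$, which is real symmetric, the generalized equation $Mv = \rho N v$ becomes $\tilde M w = \rho w$ with $w = N^{1/2} v$, and the normalization $v_i^T N v_j = \delta_{ij}$ becomes Euclidean orthonormality $w_i^T w_j = \delta_{ij}$. Consequently
\[
P_i(M,N) = \sum_{j:\rho_j = \rho_i} v_j v_j^T = N^{-1/2}\,\tilde P_i(\tilde M)\, N^{-1/2},
\]
where $\tilde P_i(\tilde M) = \sum_{j:\rho_j=\rho_i} w_j w_j^T$ is the orthogonal projector onto the eigenspace of $\tilde M$ for $\rho_i$. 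Since $N \mapsto N^{-1/2}$ is continuous and matrix multiplication is jointly continuous, it suffices to prove continuity of $\tilde M \mapsto \tilde P_i(\tilde M)$ at the relevant point.

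For this I would invoke the Riesz projection formula. Lemma \ref{lemma: eig_val} guarantees that the eigenvalues of $\tilde M'$ converge to those of $\tilde M$ as $\tilde M' \to \tilde M$, so I can choose a small positively oriented Jordan contour $\Gamma \subset \mathbb{C}$ enclosing $\rho_i$ and no other eigenvalue of $\tilde M$. For $\tilde M'$ sufficiently close to $\tilde M$, the contour $\Gamma$ still encloses only eigenvalues of $\tilde M'$ near $\rho_i$, and
\[
\tilde P_i(\tilde M') = \frac{1}{2\pi i}\oint_\Gamma (zI_p - \tilde M')^{-1}\, dz
\]
gives the sum of spectral projectors of $\tilde M'$ over the cluster of eigenvalues near $\rho_i$. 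Continuity of the resolvent $(zI_p - \tilde M')^{-1}$ in $\tilde M'$, uniform for $z\in\Gamma$ (from a standard Neumann series bound since $\Gamma$ stays a positive distance from the spectrum), then lets me interchange the limit and the contour integral to conclude.

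The main obstacle is the treatment of eigenvalue multiplicity: individual eigenvectors of a multiple eigenvalue are not continuous functions of the matrix, since arbitrarily small perturbations can split the eigenspace in arbitrary orthonormal directions. The Riesz formula sidesteps this by delivering the full cluster projector in one stroke. When $\rho_i$ is simple, which is the case needed in the paper, the cluster near $\rho_i$ remains a single simple eigenvalue under small perturbations, so $\tilde P_i$ is rank one throughout a neighbourhood and the Riesz projector agrees with $\tilde P_i(\tilde M')$ as originally defined.
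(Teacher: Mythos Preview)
Your proposal is correct and follows essentially the same approach as the paper: both reduce the generalized problem to the ordinary symmetric eigenvalue problem via $\tilde M = N^{-1/2} M N^{-1/2}$ and $w = N^{1/2} v$, and then invoke perturbation theory for spectral projectors of symmetric matrices. The paper simply cites Theorems 2.23 and 3.16 in Kato for the last step, whereas you spell out the Riesz contour integral that underlies those results; you are also a bit more explicit than the paper about the multiplicity caveat, correctly noting that only the simple-eigenvalue (or separated-cluster) case is actually used downstream.
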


\begin{proof}
We may assume for \(n\) large enough that \(N_n\) is positive definite since it converges towards a positive definite matrix. Define \(S = N^{-\frac{1}{2}}M N^{-\frac{1}{2}}\) and \(S_n = N_n^{-\frac{1}{2}}M_n N_n^{-\frac{1}{2}}\). Clearly the eigenvalues of \(S\) are \(\rho_1,..., \rho_p\) with the corresponding orthonormal eigenvectors given by \(\tilde{v}_i = N^{\frac{1}{2}}v_i\) for \(i=1, ..., p\) and similarly for \(S_n\). The result then follows from Theorem 2.23 and 3.16 in~\cite{kato2013perturbation}.
\end{proof}

\begin{lemma} \label{lemma: eig_diff}
Assume \((M, N)\in S \times S_+\) with simple eigenvalues, i.e., \(\rho_1 >...> \rho_p\). Then \(P_i\) and \(\rho_i\) are continously differentiable at \((M, N)\). Furthermore, the differential of \(P_i\) is given by
\[
dP_i = - P_i(dN)P_i - (M - \rho_i N)^+(dM - \rho_i dN)P_i - P_i(dM - \rho_i dN)(M - \rho_i)^+
\]
where \((M - \rho_i N)^+ = \sum_{j:\rho_j\neq\rho_i}(\rho_j - \rho_i)^{-1}P_j\).
\end{lemma}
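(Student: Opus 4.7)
The plan is to reduce the generalized eigenvalue problem to the ordinary symmetric one, invoke classical analytic perturbation theory for differentiability, and then derive the explicit formula for $dP_i$ by implicitly differentiating the generalized eigen-relations.

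First I would establish differentiability. Since $N \in S_+$, the map $N \mapsto N^{-1/2}$ is $C^\infty$ on a neighbourhood of $N$, so $S(M, N) := N^{-1/2} M N^{-1/2}$ is symmetric and depends smoothly on $(M, N)$. The generalized eigenvalues of $(M, N)$ coincide with the ordinary eigenvalues of $S$, and $N$-orthonormal generalized eigenvectors are given by $v_i = N^{-1/2}\tilde{v}_i$, where the $\tilde{v}_i$ are orthonormal eigenvectors of $S$. Under the simple-spectrum assumption $\rho_1 > \cdots > \rho_p$, standard Rellich/Kato analytic perturbation theory (see, e.g., Sections II.5--II.6 of \cite{kato2013perturbation}) gives that the eigenvalues and rank-one spectral projections $\tilde{P}_i = \tilde{v}_i\tilde{v}_i^T$ are real-analytic, and hence $C^\infty$, functions of the entries of $S$ near $(M,N)$, and locally one may even choose smooth unit eigenvectors $\tilde{v}_i$. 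Composing with the smooth map $S(\cdot,\cdot)$ shows that $\rho_i$ is $C^1$ in $(M, N)$ and $P_i = v_i v_i^T = N^{-1/2}\tilde{P}_i N^{-1/2}$ is $C^1$ as well.

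Next, with a local $C^1$ choice of $v_i$ satisfying $M v_i = \rho_i N v_i$ and $v_i^T N v_i = 1$, I would compute $dP_i$ by implicit differentiation. Differentiating the eigen-equation gives
\begin{equation*}
(M - \rho_i N)(dv_i) = (d\rho_i)\, N v_i + \rho_i\, (dN)\, v_i - (dM)\, v_i.
\end{equation*}
Left-multiplying by $v_j^T$ for $j \neq i$ and using $v_j^T(M - \rho_i N) = (\rho_j - \rho_i) v_j^T N$ together with $v_j^T N v_i = 0$ yields the off-diagonal coefficients
\begin{equation*}
v_j^T N(dv_i) = -(\rho_j - \rho_i)^{-1}\, v_j^T(dM - \rho_i\, dN)\, v_i,
\end{equation*}
while the diagonal coefficient comes from differentiating $v_i^T N v_i = 1$, giving $v_i^T N(dv_i) = -\tfrac{1}{2} v_i^T(dN) v_i$. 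Expanding $dv_i = \sum_j (v_j^T N(dv_i))\, v_j$ in the $N$-orthonormal basis $\{v_j\}$ and recognising $\sum_{j\neq i}(\rho_j - \rho_i)^{-1} v_j v_j^T = (M - \rho_i N)^+$, I obtain
\begin{equation*}
dv_i = -\tfrac{1}{2}\bigl(v_i^T(dN) v_i\bigr)\, v_i - (M - \rho_i N)^+(dM - \rho_i\, dN)\, v_i.
\end{equation*}

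Finally, $dP_i = (dv_i) v_i^T + v_i (dv_i)^T$ assembles the answer: the two scalar contributions combine as $-(v_i^T(dN)v_i)\, v_i v_i^T = -P_i(dN)P_i$, while the pseudoinverse pieces yield $-(M - \rho_i N)^+(dM - \rho_i\, dN) P_i$ and $-P_i(dM - \rho_i\, dN)(M - \rho_i N)^+$, matching the claimed formula. The main obstacle is bookkeeping: one must verify that the off-diagonal coefficients, computed one index at a time, reassemble into the compact pseudoinverse form, and check that the sign ambiguity $v_i \mapsto -v_i$ is irrelevant since $P_i$ and the right-hand side of the formula are both invariant under it. The differentiability claim itself is essentially a citation of the standard perturbation theory for simple eigenvalues of symmetric pencils.
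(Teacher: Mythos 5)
Your proof is correct, and the differential formula you arrive at checks out term by term. The differentiability part is essentially the paper's: both arguments conjugate by $N^{-1/2}$ to reduce the symmetric pencil to an ordinary symmetric eigenproblem and then cite standard perturbation theory (the paper invokes Theorem 8.9 of Magnus--Neudecker rather than Kato/Rellich, but the substance is identical). Where you genuinely diverge is in deriving the formula for $dP_i$: you differentiate the eigenvector relations $Mv_i=\rho_i Nv_i$ and $v_i^TNv_i=1$, expand $dv_i$ in the $N$-orthonormal basis, and assemble $dP_i=(dv_i)v_i^T+v_i(dv_i)^T$, whereas the paper never touches individual eigenvectors: it differentiates the projection identities $MP_i=\rho_i NP_i$ and $P_iNP_i=P_i$ directly, multiplies by $(M-\rho_iN)^+$ on either side to obtain $(I_p-P_iN)dP_i$ and $(dP_i)(I_p-NP_i)$, and combines these with the differentiated normalization to solve for $dP_i$. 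The paper's projection-based route buys you freedom from having to justify a locally smooth (sign-consistent) eigenvector selection, and it extends more gracefully to the case of repeated eigenvalues where $P_i$ is well defined but $v_i$ is not; your route is more elementary and makes the origin of each term transparent, at the modest cost of the eigenvector-selection and sign-invariance bookkeeping, which you correctly flag and which is indeed harmless here because $\rho_1>\cdots>\rho_p$ makes the rank-one projections analytic via a contour-integral representation of the conjugated problem.
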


\begin{proof}
The first statement follows directly by Theorem 8.9 in \cite{magnus2019matrix} after transforming the problem as above. To find the expression for the differential, we start with the defining equations
\[
MP_i = \rho_i NP_i, \quad \quad P_i N P_i = P_i.
 \]
The first equation yields \((M - \rho_i N)dP_i = -(dM - \rho_i dN)P_i + (d\rho_i)NP_i\). Note that \(P_jNP_i = \delta_{ij}P_i\) and that \(M-\rho_iN = (\sum_{j=1}^p (\rho_j - \rho_i) N P_j)N\). Multiplying the above differential equation by \((M - \rho_iN)^+\) on each side therefore yields
\begin{align}
    (I_p - P_i N)dP_i &= - (M - \rho_i N)^+(dM - \rho_i dN)P_i, \label{eq: diff_1}\\
    (dP_i)(I_p - NP_i) &= - P_i(dM - \rho_i dN)(M - \rho_i N)^+. \label{eq: diff_2}
\end{align}
Now after introducing differentials into the equation \(P_iNP_i=P_i\) we arrive at \((I_p - P_iN)dP_i + (dP_i)(I_p - NP_i) = dP_i + P_i(dN)P_i\). Plugging this into (\ref{eq: diff_1}) + (\ref{eq: diff_2}) gives us exactly the equation stated in the Lemma.
\end{proof}

It is not too hard to show that 
\[
\sum_{i=1}^p(M - \rho_i N)^+(dM - \rho_i dN)P_i = - \sum_{i=1}^p P_i(dM - \rho_i dN)(M - \rho_i N)^+
\]
and from the previous Lemma we then obtain for \(P = \sum_{i=1}^p P_i\) that 
\[
dP = -\sum_{i=1}^p P_i(dN)P_i.
\] 
But we also have that \(P= N^{-1}\) from which it follows that \(dP = - P(dN)P\) and thus \(-\sum_{i=1}^p P_i(dN)P_i= -P(dN)P\) and so the differential of \(P\) simplifies significantly.

\section{Simulation study}
\label{app: sim}
Here we describe in detail the simulation experiments from Section \ref{sec: simul}. The simulations were run in Python 3.9.1 and the code can be found on GitHub.\footnote{\url{https://github.com/cholberg/coint_CLT}}

\subsection{Comparison of Estimators} \label{app: sim_est}

We consider an array of parameters \(\Gamma_c\in\mathbb{R}^{p\times p}\) for \(p=3, 6, 9\) and \(c\in\{0, 0.2, 0.4,..., 29.8, 30\}\). For each \(c\) and \(p\), we let \(\Gamma_c\) be the diagonal matrix given by
\[
\left(\Gamma_c\right)_{ii} = \begin{cases}
    -1.5\quad &\text{if } i\le p/3, \\
    -c/T\quad &\text{if } p/3 < i \le 2p/3 \\
    0\quad &\text{otherwise. }
\end{cases}
\]
Throughout we fix \(T=100\). For each \(\Gamma_c\) we draw \(X_1,..., X_T\) and \(X_{T+1}\) and compute the MSPE across 4 million simulations. The results are given in Figure \ref{fig: mse_3_jag}.

\begin{figure}
    \centering
    \includegraphics[width=1\textwidth]{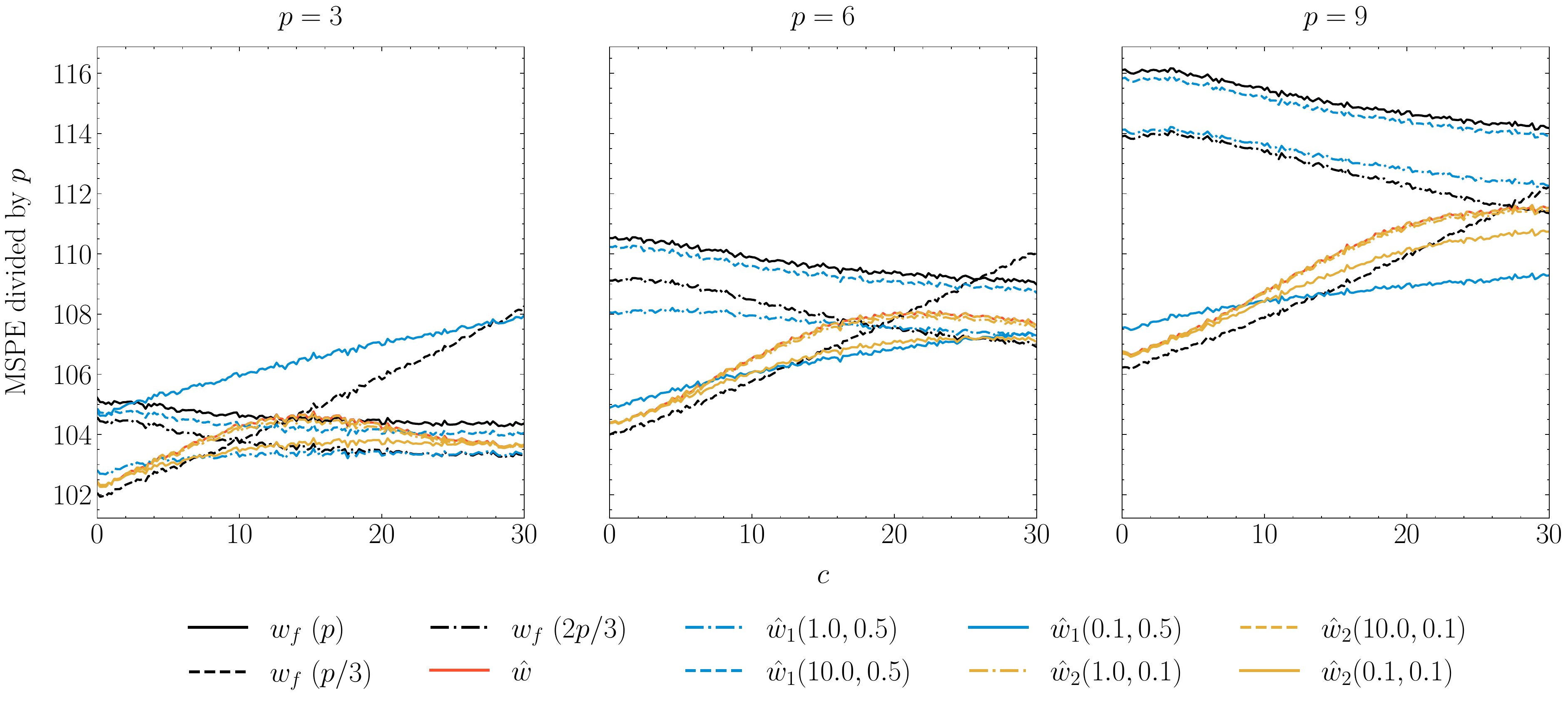}
    \caption{\small Mean square prediction error (MSPE) of different weighted reduced rank estimators for varying dimensions and \(c\in[0, 30]\) where the underlying autoregressive matrix, \(\Gamma_c\), has a third of its eigenvalues set to \(-c/T\), a third set to 0 and a third set to \(-3/2\). Sample size is fixed at \(T=100\).}
    \label{fig: mse_3_jag}
\end{figure}

To gain some insight we also plotted the mean and standard deviation of the individual weights across all simulations for \(p=3\) (see Figures \ref{fig: w_mean_3} and \ref{fig: w_std_3}). Clearly, choosing smoother weights significantly reduces the variance of the weights for eigenvectors where the corresponding eigenvalue is small. This is particularly apparent for \(\hat{w}_2(0.1, 0.1)\). While on average the weight is similar to \(\hat{w}\), it is not as steep for increasing \(c\) and its variance is significantly lower.

\begin{figure}
    \centering
    \includegraphics[width=1\textwidth]{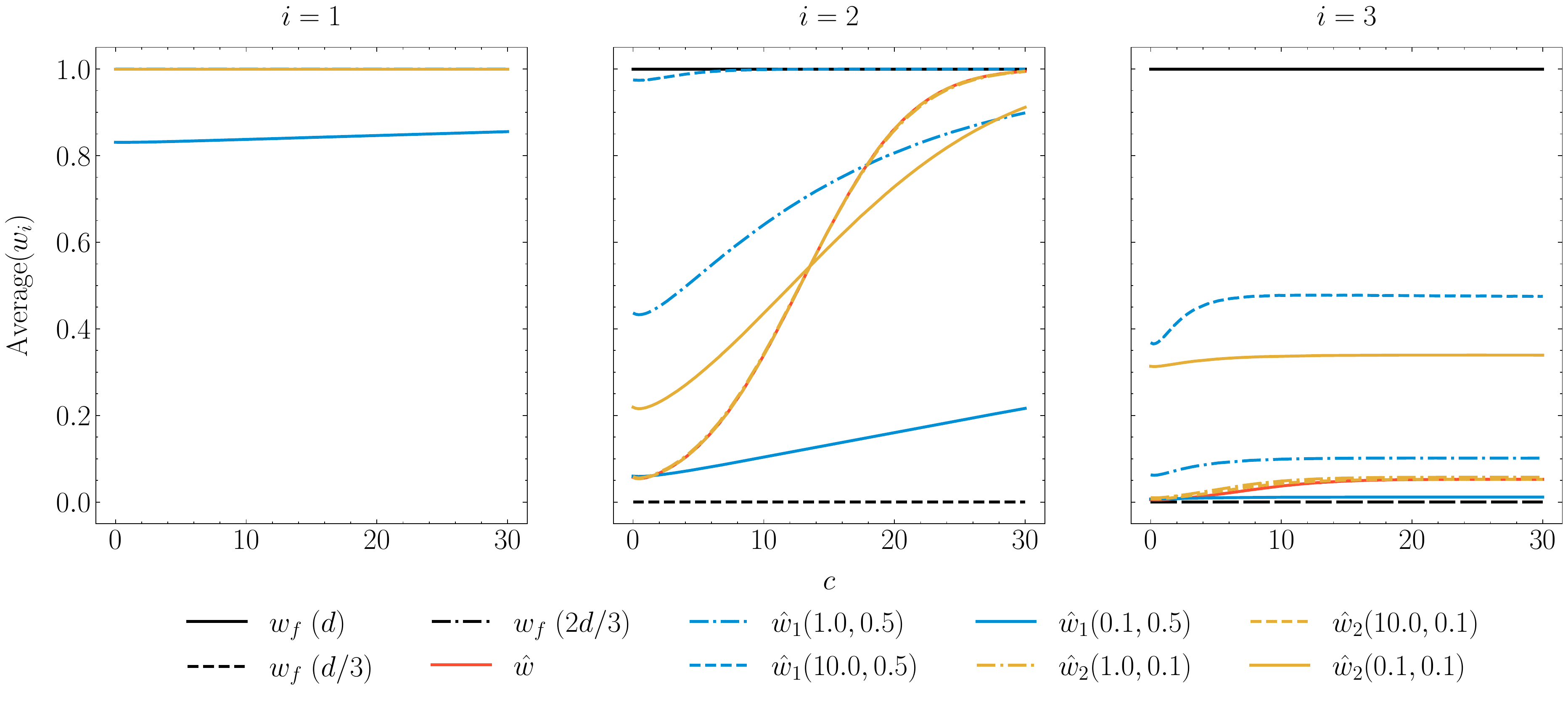}
    \caption{\small Mean of weights across 4 million simulations for \(d=3\) and \(c\in[0, 1]\) where the underlying autoregressive matrix, \(\Gamma_c\), has a third of its eigenvalues set to \(-c/T\), a third set to 0 and a third set to \(-3/2\). Sample size is fixed at \(T=100\).}
    \label{fig: w_mean_3}
\end{figure}
\begin{figure}
    \centering
    \includegraphics[width=1\textwidth]{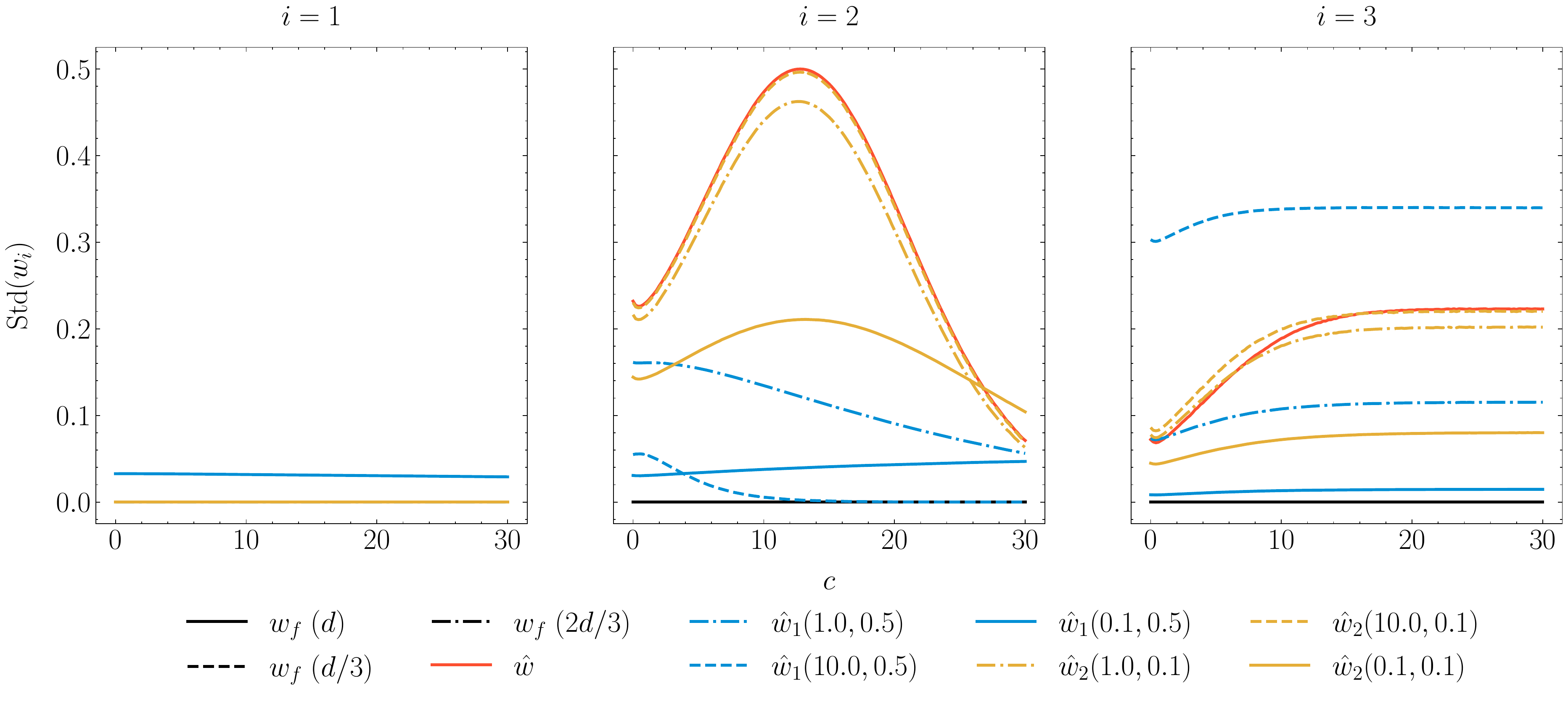}
    \caption{\small Standard deviation of weights across 4 million simulations for \(d=3\) and \(c\in[0, 1]\) where the underlying autoregressive matrix, \(\Gamma_c\), has a third of its eigenvalues set to \(-c/T\), a third set to 0 and a third set to \(-3/2\). Sample size is fixed at \(T=100\).}
    \label{fig: w_std_3}
\end{figure}

\subsection{Comparison of Distributions}
We generated 1000 i.i.d. samples of length \(T=5000\) of the process \(Y_t\in\mathbb{R}^4\) given by (\ref{eq: vecm}). For each sample, the errors are i.i.d. \(Z_t\sim\mathcal{N}(0, \Sigma_Z)\) where \(\Sigma_Z\) is a random positive definite matrix that is given by
\[
I_4 + \frac{1}{2}U U^T
\]
with \(U_{ij}\) i.i.d. uniformly over \([0, 1]\) for \(i,j=1,...,4\). The matrix \(\Pi\in\mathbb{R}^{4\times 4}\) is of rank 2 and can be decomposed as \(\Pi = \alpha\beta^T\) where 
\[
\alpha=\begin{pmatrix}-0.7 & 0 \\ 0 & -0.7 \\ 0 & 0 \\ 0 & 0\end{pmatrix}, \quad
\beta = \begin{pmatrix}1 & 0 \\ -1 & 1 \\ 0 & -1 \\ 0 & 0 \end{pmatrix}.
\]
This ensures that Assumptions \ref{as: roots}, \ref{as: orth}, and \ref{as: eig} are fulfilled with probability 1 so that the process is \(I(1)\) and cointegrated. The cointegration rank is equal to the rank of \(\Pi\) and the cointegration relations are given by the columns of \(\beta\). Each sample is \(Q\)-transformed to get \(X_t\) as in (\ref{eq: qvecm}) and we computed the estimators \(\hat{\Gamma}_1\), \(\hat{\Gamma}_2\) and \(\hat{\Gamma}_4\) to obtain the empirical large-sample distribution in the three different cases.

The asymptotic densities were obtained by generating 1000 samples from (\ref{eq: asym_true}), (\ref{eq: asym_over}), and (\ref{eq: asym_under}) with the parameters as given above.

\subsection{Rank selection vs. Bias} \label{app: rank_bias}

We now turn to the relation between the $r$ non-zero eigenvalues in (\ref{eq: population_eig}) and rank-selection. We consider a high-dimensional process \(Y_t\in\mathbb{R}^{40}\) generated by (\ref{eq: vecm}) under different parameter settings. The parameters are chosen in such a way that \(||\Pi||_F\) remains fixed in all settings with cointegration rank \(r=20\). However, the sequence of eigenvalues changes. For each setting, we consider different values of \(\lambda_{min}\) and \(\lambda_{max}\) such that \(\lambda_{min} < \lambda_{max}\) correspond to the smallest and largest squared eigenvalue, respectively. In order to keep \(||\Pi||_F\) fixed, an increase in \(\lambda_{min}\) leads to a decrease in \(\lambda_{max}\) so that it suffices to only specify the value of \(\lambda_{min}\). Smaller values of \(\lambda_{min}\) represent cases in which there are many small eigenvalues so that we would expect the cointegration test to be more prone to underestimate the true rank. To be precise for a given choice of \(\lambda_{min} \in \{0.01, 0.03, 0.1, 0.3\}\) the samples are generated as follows: For each sample the errors are i.i.d. \(Z_t\sim \mathcal{N}(0, I_{40})\). We fix
\[
\beta = \begin{pmatrix}
    I_{20} \\
    0_{20\times 20}
\end{pmatrix}
\]
and let \(\lambda_{max} = 0.81 - \lambda_{min}\). This choice may seem arbitrary, but it basically ensures that the process is non-explosive and that the norm of \(\Pi\) does not depend on \(\lambda_{min}\) as argued below.  Now define
\[
\lambda_k = \lambda_{min} + \frac{(\lambda_{max} - \lambda_{min}) (k - 1)}{19}, \quad \text{for } k = 1,..., 20.
\]
Setting \(D = \text{diag}(\sqrt{\lambda_1},..., \sqrt{\lambda_{20}})\) and letting
\[
\alpha = \begin{pmatrix}
    -2 D \\
    0_{20\times 20}
\end{pmatrix}
\]
then ensures that Assumptions \ref{as: roots}, \ref{as: orth}, and \ref{as: eig} are fulfilled with cointegration rank 20 and the non-zero eigenvalues defined in \eqref{eq: population_eig} being exactly \(\sqrt{\lambda_1},..., \sqrt{\lambda_{20}}\). Here, of course, \(\Pi = \alpha\beta^T\) and \(\Sigma_Z = I_{40}\). Observe that the process \(Y_t\) is already split up into its stationary and random walk part. By construction we also have that 
\[
||\Pi||_F^2 = \sum_{i=k}^{20}\lambda_k = 20\lambda_{min} +\frac{\lambda_{max}-\lambda_{min}}{19}\sum_{k=1}^{20}(k-1)=10(\lambda_{min}+\lambda_{max})=8.1
\]
so that \(||\Pi||_F\) does not depend on our choice of \(\lambda_{min}\).

Since the asymptotic distributions of the test statistics described in Section \ref{sec: rank} are non-standard and non-applicable in dimensions that much exceed \(p=12\) we will instead use a bootstrap approach as described in \cite{cavaliere2015bootstrap} to estimate the rank. In all simulations we used \(B=299\) bootstrap samples for each test.

In Fig. \ref{fig: rank_sel_high} we estimate the probability of choosing a given rank for different values of \(\lambda_{min}\). For each choice, we simulate \(200\) samples of \(Y_t\) of length \(T=200\) and estimate the rank using the sequential testing approach described above. We used the trace statistic and bootstrap to determine the asymptotic distribution under each hypothesis. In line with our expectations, the rank tends to be underestimated for smaller values of \(\lambda_{min}\). It appears that an increase in \(\lambda_{min}\) causes a shift towards the true rank in the distribution of estimated ranks. 

\begin{figure}
    \centering
    \includegraphics[width=1\textwidth]{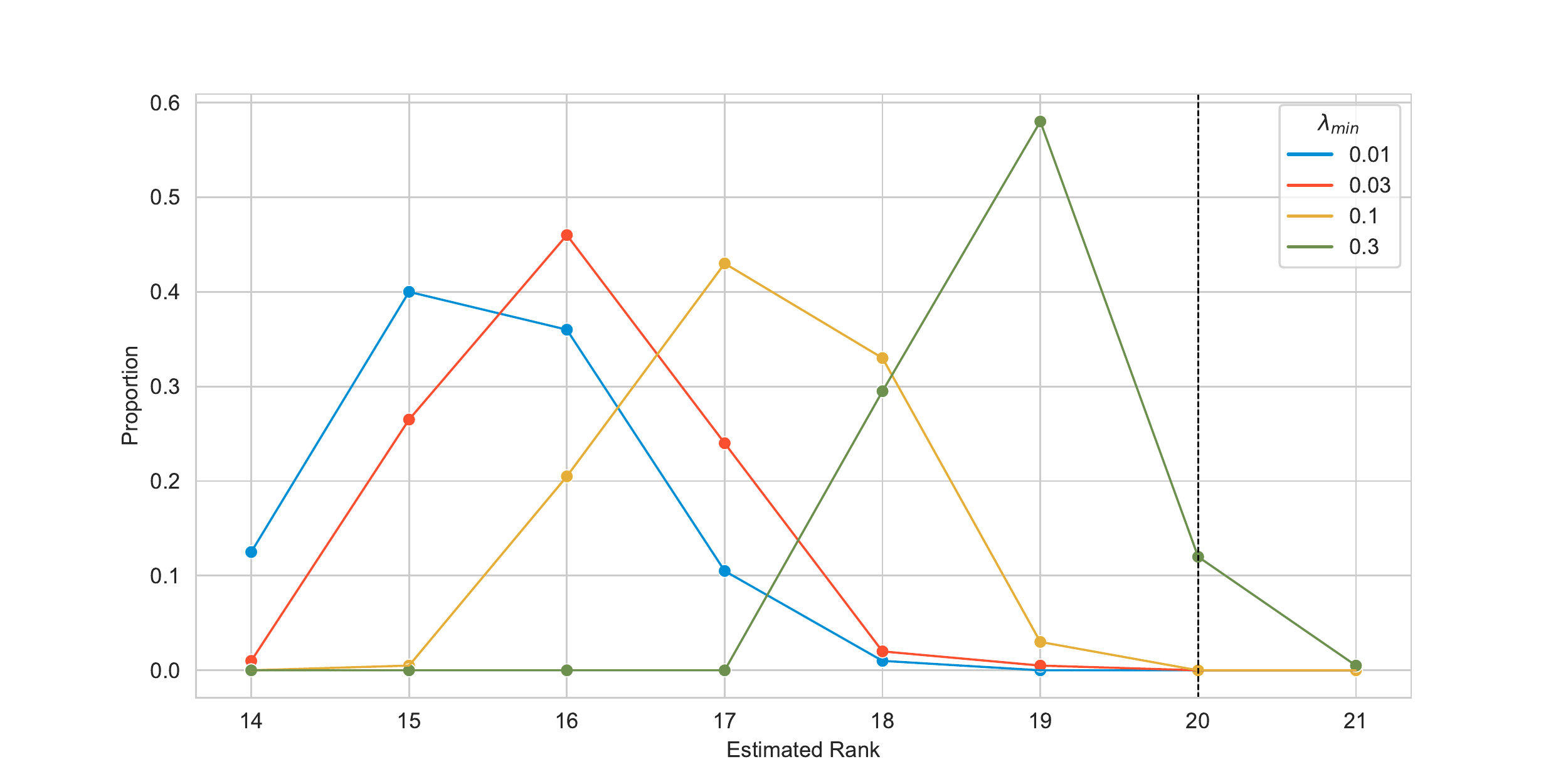}
    \caption{\small Distribution of the estimated rank under different parameter settings. The results are based on \(200\) simulations of \((Y_0,..., Y_{200})\). The vertical line at 20 illustrates the true rank of \(\Pi\). The dimension is $p=40$.}
    \label{fig: rank_sel_high}
\end{figure}

When studying the asymptotic bias in the different cases, we see an adverse effect. For smaller values of \(\lambda_{min}\), the bias tends to be very small as long as we only underestimate the true rank by a little. This is illustrated in Fig. \ref{fig: rank_bias_high}. Choosing \(k=14\) in the case where \(\lambda_{min}=0.01\) leads to approximately the same asymptotic bias as choosing \(k=18\) in the case where \(\lambda_{min}=0.3\) even though in the former case we are quite far off from the true rank.

\begin{figure}
    \centering
    \includegraphics[width=.9\textwidth]{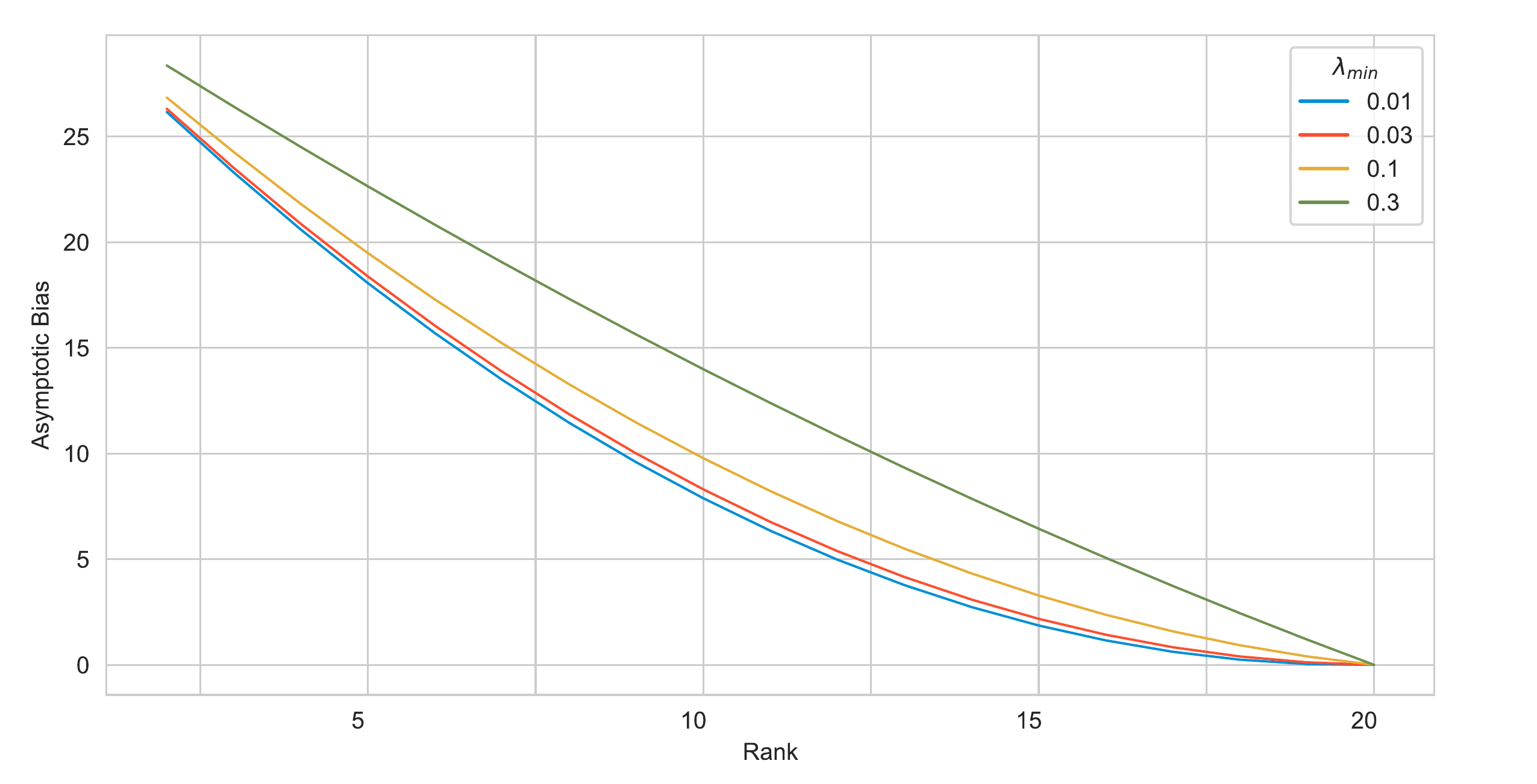}
    \caption{\small Asymptotic bias of \(\hat{\Pi}_k\) for different values of \(k\in\{2,..., 20\}\) and choices of \(\lambda_{min}\in\{0.01, 0.03, 0.1, 0.3\}\). We plot here the Frobenius norm of the bias \(b\) defined in \eqref{eq: b}.} 
    \label{fig: rank_bias_high}
\end{figure}

\end{appendix}

\bibliographystyle{plainnat}
\bibliography{bibliography}

\begin{thebibliography}{24}
\providecommand{\natexlab}[1]{#1}
\providecommand{\url}[1]{\texttt{#1}}
\expandafter\ifx\csname urlstyle\endcsname\relax
  \providecommand{\doi}[1]{doi: #1}\else
  \providecommand{\doi}{doi: \begingroup \urlstyle{rm}\Url}\fi

\bibitem[Anderson(2002{\natexlab{a}})]{anderson2002reduced}
Theodore~W Anderson.
\newblock Reduced rank regression in cointegrated models.
\newblock \emph{Journal of Econometrics}, 106\penalty0 (2):\penalty0 203--216,
  2002{\natexlab{a}}.

\bibitem[Anderson(1999)]{anderson1999asymptotic}
Theodore~Wilbur Anderson.
\newblock Asymptotic distribution of the reduced rank regression estimator
  under general conditions.
\newblock \emph{The Annals of Statistics}, 27\penalty0 (4):\penalty0
  1141--1154, 1999.

\bibitem[Anderson(2002{\natexlab{b}})]{anderson2002specification}
TW~Anderson.
\newblock Specification and misspecification in reduced rank regression.
\newblock \emph{Sankhy{\=a}: The Indian Journal of Statistics, Series A}, pages
  193--205, 2002{\natexlab{b}}.

\bibitem[Cavaliere et~al.(2015)Cavaliere, Rahbek, and
  Robert~Taylor]{cavaliere2015bootstrap}
Giuseppe Cavaliere, Anders Rahbek, and AM~Robert~Taylor.
\newblock Bootstrap determination of the co-integration rank in var models with
  unrestricted deterministic components.
\newblock \emph{Journal of Time Series Analysis}, 36\penalty0 (3):\penalty0
  272--289, 2015.

\bibitem[Elliott(1998)]{elliott1998robustness}
Graham Elliott.
\newblock On the robustness of cointegration methods when regressors almost
  have unit roots.
\newblock \emph{Econometrica}, 66\penalty0 (1):\penalty0 149--158, 1998.

\bibitem[Engle and Granger(1987)]{englegranger1987}
RF~Engle and CWJ Granger.
\newblock Cointegration and error correction - representation, estimation, and
  testing.
\newblock \emph{Econometrica}, 55\penalty0 (2):\penalty0 251--276, 3 1987.
\newblock ISSN 0012-9682.
\newblock \doi{10.2307/1913236}.

\bibitem[Hansen(2010)]{hansen2010averaging}
Bruce~E Hansen.
\newblock Averaging estimators for autoregressions with a near unit root.
\newblock \emph{Journal of Econometrics}, 158\penalty0 (1):\penalty0 142--155,
  2010.

\bibitem[Holberg and Ditlevsen(2023)]{holberg2023uniform}
Christian Holberg and Susanne Ditlevsen.
\newblock Uniform inference for cointegrated vector autoregressive processes.
\newblock \emph{arXiv preprint arXiv:2306.03632}, 2023.

\bibitem[Izenman(1975)]{izenman1975reduced}
Alan~Julian Izenman.
\newblock Reduced-rank regression for the multivariate linear model.
\newblock \emph{Journal of multivariate analysis}, 5\penalty0 (2):\penalty0
  248--264, 1975.

\bibitem[Johansen(1988)]{johansen1988statistical}
S{\o}ren Johansen.
\newblock Statistical analysis of cointegration vectors.
\newblock \emph{Journal of economic dynamics and control}, 12\penalty0
  (2-3):\penalty0 231--254, 1988.

\bibitem[Johansen et~al.(1995)]{johansen1995likelihood}
S{\o}ren Johansen et~al.
\newblock \emph{Likelihood-based inference in cointegrated vector
  autoregressive models}.
\newblock Oxford University Press on Demand, 1995.

\bibitem[Kato(2013)]{kato2013perturbation}
Tosio Kato.
\newblock \emph{Perturbation theory for linear operators}, volume 132.
\newblock Springer Science \& Business Media, 2013.

\bibitem[Koop et~al.(2006)Koop, Strachan, Van~Dijk, and
  Villani]{koop2006bayesian}
Gary Koop, Rodney Strachan, Herman Van~Dijk, and Mattias Villani.
\newblock Bayesian approaches to cointegration.
\newblock \emph{Research Papers in Economics}, pages 871--898, 2006.

\bibitem[Levakova and Ditlevsen(2023)]{levakova2023}
Marie Levakova and Susanne Ditlevsen.
\newblock Penalization methods in fitting high-dimensional cointegrated {VAR}
  models: a review.
\newblock \emph{International Statistical Review}, page 220621, 2023.
\newblock To appear.

\bibitem[Levakova et~al.(2022)Levakova, Christensen, and
  Ditlevsen]{levakova2022classification}
Marie Levakova, Jeppe~H{\o}y Christensen, and Susanne Ditlevsen.
\newblock Classification of brain states that predicts future performance in
  visual tasks based on co-integration analysis of eeg data.
\newblock \emph{Royal Society Open Science}, 9\penalty0 (11):\penalty0 220621,
  2022.

\bibitem[Lieb and Smeekes(2017)]{lieb2017inference}
Lenard Lieb and Stephan Smeekes.
\newblock Inference for impulse responses under model uncertainty.
\newblock \emph{arXiv preprint arXiv:1709.09583}, 2017.

\bibitem[L{\"u}tkepohl(2005)]{lutkepohl2005new}
Helmut L{\"u}tkepohl.
\newblock \emph{New introduction to multiple time series analysis}.
\newblock Springer Science \& Business Media, 2005.

\bibitem[Magnus and Neudecker(1979)]{magnus1979commutation}
Jan~R Magnus and Heinz Neudecker.
\newblock The commutation matrix: some properties and applications.
\newblock \emph{The Annals of Statistics}, 7\penalty0 (2):\penalty0 381--394,
  1979.

\bibitem[Magnus and Neudecker(2019)]{magnus2019matrix}
Jan~R Magnus and Heinz Neudecker.
\newblock \emph{Matrix differential calculus with applications in statistics
  and econometrics}.
\newblock John Wiley \& Sons, 2019.

\bibitem[Neudecker(1968)]{neudecker1968kronecker}
H~Neudecker.
\newblock The kronecker matrix product and some of its applications in
  econometrics.
\newblock \emph{Statistica Neerlandica}, 22\penalty0 (1):\penalty0 69--82,
  1968.

\bibitem[Onatski and Wang(2018)]{onatski2018alternative}
Alexei Onatski and Chen Wang.
\newblock Alternative asymptotics for cointegration tests in large vars.
\newblock \emph{Econometrica}, 86\penalty0 (4):\penalty0 1465--1478, 2018.

\bibitem[Roy(1989)]{roy1989asymptotic}
Roch Roy.
\newblock Asymptotic covariance structure of serial correlations in
  multivariate time series.
\newblock \emph{Biometrika}, 76\penalty0 (4):\penalty0 824--827, 1989.

\bibitem[St{\ae}rk-{\O}stergaard et~al.(2023)St{\ae}rk-{\O}stergaard, Rahbek,
  and Ditlevsen]{ostergaard2022}
Jacob St{\ae}rk-{\O}stergaard, Anders Rahbek, and Susanne Ditlevsen.
\newblock High-dimensional cointegration and {K}uramoto inspired systems.
\newblock \emph{SIAM Journal on Applied Dynamical Systems}, pages 1--20, 2023.
\newblock To appear.

\bibitem[Yu et~al.(2015)Yu, Wang, and Samworth]{yu2015useful}
Yi~Yu, Tengyao Wang, and Richard~J Samworth.
\newblock A useful variant of the davis--kahan theorem for statisticians.
\newblock \emph{Biometrika}, 102\penalty0 (2):\penalty0 315--323, 2015.

\end{thebibliography}

\end{document}